\theoremstyle{plain}
\newtheorem{thm}[equation]{Theorem}    \newtheorem{prop}[equation]{Proposition}
\newtheorem{lem}[equation]{Lemma}      \newtheorem{cor}[equation]{Corollary}
\theoremstyle{definition}
\newtheorem{dfn}[equation]{Definition} \newtheorem{exa}[equation]{Example}
\newtheorem{rem}[equation]{Remark}     \newtheorem{obs}[equation]{Observation}
\numberwithin{equation}{section}
\def\a{\alpha}          \def\b{\beta}           \def\g{\gamma}
\def\d{\delta}          \def\e{\epsilon}        \def\la{\lambda}
          \def\D{\Delta}
\def\C{\mathbb{C}}      \def\N{\mathbb{N}}
\def\R{\mathbb{R}}      
\def\rL{\mathrm{L}}     \def\rLS{\mathrm{LS}}   \def\rS{\mathrm{S}}
\def\cB{\mathcal{B}}    \def\cG{\mathcal{G}}    \def\cU{\mathcal{U}}
\def\bo{\bar{o}}        \def\bu{\bar{u}}        \def\bv{\bar{v}}
\def\bx{\bar{x}}        \def\by{\bar{y}}        \def\bz{\bar{z}}
\def\bB{\bar{B}}        \def\bT{\overline{T}\,} \def\tg{\tilde{\g}}
\def\mf{\phi}            \def\mm{\mu}           \def\mml{\mu'}
\def\me{\eta}                  \def\ms{\sigma}
\def\ii{i_{\mathrm{I}}}  \def\il{i_{\mathrm{L}}}\def\is{i_{\mathrm{S}}}
\def\Bq#1#2{\operatorname{\mathcal{#1B#2}}(X)}
\def\Bqo#1#2#3{\operatorname{\mathcal{#1B#2}}_{#3}(X)}
\def\BqO#1#2#3{\operatorname{\mathcal{#1B#2}}^{#3}(X)}
\def\nbq{\Bq{}{}}            \def\lbq{\Bq{L}{}}
\def\nbqs{\Bq{}{S}}          \def\lbqs{\Bq{L}{S}}
\def\lbqso#1{\Bqo{L}{S}{#1}} \def\gbq{\Bq{G}{}}
\def\Gbq#1{\BqO{G}{}{#1}}    \def\sbq{\mathcal{B}_{\text{std}}(X)}
\def\hrcat(#1){rCAT$(#1;*)$} 
\def\rcat(#1){rCAT$(#1)$}    
\def\ebd#1{\partial_{\mathrm{E}}#1}
\def\ibd#1{\partial_{\mathrm{I}}#1}
\def\Gbd#1{\partial_{\mathrm{G}}#1}
\def\bbd#1{\partial_{\mathrm{B}}#1}
\def\lbbd#1{\partial_{\mathrm{LB}}#1}
\def\bsbd#1{\partial_{\mathrm{BS}}#1}
\def\lbsbd#1{\partial_{\mathrm{LBS}}#1}
\def\bcl#1{\overline{#1}_\mathrm{B}}
\def\Gcl#1{\overline{#1}_\mathrm{G}}
\def\icl#1{\overline{#1}_\mathrm{I}}
\def\bt{\tau_{\mathrm{B}}} 
\def\ct{\tau_{\mathrm{C}}} 
\def\Gt{\tau_{\mathrm{G}}} 
\def\c{c}
\def\sym{{\sim}} 
\def\ds{\displaystyle}
\DeclareMathOperator{\len}{len}%
\DeclareMathOperator{\dist}{dist}%
\def\mino{\wedge}    \def\({\left(}
\def\maxo{\vee}      \def\){\right)}
\def\ip#1#2#3{\left<#1,#2;\,#3\right>}
\def\IP<#1,#2;#3>{\ip{#1}{#2}{#3}}
\begin{document}
\title{The Boundary at Infinity of a rough CAT(0) space}

\author{S.M. Buckley and K. Falk}%
\address{Department of Mathematics and Statistics, National University of
Ireland Maynooth, Maynooth, Co. Kildare, Ireland}%
\email{stephen.buckley@maths.nuim.ie}%
\address{Universit\"at Bremen, FB 3 - Mathematik, Bibliothekstra{\ss}e 1,
28359 Bremen, Germany}%
\email{khf@math.uni-bremen.de}%

\begin{abstract}
We develop the boundary theory of rough CAT(0) spaces, a class of length
spaces that contains both Gromov hyperbolic length spaces and CAT(0) spaces.
The resulting theory generalizes the common features of the Gromov boundary
of a Gromov hyperbolic length space and the ideal boundary of a complete
CAT(0) space. It is not assumed that the spaces are geodesic or proper.
\end{abstract}

\subjclass[2010]{Primary 51M05, 51M10. Secondary: 51F99}%
\keywords{CAT(0) space, Gromov hyperbolic space, rough CAT(0) space, ideal
boundary, Gromov boundary, bouquet boundary}
\thanks{The authors were partially supported by Science Foundation Ireland.
Both authors thank the University of Bremen and National University of
Ireland Maynooth for hospitality and financial support of reciprocal visits
that enabled this research effort.}

\maketitle

\section{Introduction}

The boundary theory of Gromov hyperbolic and complete CAT(0) spaces share
common features; by ``boundary'', we always mean some sort of boundary at
infinity. In particular if $X$ is both a Gromov hyperbolic space and a
complete CAT(0) space, then it is well known that its Gromov boundary $\Gbd
X$ and its ideal boundary $\ibd X$ can naturally be identified. Furthermore
under this identification, the canonical topology $\Gt$ generated by the
canonical gauge of metrics on the Gromov boundary equals the cone topology
$\ct$ on the ideal boundary. See Section~\ref{S:prelims} for relevant
definitions and references.

However it would be preferable to reconcile the common features of these two
theories inside a larger class rather than in the intersection of the two
classes. With a view to doing this, we defined a class of {\it rough CAT(0)
spaces} (abbreviated rCAT(0)) in \cite{BF1}, where we also investigated the
interior (i.e.~non-boundary) geometry of such spaces. This new class of
length spaces is arguably the smallest natural class of spaces that properly
contains all Gromov hyperbolic length spaces and all CAT(0) spaces; it is
not assumed that the spaces involved are geodesic, proper, or even complete.
Rough CAT(0) is closely related to the class of bolic spaces of Kasparov and
Skandalis \cite{KS1}, \cite{KS2} that was introduced in the context of their
work on the Baum-Connes and Novikov Conjectures, and is also related to
Gromov's class of CAT(-1,$\e$) spaces \cite{G}, \cite{DG}. They are closed
under reasonably general limit processes such as pointed and unpointed
Gromov-Hausdorff limits and ultralimits, and the rCAT(0) condition is
equivalent to a purely metric rough $n$-point condition for $n\ge 5$
\cite{BHn}.

Building on \cite{BF1}, we investigate the boundary theory of rCAT(0) spaces
in this paper. Unlike complete CAT(0) spaces, geodesic rays in an rCAT(0)
space do not form the basis of a nice boundary theory, and completeness is
not a useful assumption. Instead we replace geodesic rays by bouquets of
short paths whose lengths tend to infinity; one version of these bouquets is
closely related to the {\it roads} that V\"ais\"al\"a \cite{Va} introduced
in the context of Gromov hyperbolic spaces. We then define what we call the
{\it bouquet boundary} $\bbd X$ of $X$, and the associated bordification
$\bcl X:= X\cup \bbd X$. Moreover we define a {\it bouquet topology}
$\tau_B$ on $\bcl X$, denoting the corresponding subspace topology on $\bbd
X$ also by $\tau_B$. Similarly, we write $\icl X=X\cup\ibd X$ and $\Gcl
X=X\cup\Gbd X$ for the ideal and for the Gromov bordifications defined in
Section~\ref{S:prelims}.

The following pair of results show that the bouquet boundary with its
associated topology is indeed the desired type of generalization.

\begin{thm}\label{T:CAT0 main}
Suppose $X$ is a complete CAT(0) space. Then $\icl X$ equipped with the cone
topology and $\bcl X$ equipped with the bouquet topology are naturally
homeomorphic.
\end{thm}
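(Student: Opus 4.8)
The plan is to build an explicit natural bijection $\Phi\colon \icl X\to\bcl X$ that restricts to the identity on $X$, and then to show it is a homeomorphism by comparing neighborhood bases of the cone topology $\ct$ and the bouquet topology $\bt$. Throughout, the single analytic engine is the convexity of the CAT(0) metric: if $\ms,\ms'$ are unit-speed geodesics issuing from a common basepoint $o$, then $t\mapsto\dist(\ms(t),\ms'(t))$ is a convex function vanishing at $0$, so $\dist(\ms(t),\ms'(t))\le (t/L)\,\dist(\ms(L),\ms'(L))$ for $0\le t\le L$. Every quantitative comparison below is a consequence of this inequality together with completeness.

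To define $\Phi$, fix $o\in X$; on $X$ let $\Phi$ be the identity. Given $\xi\in\ibd X$, let $\g_\xi$ be the unique geodesic ray from $o$ in the class $\xi$ (existence and uniqueness use completeness and CAT(0)). In a CAT(0) space a short path with roughness constant $0$ is a genuine geodesic, so the initial segments $\{\g_\xi|_{[0,n]}\}_{n\in\N}$ form a bouquet, and I would set $\Phi(\xi)$ to be its bouquet class. Well-definedness is immediate because $\g_\xi$ is canonical.

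For bijectivity, injectivity follows from the convexity estimate: distinct ideal points $\xi\ne\xi'$ are represented by rays that are not asymptotic, so $\dist(\g_\xi(t),\g_{\xi'}(t))>0$ for some $t$ and hence tends to $\infty$, making the associated bouquets inequivalent. For surjectivity I would start from an arbitrary bouquet $\{\ms_n\}$ of geodesic segments from $o$ with $\len\ms_n\to\infty$; the bouquet coherence bounds the mutual fellow-traveling of the $\ms_n$, so the convexity estimate forces $\{\ms_n(t)\}_n$ to be Cauchy for each fixed $t$. By completeness these limits assemble into a geodesic ray $\g$, and the truncation bouquet of $\g$ is equivalent to $\{\ms_n\}$; thus $\Phi([\g])$ is the given class. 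Completeness is exactly what is needed to produce the limiting ray, which is why it is hypothesized.

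It remains to match the two topologies, which I expect to be the main obstacle. A basic cone neighborhood of $\xi$ consists of those $x$ (in $X$ or in $\ibd X$) whose geodesic from $o$ toward $x$ stays within $\e$ of $\g_\xi$ out to radius $r$; a basic bouquet neighborhood of $\Phi(\xi)$ is described, via the definition given earlier, by bouquet representatives that fellow-travel the reference bouquet out to a prescribed length within a prescribed tolerance. The convexity estimate makes these two fellow-traveling conditions interchangeable up to adjusting $(r,\e)$ against the length and tolerance parameters, so each basic neighborhood of one kind contains one of the other; this yields continuity of $\Phi$ and of $\Phi^{-1}$. The delicate points are, first, to translate the general bouquet-topology neighborhoods (built with a roughness constant and the coherence relation) into the purely geodesic language available when the roughness constant is $0$, and second, to avoid any appeal to sequences: since $X$ is not assumed proper, neither bordification need be first countable, so I would argue directly with neighborhood bases (equivalently, nets) rather than with sequential convergence.
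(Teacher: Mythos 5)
Your overall route is the same as the paper's: identify $\icl X$ with $\bcl X$ as sets via the unique geodesic rays from $o$ (the paper's Theorems~\ref{T:CAT0 bij} and \ref{T:geod ray}), then match the cone and bouquet neighborhood bases at ideal points using convexity. But there is a genuine gap, and it stems from your founding assumption that in a CAT(0) space the bouquet machinery can be read with ``roughness constant $0$,'' i.e.\ in purely geodesic terms. That is not how the objects are defined: in this framework a CAT(0) space is $C$-rCAT(0) only for $C=2+\sqrt{3}>0$ (Proposition~\ref{P:cat0 is rcat0}), bouquets consist of $D$-short paths with $D$ strictly positive (they are nowhere required to be geodesics), and the basic sets $S(x,r;n,t)$ of the bouquet topology are defined in terms of arbitrary short-path representatives of $x$ and $y$. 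Consequently your ``single analytic engine'' --- convexity of $t\mapsto d(\ms(t),\ms'(t))$ for two geodesics from $o$ --- cannot run your main steps as stated. In the surjectivity step, an arbitrary bouquet class need not come with a geodesic representative; replacing each $\b_n$ by the geodesic from $o$ to its tip, or running your Cauchy argument directly on the $\b_n$, requires knowing that short paths with small shortness parameter stay uniformly close to geodesics in a CAT(0) space (the last statement of Lemma~\ref{L:rough cx}, resting on \cite[Theorem~4.5]{BF1}); this is exactly the extra input the paper's Theorem~\ref{T:geod ray} uses, and it does not follow from geodesic-to-geodesic convexity. You do flag this translation as a ``delicate point,'' but the resolution you envisage --- that the roughness constant is $0$ --- is unavailable.

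The same issue is decisive in the topology comparison. Because two generalized-bouquet representatives of the same point of $\bcl X$ can be a bounded but nonzero distance apart (the paper's estimate \eqref{E:equiv est} gives the bound $5C+4$), a bouquet neighborhood $S(x,r;n,t)$ can only be squeezed into the enlarged cone neighborhood $U(x,r+10C+8,t)$; the additive constant $10C+8$ is irreducible, and since $C>0$ you cannot obtain arbitrarily fine cone neighborhoods this way, contrary to your claim that the two fellow-traveling conditions are ``interchangeable up to adjusting $(r,\e)$.'' The missing idea that closes the argument is the paper's final step: cone neighborhoods whose radius is bounded below (say $r'>10C+8$) still generate the cone topology, because of the CAT(0) rescaling containment $U(x,10C+9,t(10C+9)/r)\subset U(x,r,t)$ for $0<r<1$ --- here, at last, your convexity engine is exactly the right tool. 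Without the short-path-to-geodesic comparison and without this rescaling step, the proposal does not close. A minor further slip: your worry that the bordifications may fail to be first countable is unfounded, since Theorem~\ref{T:bases} shows $\bcl X$ always is; arguing with neighborhood bases rather than sequences is nonetheless harmless.
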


\begin{thm}\label{T:Gromov main}
Suppose $X$ is a $\d$-hyperbolic length space, $\d\ge0$. Then $\bcl X$
equipped with the bouquet topology and $\Gcl X$ equipped with the canonical
topology are naturally homeomorphic.
\end{thm}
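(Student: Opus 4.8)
The plan is to build a natural bijection $\Phi\colon\bcl X\to\Gcl X$ which restricts to the identity on $X$ and carries each bouquet boundary point to a point of $\Gbd X$, and then to show that $\Phi$ is a homeomorphism between $(\bcl X,\bt)$ and $(\Gcl X,\Gt)$. A bouquet $b$ is a coherent family of short paths issuing from the basepoint $o$ with lengths tending to infinity; let $(x_n)$ be the sequence of their endpoints. Since $X$ is $\d$-hyperbolic and each path is short -- its length exceeding the distance between its endpoints by at most a fixed additive constant -- these paths are uniformly roughly geodesic, and a thin-triangle estimate gives $\liminf_{m,n\to\infty}\ip{x_m}{x_n}{o}=\infty$. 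Thus $(x_n)$ is a Gromov sequence and determines $\Phi(b)\in\Gbd X$. Naturality (independence of the basepoint $o$) follows because a change of basepoint alters each Gromov product by a bounded amount and moves the short paths a bounded distance, so the same class of $\Gbd X$ results.

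To see that $\Phi$ is well defined and injective I would prove that bouquet equivalence matches Gromov equivalence: two bouquets are identified in $\bbd X$ exactly when their constituent short paths fellow-travel boundedly, and I claim this occurs if and only if the endpoint sequences satisfy $\liminf\ip{x_m}{y_n}{o}=\infty$. Both implications reduce to comparing Gromov products of points lying on roughly geodesic paths, and for this the interior estimates of \cite{BF1} apply directly. For surjectivity I would run the construction in reverse: given $\xi\in\Gbd X$ represented by a Gromov sequence $(z_n)$, choose short paths $\g_n$ from $o$ to $z_n$ -- possible since $X$ is a length space, even though no geodesic need exist -- and use stability of roughly geodesic paths to show that the $\g_n$ fellow-travel on ever longer initial segments; after passing to a suitable subfamily they form a bouquet $b$ with $\Phi(b)=\xi$. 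This step is where the non-geodesic and non-proper hypotheses bite hardest, as the coherent family defining $b$ must be extracted without any compactness, relying only on shortness together with $\d$-hyperbolicity.

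It remains to prove that $\Phi$ and $\Phi^{-1}$ are continuous, which I would do by comparing neighbourhood bases at boundary points. A basic $\bt$-neighbourhood of a boundary point collects those points whose bouquets fellow-travel the given one out to a prescribed radius $R$, whereas a basic $\Gt$-neighbourhood, generated by the canonical gauge of visual metrics, has the form $\{\eta:\ip{\eta}{\xi}{o}>R\}$, up to the usual additive ambiguity in the Gromov product of boundary points. The crux is a two-sided quantitative dictionary: the endpoints of two bouquets have Gromov product exceeding $R$ if and only if the bouquets fellow-travel out to distance $R$, with an additive discrepancy controlled solely by $\d$ and the roughness parameter. Granting this, every basic neighbourhood on one side contains and is contained in basic neighbourhoods on the other, so $\Phi$ is a homeomorphism on the boundary; continuity at interior points, and the correct matching of the mixed neighbourhoods joining $X$ to the boundary, follow from the same Gromov product estimates.

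The main obstacle will be this fellow-travelling/Gromov-product dictionary, carried out uniformly over all the short paths comprising the bouquets. Once the additive constants arising from $\d$-hyperbolicity and from the roughness of short paths are tracked carefully, the topological comparison is essentially formal; the delicacy lies in the fact that, $X$ being neither geodesic nor proper, every estimate must be stated for short paths and rough geodesics rather than genuine geodesics, and the constants must be kept uniform throughout.
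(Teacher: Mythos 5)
Your proposal is correct and follows essentially the same route as the paper: the set-level identification via the tip map (tips of bouquets are Gromov sequences, with hyperbolicity needed precisely for injectivity and the Tripod Lemma driving both injectivity and the extraction of a bouquet from short paths to a Gromov sequence) is the paper's Theorem~\ref{T:Gromov bij}, and your two-sided ``fellow-travelling versus Gromov product'' dictionary for comparing neighbourhood bases, including the separate treatment of mixed neighbourhoods meeting $X$, is exactly how the paper proves Theorem~\ref{T:Gromov main}. The only soft spot is your appeal to ``interior estimates of \cite{BF1}'' for the equivalence-matching step, where the paper instead runs a careful argument via Lemma~\ref{L:Vais} and the Tripod Lemma; but the claims you make there are true and provable by those means.
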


The rest of this paper is organized as follows. After some preliminaries in
Section~\ref{S:prelims}, Section~\ref{S:rcat0} reviews the parts of the
basic theory of rCAT(0) spaces developed in \cite{BF1} that are needed here.

In Section~\ref{S:bouquet}, we investigate several definitions of the
bouquet boundary, all defined using equivalence classes of bouquets of
paths, and prove their equivalence as sets, i.e.~there is a natural
bijection between any pair of them. We also relate the bouquet, ideal, and
end boundaries, and prove the following result.

\begin{thm}\label{T:end intro}
If $X$ is an unbounded proper rCAT(0) space, then $\bbd X$ is nonempty.
\end{thm}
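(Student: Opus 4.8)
The plan is to construct a single bouquet explicitly, by a compactness argument modelled on the Arzel\`a--Ascoli construction of geodesic rays in a proper complete CAT(0) space. Fix a basepoint $o\in X$. Since $X$ is unbounded, I can choose points $x_n\in X$ with $r_n:=d(o,x_n)\to\infty$, and since $X$ is a length space there is, for each $n$, a short path $\sigma_n$ from $o$ to $x_n$; here I may use a single roughness constant $h$ for all $n$. After reparametrizing by arc length I regard $\sigma_n$ as a $1$-Lipschitz map $\sigma_n\colon[0,\ell_n]\to X$ with $\sigma_n(0)=o$ and $\ell_n\to\infty$. These paths are the raw material for the bouquet.

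Next I would pass to a limit. Fix $R>0$; once $\ell_n\ge R$, the restriction $\sigma_n|_{[0,R]}$ takes values in the closed ball $\bar B(o,R)$, which is compact because $X$ is proper, and the family $\{\sigma_n|_{[0,R]}\}$ is equicontinuous since each map is $1$-Lipschitz. Arzel\`a--Ascoli therefore yields a uniformly convergent subsequence on $[0,R]$, and a diagonal argument over $R\in\N$ produces a subsequence of $(\sigma_n)$ converging uniformly on every compact subset of $[0,\infty)$ to a $1$-Lipschitz map $\sigma\colon[0,\infty)\to X$ with $\sigma(0)=o$. The initial segments $\sigma|_{[0,t]}$, $t>0$, then form a candidate bouquet whose lengths tend to infinity.

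Finally I would verify that this candidate really is a bouquet in the sense of Section~\ref{S:bouquet}, which yields a point of $\bbd X$ and hence its nonemptiness. Here properness does the essential work: it is exactly what lets me replace the possibly incoherent family $\{\sigma_n\}$ by a convergent, and hence coherent, subsequence. The verification is where I expect the real difficulty to lie. The defining conditions for a short path and for a bouquet are inequalities among the basepoint distance and pairwise distances along the path, all of which are continuous under uniform convergence on compacta, so in principle they pass to the limit $\sigma$ with the same roughness constant $h$. I would phrase shortness using the arc-length parametrization, so that each segment $\sigma|_{[s,t]}$ has length exactly $t-s$ and the relevant inequality reads $t-s\le d(\sigma(s),\sigma(t))+h$; this form survives the limit because $d(\sigma_n(s),\sigma_n(t))\to d(\sigma(s),\sigma(t))$, whereas an inequality phrased directly in terms of $\len$ would be awkward since length is only lower semicontinuous. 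The one point demanding genuine care is the coherence condition tying together paths of different lengths into a single equivalence class, together with checking that the uniform roughness constant of the $\sigma_n$ is inherited by $\sigma$ rather than degrading in the limit.
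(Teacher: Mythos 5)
Your overall strategy --- extract a limit ray by Arzel\`a--Ascoli and properness, then read off a bouquet from its initial segments --- is genuinely different from the paper's argument (the paper never passes to a limit: it builds a bouquet directly as a diagonal subsequence of short paths, using finite $\e$-nets of balls and Lemma~\ref{L:rough cx}; see the proof of Theorem~\ref{T:end}). But as written your proof has a genuine gap, located exactly where you said you could take a shortcut: you cannot use ``a single roughness constant $h$ for all $n$''. By Definition~\ref{D:loose bouquet}, a bouquet is a sequence of \emph{unit speed $D$-short} segments, where $D$ is a short function in the sense of Definition~\ref{D:short}; so the shortness parameter of $\b_n$ must be at most $D(d(o,\b_n(L_n)))\le 1/d(o,\b_n(L_n))$, i.e.\ it must decay like the reciprocal of the distance to the tip. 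Your limit ray $\sigma$ does inherit the constant $h$ (that part of your worry is unfounded --- the constant does not degrade under uniform convergence on compacta), but initial segments that are merely $h$-short for a fixed $h>0$ simply fail the definition, so they do not determine a point of $\bbd X$. Nor can this be repaired afterwards by joining $o$ to points $\sigma(t_k)$ by genuinely short paths and comparing: every comparison tool available (the rCAT(0) condition itself, Lemma~\ref{L:rough cx}, Lemma~\ref{L:o1o2}) requires all sides of the relevant triangle to be $h'$-short with $h'$ at most the quantity $H$ of \eqref{E:H}, which decays like the reciprocal of the triangle's diameter, whereas your side along $\sigma$ is only $h$-short with $h$ fixed. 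Indeed the paper stresses, citing \cite[Example 3.3]{BF1}, that one cannot work with a constant bound for $h$.

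The gap is easily closed, and your proof then becomes correct and, in the proper setting, arguably cleaner than the paper's. Since $X$ is a length space, take each $\sigma_n$ to be $D$-short, i.e.\ $h_n$-short with $h_n=D(d(o,x_n))$, for one fixed short function $D$. By Lemma~\ref{L:subseg} every subsegment $\sigma_n|_{[s,t]}$ is $D$-short; since $D$ is $1$-Lipschitz, hence continuous, and $d(\sigma_n(s),\sigma_n(t))\to d(\sigma(s),\sigma(t))$, the inequality $t-s\le d(\sigma(s),\sigma(t))+D(d(\sigma(s),\sigma(t)))$ survives the limit, so every subsegment of $\sigma$ is $D$-short. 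Two details remain: $\sigma$ is only $1$-Lipschitz, so reparametrize it by arclength (the $D$-shortness of initial segments shows the length of $\sigma|_{[0,t]}$ lies in $[t-1,t]$, so lengths still tend to infinity); and the ``coherence'' you anticipated as the hard step is in fact trivial, because all paths of your candidate bouquet are initial segments of the single path $\sigma$, so condition (iii) of Definition~\ref{D:loose bouquet} holds with constant $0$, giving a $(0,D)$-bouquet and hence a point of $\bbd X$. Note that in this corrected form your construction never invokes the rCAT(0) condition at all, while the paper's proof needs Lemma~\ref{L:rough cx}; the price is that you use compactness of closed balls, whereas the paper's argument works under the strictly weaker hypothesis that every ball admits a finite $\e$-net.
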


Other possible definitions of the bouquet boundary use equivalence classes
of points ``tending to infinity'' (but not in the sense typically employed
for Gromov hyperbolic spaces). In Section~\ref{S:seq}, we prove that some
(but not all!) of these definitions are equivalent as sets to the
definitions in terms of path bouquets. We also show that in a Gromov
hyperbolic length space, all our sequential variants are equivalent as sets
to the Gromov boundary.

Finally in Section~\ref{S:top}, we define and investigate the bouquet
topology $\tau_B$, and prove the topological parts of the above results, as
well as the following result.

\begin{thm}\label{T:Hausdorff intro}
If $X$ is rCAT(0), then $\bcl X$ is Hausdorff and first countable. If
additionally $X$ is proper then both $\bcl X$ and $\bbd X$ are compact.
\end{thm}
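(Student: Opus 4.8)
The plan is to read off all three properties from explicit neighborhood bases of the bouquet topology $\bt$, treating interior points $x\in X$ and boundary points $\xi\in\bbd X$ separately. For the Hausdorff property I distinguish three cases. Two distinct interior points are separated by disjoint metric balls, which are $\bt$-open in $\bcl X$. To separate $x\in X$ from $\xi\in\bbd X$, I pair a small metric ball about $x$ with a basic neighborhood of $\xi$ that meets $X$ only in $X\setminus B(o,r)$; since a representing bouquet leaves every bounded set, taking $r$ suitably larger than $\dist(o,x)$ makes the two disjoint. The decisive case is $\xi\neq\eta$ in $\bbd X$: inequivalence of their representing bouquets means the associated short paths fail to stay uniformly close, so past some radius they are separated by more than a fixed $2\e_0>0$, and then basic neighborhoods of tolerance $\e_0$ computed beyond that radius are disjoint. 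First countability is then easy: the balls $B(x,1/n)$ give a countable base at an interior point, while at $\xi$ the basic neighborhoods shrink as the radius grows and the tolerance decreases, so the diagonal family indexed by $n\in\N$ (radius $n$, tolerance $1/n$) is a countable base.

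Now suppose $X$ is proper. Since interior points have metric-ball neighborhoods lying in $X$, the set $X$ is $\bt$-open, so $\bbd X$ is closed in $\bcl X$; it therefore suffices to prove that $\bcl X$ is compact, whereupon $\bbd X$ is compact as a closed subspace. I would first prove that $\bcl X$ is sequentially compact, and I expect this diagonal extraction to be the main obstacle. Let $(z_k)$ be a sequence in $\bcl X$. If some bounded set contains infinitely many $z_k$, then properness (closed balls are compact) supplies a subsequence converging in $X$. Otherwise the $z_k$ escape to infinity; attaching to each $z_k$ a short path $\gamma_k$ from the basepoint $o$ of large length (ending near $z_k$ if $z_k\in X$, or drawn from the bouquet representing $z_k$ otherwise), I use properness to extract, by a diagonal argument over an exhausting sequence of radii, a subsequence along which these paths converge uniformly on each compact parameter interval to a limiting family of paths. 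The crux is to verify, using the rCAT(0) comparison estimates recalled in Section~\ref{S:rcat0}, that this limit is a genuine bouquet --- its member paths are short and their lengths tend to infinity --- and that the extracted subsequence $\bt$-converges to the boundary point $\xi$ it represents.

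Finally, sequential compactness alone does not yield compactness, so I would upgrade it through second countability. A proper metric space is separable, hence $X$ is second countable; granting that $\bcl X$ likewise has a countable base --- the metric balls centred at a countable dense set together with the countable boundary neighborhoods isolated above --- the space $\bcl X$ is Lindel\"of. A Lindel\"of space in which every sequence has a cluster point is compact, and sequential compactness provides such cluster points, so $\bcl X$ is compact; as already noted, $\bbd X$ is then compact as well.
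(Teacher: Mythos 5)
Your Hausdorff and first-countability arguments run along essentially the same lines as the paper's (disjoint basic neighborhoods via Theorem~\ref{T:bases}, and the diagonal countable base of Remark~\ref{R:SS0}(b)). One caveat on the boundary--boundary case: the sets $S(x,r;n,t)$ are defined by an \emph{existential} quantifier over representatives in $q^{-1}(x)$ and $q^{-1}(y)$, so eventual separation of one pair of representing bouquets by $2\e_0$ is not enough to make two tolerance-$\e_0$ neighborhoods disjoint; you must beat $2\e_0$ plus the representative-ambiguity constant of \eqref{E:equiv est} (any two representatives of the same point stay within $5C+4$ of each other, which is why the paper separates at scale $15C+14$ with tolerance $1$). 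This is repairable, because rough convexity (Lemma~\ref{L:rough cx}) forces the separation of non-asymptotic bouquets to be unbounded, so you can exceed any constant; but as literally stated your step would fail.

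The compactness half has a genuine gap. The logic ``sequentially compact $+$ Lindel\"of $\Rightarrow$ compact'' is sound, but you never establish the Lindel\"of property: the proposed countable base --- metric balls centred at a countable dense subset of $X$ together with ``the countable boundary neighborhoods isolated above'' --- is not countable, because those local bases are indexed by the points of $\bbd X$, which is typically uncountable (for $X=\R^2$, $\bbd X$ is a circle). Moreover, nothing you have proved (Hausdorff, first countable, sequentially compact) implies compactness: $\omega_1$ with the order topology has all three properties and is neither compact nor Lindel\"of. So the upgrade step is a missing theorem, not a formality; second countability of $\bcl X$ for proper $X$ would need its own proof. The paper avoids this entirely by exploiting how $\bt$ is defined: $\bcl X$ is the image of the set $\gbq$ of generalized bouquets under the quotient map $q$, where $\gbq$ carries the subspace topology from the product $P=\prod_{n}\prod_{0\le t\le n}X_n^t$ of closed balls. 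When $X$ is proper, each factor is compact, so $P$ is compact by Tychonoff's theorem; the Arzel\`a--Ascoli argument you sketch is used there to show that $\gbq$ is \emph{closed} in $P$, hence compact; and $\bcl X=q(\gbq)$ is then a continuous image of a compact space, hence compact, with $\bbd X$ compact as a closed subspace since $X$ is open in $\bcl X$. Your diagonal-extraction work is exactly the right technical input, but it should be deployed to prove closedness of $\gbq$ in $P$ rather than sequential compactness of $\bcl X$; that rerouting removes the need for any countability of a global base.
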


\section{Preliminaries}\label{S:prelims}

Throughout this section, we suppose $(X,d)$ is a metric space. We say that
$X$ is {\it proper} if every closed ball in $X$ is compact.

We write $A\mino B$ and $A\maxo B$ for the minimum and maximum,
respectively, of two numbers $A,B$.

We define a {\it $h$-short segment} from $x$ to $y$, where $x,y\in X$, to be
a path of length at most $d(x,y)+h$, $h\ge 0$. A {\it geodesic segment} is a
$0$-short segment. $X$ is a {\it length space} if there is a $h$-short
segment between each pair $x,y\in X$ for every $h>0$, and $X$ is a {\it
geodesic space} if there is a geodesic segment between each pair $x,y\in X$.

A {\it geodesic ray} in $X$ is a path $\g:[0,\infty)\to X$ such that each
initial segment $\g|_{[0,t]}$ of $\g$ is a geodesic segment. The {\it ideal
boundary} $\ibd X$ of $X$ is the set of equivalence classes of geodesic rays
in $X$, where two geodesic rays $\g_1,\g_2$ are said to be equivalent if
$d(\tg_1(t),\tg_2(t))$ is uniformly bounded for all $t\ge 0$, where $\tg_i$
is the unit speed reparametrization of $\g_i$, $i=1,2$.

We refer the reader to \cite[Part II]{BHr} for the theory of CAT(0) spaces.
The ideal boundary $\ibd X$ of a complete CAT(0) space can be identified
with the set of geodesic rays from any fixed origin $o\in X$
\cite[II.8.2]{BHr}.

\begin{dfn}\label{D:cone top}
The {\it cone topology} $\ct$ on the {\it ideal bordification} $\icl X:=
X\cup \ibd X$ of a complete CAT(0) space $X$ is the topology with the
following neighborhood basis:
$$
\cB(x) =
  \begin{cases}
  \{B(x,r)  \mid r   > 0\}\,, & x\in X\,, \\
  \{U(x,r,t)\mid r,t > 0\}\,, & x\in \ibd X\,,
  \end{cases}
$$
where
\begin{align*}
B(x,r)   &= \{ y\in X\mid d(y,x)<r \} \\
U(x,r,t) &= \{ y\in \icl X\setminus B(o,r)\mid d(p_t(x),p_t(y))<r \}\,,
\end{align*}
and $p_t:\icl X\setminus B(o,r)\to X$ is the projection defined by the
intersection of the metric sphere $S(o,r)$ and the geodesic segment or ray
from $o$ to $x$.
\end{dfn}

We refer the reader to \cite{GH}, \cite{CDP}, \cite{Va}, or \cite[Part
III.H]{BHr} for the theory of Gromov hyperbolic spaces. We use the
non-geodesic definition: a metric space $(X,d)$ is {\it $\d$-hyperbolic},
$\d\ge 0$, if
$$ \ip xzw\ge \ip xyw\mino \ip yzw - \d\,, \qquad x,y,z,w\in X\,, $$
where $\ip xzw$ is the Gromov product\footnotemark{} defined by
\footnotetext{$\ip xyw$ is more commonly written as $\left<x,y\right>_w$.
Our notation is designed to avoid double subscripts.}%
$$ 2\ip xyw = d(x,w) + d(y,w) - d(x,y)\,. $$

Gromov sequences and the Gromov boundary have mainly been considered in
Gromov hyperbolic spaces, but they have also been defined in general metric
spaces \cite{BK}.

\begin{dfn}\label{D:Gromov seq}
A {\it Gromov sequence} in a metric space $X$ is a sequence $(x_n)$ in $X$
such that $\IP<x_m,x_n;o>\to\infty$ as $m,n\to\infty$. If $x=(x_n)$ and
$y=(y_n)$ are two such sequences, we write $(x,y)\in E$ if
$\IP<x_m,y_n;o>\to\infty$ as $m,n\to\infty$. Then $E$ is a reflexive
symmetric relation on the set of Gromov sequences in $X$, so its transitive
closure $\sim$ is an equivalence relation on the set of Gromov sequences in
$X$. Note that $E$ is already an equivalence relation if $X$ is Gromov
hyperbolic, but this is not true in general metric spaces \cite[1.5]{BK}.
The {\it Gromov boundary} $\Gbd X$ is the set of equivalence classes
$[(x_n)]$ of Gromov sequences.%
\end{dfn}

To simplify the statement of the following definition, we identify $x\in X$
with the singleton equivalence class $[(x_n)]$, where $x_n=x$ for all $n$.

\begin{dfn}\label{D:Gromov top}
The {\it Gromov bordification} $\Gcl X:= X\cup \Gbd X$ of a Gromov
hyperbolic space $X$ can be equipped with the {\it canonical topology} $\Gt$
that has the following neighborhood basis:
$$
\cG(x) =
  \begin{cases}
  \{B(x,r)\mid r > 0\}\,, & x\in X\,, \\
  \{V(x,r)\mid r > 0\}\,, & x\in \Gbd X\,,
  \end{cases}
$$
where $B(x,r)$ is as in Definition~\ref{D:cone top} and
\begin{align*}
V(x,r) = \{ y\in&\Gcl X \mid \exists \text{ \rm Gromov sequences }
  (a_n),(b_n) \,:\, \\
  &\quad[(a_n)]=x,\, [(b_n)]=y,\,
  \text{ and } \liminf_{m,n\to\infty} \IP<a_m,b_n;o> > r
  \}\,,
\end{align*}
\end{dfn}

The topology $\Gt$ is often given only for $\Gbd X$ where it is associated
with a canonical gauge of metrics, but we do not need these metrics. However
$\Gt$ defined on all of $\Gcl X$ can be found in the literature: for
instance, $\Gt$ is equivalent to the topology of \cite[III.H.3.5]{BHr} (for
proper geodesic hyperbolic spaces, to that of \cite[p.~6]{KB}, and to the
topology $\mathcal T_1^*$ in \cite[5.29]{Va} (but it is coarser than
$\mathcal T^*$ also defined there).


\section{Rough CAT(0) spaces}\label{S:rcat0}

In this section we review various notions of rough CAT(0) introduced in our
first paper \cite{BF1}, as well as some rCAT(0) results that we need here.
Except where otherwise referenced or proved, proofs of statements in this
section can be found in \cite{BF1}, where the reader can also find a more
detailed discussion of the concepts introduced below.

For the following definitions of short triangles and comparison points, we
denote $h$-short segments connecting points $x,y \in X$ by $[x,y]_h$. We use
the notation $[x,y]_h$ also for the image of this path, so instead of
$z=\g(t)$ for some $0\le t\le L$, we write $z\in[x,y]_h$. Given such a path
$\g$ and point $z=\g(t)$, we denote by $[x,z]_h$ and $[z,y]_h$ the subpaths
$\g|_{[0,t]}$ and $\g|_{[t,L]}$, respectively, both of which are also
$h$-short segments. This notation is ambiguous: given points $x,y$ in a
length space $X$ with at least two points, there are always many short
segments $[x,y]_h$ for each $h>0$. However the choice of $[x,y]_h$, once
made, does not affect the truth of the underlying statement.%

A \emph{$h$-short triangle} $T:=T_h(x_1,x_2,x_3)$ with vertices
$x_1,x_2,x_3\in X$ is defined as a collection of $h$-short segments
$[x_1,x_2]_h$, $[x_2,x_3]_h$ and $[x_3,x_1]_h$, and a \emph{comparison
triangle} is then a geodesic triangle $\bT:=T(\bx_1,\bx_2,\bx_3)$ in the
model space, Euclidean $\R^2$, so that $|\bx_i-\bx_j|=d(x_i,x_j)$,
$i,j\in\{1,2,3\}$. We say that $\bu\in \bT$ is a \emph{$h$-comparison point}
for $u\in T$, say $u\in [x_1,x_2]_h$, if
$$
|\bx-\bu| \leq \len([x,u]_h)
  \quad\text{and}\quad
|\bu-\by| \leq \len([u,y]_h)\,.
$$
Note that $\bu$ is not uniquely determined by $u$, but we do have
$$
|\bx-\bu| \geq \len([x,u]_h) - h
  \quad\text{and}\quad
|\bu-\by| \geq \len([u,y]_h) - h.
$$
Given a $h$-short triangle $T:=T_h(x,y,z)$ in any length space $X$, and
$u\in T$, we can always find a comparison triangle and $h$-comparison point
in $\R^2$.

Let $C\ge0$, and $h\ge0$. Suppose $T_h(x,y,z)$ is a $h$-short triangle in
$X$. We say that $T_h(x,y,z)$ satisfies the \emph{$C$-rough CAT(0)
condition} if given a comparison triangle $T(\bx,\by,\bz)$ in $\R^2$
associated with $T_h(x,y,z)$, we have
$$
d(u,v) \leq |\bu - \bv| + C\,,
$$
whenever $u,v$ lie on different sides of $T_h(x,y,z)$ and $\bu,\bv \in
T(\bx,\by,\bz)$ are corresponding $h$-comparison points.

\begin{dfn}
We say that the length space $(X,d)$ is \emph{$C$-\rcat(0)}, $C>0$, if
$T_h(x,y,z)$ satisfies the $C$-rough CAT(0) condition whenever $T_h(x,y,z)$
is a $h$-short triangle in $X$ with
\begin{equation}\label{E:H}
h\le H(x,y,z) = \frac{1}{1\maxo d(x,y)\maxo d(x,z)\maxo d(y,z)} \,.
\end{equation}
\end{dfn}

We omit the \emph{roughness constant} $C$ in the above notation if its value
is unimportant.

Our specific choice of $H$, although often useful, seems somewhat contrived.
A more natural definition would be to assume that there exists some
$H:X\times X\times X\to(0,\infty)$ such that the $C$-\rcat(0) condition
holds for $T_h(x,y,z)$ whenever $h\le H(x,y,z)$. We call this the {\it
$C$-\hrcat(0) condition}, $C>0$. It is formally weaker than the $C$-\rcat(0)
condition, but the two definitions are equivalent in the sense that
a $C$-\hrcat(0) space is $C'$-\rcat(0), with $C'=3C+2+\sqrt{3}$. %
Outside of esthetics, another advantage of the $C$-\hrcat(0) condition is
that it is an interesting condition for $C$ near $0$, unlike $C$-\rcat(0);
see Proposition~\ref{P:cat0 is rcat0}.

To ensure that CAT(0) spaces (or even just the Euclidean plane) are rCAT(0)
spaces, we need $h$ to be bounded by at most a fixed multiple of the above
function $H$; see \cite[Example 3.3]{BF1}. In particular, one cannot pick a
constant bound for $h$. Combining Theorem 4.5 and Corollary 4.6 of
\cite{BF1}, we do however get the following result.

\begin{prop}\label{P:cat0 is rcat0}
A CAT(0) space $X$ is $C$-\rcat(0) with $C=2+\sqrt{3}$, and $C$-\hrcat(0)
for all $C>0$.
\end{prop}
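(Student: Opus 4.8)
The plan is to compare an arbitrary $h$-short triangle in the complete CAT(0) space $X$ with the genuine geodesic triangle on the same three vertices, using that $X$ is uniquely geodesic. Fix a $h$-short triangle $T_h(x,y,z)$ with $h\le H(x,y,z)$, and let $T_0(x,y,z)$ be the geodesic triangle on the same vertices. The key structural observation is that both triangles have the \emph{same} comparison triangle $\bT=T(\bx,\by,\bz)$ in $\R^2$, since the comparison triangle is determined by the vertex distances $d(x_i,x_j)$ alone, and these are unchanged when the sides are replaced by geodesics. I would also record at the outset that $h\le H(x,y,z)$ forces both $h\le1$ and $h\,d(x_i,x_j)\le1$ for every pair of vertices; these are the two normalizations that control the size of the roughness.

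Next, for a point $u\in[x,y]_h$ with $h$-comparison point $\bu\in[\bx,\by]$, I would introduce the point $u^*$ on the geodesic $[x,y]_0$ whose $0$-comparison point in $\bT$ is again $\bu$, namely the point at arclength $|\bx-\bu|$ from $x$ (well defined since $|\bx-\bu|\le d(x,y)$). The heart of the proof is then the deviation estimate
\[
d(u,u^*)\le 1+\tfrac{\sqrt3}{2}.
\]
I would establish this by applying the genuine CAT(0) inequality inside the auxiliary geodesic triangle $T_0(x,u,y)$, whose comparison triangle has side lengths $d(x,u)$, $d(u,y)$, $d(x,y)$ and in which $u^*$ lies on the side opposite $u$; this bounds $d(u,u^*)$ by the planar distance from the comparison vertex of $u$ to the comparison point of $u^*$. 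That planar distance is then estimated by the law of cosines, using $\len([x,u]_h)\le d(x,u)+h$ and the comparison-point inequalities, which together pin the arclength parameter of $u^*$ to within $h$ of $d(x,u)$, and using the normalizations $h\le1$ and $h\,d(x,y)\le1$.

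Granting this, the $C$-\rcat(0) conclusion with $C=2+\sqrt3$ is immediate: if $u,v$ lie on different sides of $T_h(x,y,z)$ with comparison points $\bu,\bv$, then the corresponding geodesic points $u^*,v^*$ have comparison points $\bu,\bv$ in the shared triangle $\bT$, so genuine CAT(0) applied to $T_0(x,y,z)$ gives $d(u^*,v^*)\le|\bu-\bv|$, and the triangle inequality yields
\[
d(u,v)\le d(u,u^*)+|\bu-\bv|+d(v^*,v)\le|\bu-\bv|+\(2+\sqrt3\).
\]
For the $C$-\hrcat(0) assertion I would note that the planar bound on $d(u,u^*)$ is of order $\sqrt{h\,d(x,y)}+h$, hence tends to $0$ as $h\to0$ with the vertices fixed, uniformly in the position of $u$; so for any prescribed $C>0$ and any $x,y,z$ one may simply choose $H(x,y,z)$ small enough that $2\,d(u,u^*)<C$ throughout, which is exactly the latitude the \hrcat(0) definition allows.

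The step I expect to be the main obstacle is the sharp deviation estimate and the bookkeeping that extracts the precise constant $2+\sqrt3$. The transverse deviation of $u$ from the geodesic and the longitudinal shift of its foot-point are not simultaneously maximal -- the former peaks near the midpoint of the side while the latter is largest near a vertex -- so splitting $d(u,u^*)$ naively through the nearest-point projection overestimates the constant; one must instead optimize the single planar expression jointly over the side lengths and the position of $u$ along the side.
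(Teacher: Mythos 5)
Your proposal is correct and takes essentially the same route as the paper, whose own ``proof'' of this proposition is a citation of Theorem~4.5 and Corollary~4.6 of \cite{BF1} --- namely exactly your deviation estimate (an $h$-short path in a CAT(0) space stays within roughly $\sqrt{h\,d(x,y)}+h$ of the geodesic with the same endpoints, uniformly once $h\le 1$ and $h\,d(x,y)\le 1$), followed by the genuine CAT(0) inequality on the geodesic triangle, which shares the comparison triangle, and the triangle inequality. Two minor remarks: completeness is never needed (CAT(0) spaces are geodesic by definition, and only existence, not uniqueness, of the geodesic triangle is used), and the joint optimization you fear at the end is actually unnecessary --- in the comparison triangle for $T_0(x,u,y)$ the transverse height is at most $\sqrt{3}/2$ (Heron's formula with $e:=d(x,u)+d(u,y)-d(x,y)\le h$, $h\le 1$, $h\,d(x,y)\le 1$), the longitudinal shift between the foot of the perpendicular and the parameter $|\bx-\bu|$ is at most $h\le 1$ (using the coupling $(\,\mathrm{len}([x,u]_h)-d(x,u))+e\le h$), and these add to exactly $1+\sqrt{3}/2$, which doubles to the stated $C=2+\sqrt{3}$.
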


The analogous relationship with Gromov hyperbolic spaces is given by the
following result, which follows from the proof of \cite[Theorem~3.18]{BF1}.

\begin{prop}\label{P:Gh is rcat0}
A $\d$-hyperbolic length space, $\d\ge 0$, is $C$-\rcat(0) with $C=2+4\d$.
\end{prop}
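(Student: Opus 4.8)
The plan is to compare $d(u,v)$ with $|\bu-\bv|$ by routing both quantities through the Euclidean \emph{tripod} determined by the side lengths of the comparison triangle. Since any two sides of a short triangle share a vertex, after relabelling I may assume $u\in[x,y]_h$ and $v\in[x,z]_h$, so that $\bu$ lies on $[\bx,\by]$ and $\bv$ on $[\bx,\bz]$. Write $s=d(x,u)$, $t=d(x,v)$, $\sigma=|\bx-\bu|$, $\theta=|\bx-\bv|$, and $s_x=\ip yzx=(b+c-a)/2$ with $a=d(y,z)$, $b=d(x,z)$, $c=d(x,y)$; note $s_x$ is unchanged in the comparison triangle since vertex distances are preserved. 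The comparison-point inequalities together with $h$-shortness place $\sigma$ and $s$ (resp. $\theta$ and $t$) in a common interval of length $h$, so $|s-\sigma|\le h$ and $|t-\theta|\le h$.

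First I would bound $d(u,v)$ from above using hyperbolicity. Because $u$ lies on an $h$-short segment from $x$ to $y$, the defining length estimates give $\ip uyx\ge s-h/2$, and similarly $\ip zvx\ge t-h/2$, while $\ip yzx=s_x$ exactly. Two applications of the four-point condition, splitting off first $y$ and then $z$, yield
$$
\ip uvx\ \ge\ \min(s,t,s_x)-\tfrac h2-2\d,
$$
whence $d(u,v)=s+t-2\ip uvx\le \bigl(s+t-2\min(s,t,s_x)\bigr)+h+4\d$. The bracketed term is precisely the tripod distance between the images of $u,v$, computed from the parameters $s,t,s_x$.

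Next I would bound $|\bu-\bv|$ from below by the corresponding tripod distance, i.e. establish the Euclidean lemma $|\bu-\bv|\ge \sigma+\theta-2\min(\sigma,\theta,s_x)$. By the law of cosines $|\bu-\bv|^2=\sigma^2+\theta^2-2\sigma\theta\cos\alpha$, where $\alpha$ is the comparison angle at $\bx$. When $\min(\sigma,\theta)\le s_x$ the right-hand side is $|\sigma-\theta|$ and the inequality is just $\cos\alpha\le1$; the substantive case is $\sigma,\theta\ge s_x$, where expanding and using the half-angle identity $\cos^2(\alpha/2)=(a+b+c)\,s_x/(2bc)$ reduces the claim to $(\sigma+\theta-s_x)\,bc\ge(a+b+c)\,\sigma\theta/2$. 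Both sides are affine in $\sigma$ for fixed $\theta$ and symmetrically, so it suffices to check the four corners of $[s_x,c]\times[s_x,b]$, each of which collapses to $\cos^2(\alpha/2)\le1$ or to equality at $(\bu,\bv)=(\by,\bz)$. I expect this Euclidean lemma---especially the ``both points beyond the incircle contact points'' case---to be the main obstacle; the rest is the standard four-point manipulation or bookkeeping.

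Finally I would combine the two bounds. The tripod distance is $1$-Lipschitz in each endpoint parameter, so replacing $(s,t)$ by $(\sigma,\theta)$ changes it by at most $2h$; hence
$$
d(u,v)\ \le\ |\bu-\bv|+3h+4\d\ \le\ |\bu-\bv|+3+4\d,
$$
using $h\le H(x,y,z)\le1$. This already yields the $C$-\rcat(0) condition with a constant of the stated shape $c_0+4\d$; tightening the additive bookkeeping (carrying the $h/2$ terms and the Lipschitz loss more carefully) sharpens $c_0$ to the claimed value $2$.
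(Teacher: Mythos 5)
Your architecture is sound, and since the paper offers no self-contained argument for this proposition (it is stated as following from the proof of \cite[Theorem~3.18]{BF1}), your proof has to stand alone --- and it almost does. The reduction to two sides meeting at $x$, the bound $\ip{u}{v}{x}\ge\min(s,t,s_x)-h/2-2\d$ from two applications of the four-point condition, and the Euclidean lemma $|\bu-\bv|\ge\sigma+\theta-2\min(\sigma,\theta,s_x)$ (your proof of it is valid: both sides of the reduced inequality are affine in $\sigma$ and in $\theta$, and the four corners of $[s_x,c]\times[s_x,b]$ reduce to $\cos^2(\alpha/2)\le 1$ or to equality at $(\by,\bz)$) are all correct, and as written they prove the $C$-rCAT(0) condition with $C=3+4\d$. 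The genuine gap is your final sentence: the sharpening to $C=2+4\d$ is asserted, not proved, and it is \emph{not} obtainable by re-estimating the terms in your chain more carefully. Writing $T(p,q):=p+q-2\min(p,q,s_x)$, each link of the chain $d(u,v)\le T(s,t)+h+4\d\le T(\sigma,\theta)+3h+4\d\le|\bu-\bv|+3h+4\d$ is, in isolation, best possible (this is easy to check in a tree), so pivoting through the tripod distance and paying the Lipschitz penalty separately genuinely costs $3h$, and $h$ can be as large as $1$.

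What closes the gap is a coupling of the two sources of slack, which draw on the same budget of path-length excess $\len([x,y]_h)-c\le h$: a configuration maximizing $\sigma-s$ and one maximizing the slack in $\ip{u}{y}{x}\ge s-h/2$ cannot coexist. Concretely, set $m_1=\min\bigl(\ip{u}{y}{x},\,s_x,\,\ip{z}{v}{x}\bigr)$ and $m_2=\min(\sigma,\theta,s_x)$, so that your hyperbolicity estimate and Euclidean lemma combine to
$$
d(u,v)-|\bu-\bv|\ \le\ (s-\sigma)+(t-\theta)+2(m_2-m_1)+4\d\,.
$$
If $m_1=s_x$, then $m_2-m_1\le 0$ and the right-hand side is at most $2h+4\d$. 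If $m_1=\ip{u}{y}{x}$, use $m_2\le\sigma$ and the identity $2\ip{u}{y}{x}=s+c-d(u,y)$ to get
$$
(s-\sigma)+2\bigl(m_2-\ip{u}{y}{x}\bigr)\ \le\ s+\sigma-2\ip{u}{y}{x}
\ =\ d(u,y)-(c-\sigma)\ =\ d(u,y)-|\bu-\by|\ \le\ h\,,
$$
where the last step is the comparison-point estimate at the \emph{far} vertex, $|\bu-\by|\ge\len([u,y]_h)-h\ge d(u,y)-h$; adding $t-\theta\le h$ gives $2h+4\d$ again, and the case $m_1=\ip{z}{v}{x}$ is symmetric. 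Hence $d(u,v)\le|\bu-\bv|+2h+4\d\le|\bu-\bv|+2+4\d$, which is exactly the stated constant. So your two main lemmas suffice, but the passage from $3+4\d$ to $2+4\d$ requires this three-case coupling --- replacing the separated bounds $|s-\sigma|\le h$ and $\ip{u}{y}{x}\ge s-h/2$ by the single far-vertex inequality --- and not merely more careful arithmetic.
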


The CAT(0) condition is equivalent to a weaker version of itself where the
comparison inequality is assumed only when one point is a vertex, and one
can even restrict the other point to being the midpoint of the opposite
side. Analogously \emph{weak} and \emph{very weak $C$-\rcat(0) spaces} are
defined by making the corresponding changes to the above definitions of
$C$-\rcat(0) spaces. Trivially an rCAT(0) space is weak rCAT(0), and a weak
rCAT(0) space is very weak rCAT(0), but we cannot at this time determine the
truth of the reverse implications.

We will not use the weak and very weak \rcat(0) variants in this paper, but
let us mention that the very weak variant is quantitatively equivalent to
the notion of bolicity introduced by Kasparov and Skandalis \cite{KS1},
\cite{KS2}; see \cite[Proposition 3.11]{BF1}.

The weak $C$-\rcat(0) condition can be written in the following more
explicit form: if $u=\la(s)$, where $\la:[0,L]\to X$ is a $h$-short path
from $y$ to $z$ parametrized by arclength, $h$ satisfies the usual bound,
and $0\le t\le 1$ is such that $td(y,z)\le s$ and $(1-t)d(y,z)\le L-s$, then
\begin{equation}\label{E:wrCAT0}
(d(x,u)-C)^2 \le (1-t)(d(x,y))^2+t(d(x,z))^2-t(1-t)(d(y,z))^2\,.
\end{equation}
This inequality holds {\it a fortiori} in $C$-rCAT(0) spaces, a fact that
will be useful later. Note that \eqref{E:wrCAT0} follows easily from the
definition of weak \rcat(0) and the following easily proved equality in the
Euclidean plane for a triangle with vertices $x,y,z$ and a point $u$ on the
side $yz$ such that $|y-u|=t|y-z|$:
\begin{equation*}
|x-u|^2 = (1-t)|x-y|^2+t|x-z|^2-t(1-t)|y-z|^2\,.
\end{equation*}

We have the following rough convexity lemma for rCAT(0) spaces.

\begin{lem}\label{L:rough cx}
Suppose $a_1,a_2,b_1,b_2$ are points in a $C$-rCAT(0) space $X$. Let
$\g_i:[0,1]\to X$ be constant speed $h_i$-short paths from $a_i$ to $b_i$,
$i=1,2$, where $h_i\le 1/(1\maxo d(a_i,b_i))$. Then there exists a constant
$C'$ such that
$$
d(\g_1(t),\g_2(t))\le (1-t)d(a_1,a_2)+td(b_1,b_2)+C'\,.
$$
In fact we can take $C'=2C$, and if either $a_1=a_2$ or $b_1=b_2$, we can
take $C'=C$. If $X$ is CAT(0), we can take $C'$ to be any positive number if
we add the restriction that $h_1,h_2\le\e$, where $\e=\e(C',d(a_1,b_1)\maxo
d(a_2,b_2))>0$ is sufficiently small.
\end{lem}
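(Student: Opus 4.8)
The plan is to imitate the classical proof that the metric is convex along geodesics in a CAT(0) space, replacing each exact comparison by the $C$-rCAT(0) comparison and tracking how many times the roughness constant $C$ is incurred. The classical proof splits the quadrilateral $a_1a_2b_2b_1$ by a diagonal into two triangles with a shared vertex; each shared-vertex comparison is exact in CAT(0) but costs one copy of $C$ in the rough setting, which is exactly why the constant is $2C$ in general and $C$ when an endpoint is already shared (one triangle then degenerates away).

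First I would treat the shared-endpoint case, say $a_1=a_2=:a$ (the case $b_1=b_2$ being identical after reversing parametrizations), and obtain the sharp bound $C'=C$ from a single comparison. I form a comparison triangle $\bT=T(\bar a,\bar b_1,\bar b_2)$ for the short triangle on $(a,b_1,b_2)$ whose two sides issuing from $a$ are $\g_1,\g_2$ and whose third side is an auxiliary short segment $[b_1,b_2]$. The key observation is that $\g_i(t)$ lies at arclength $t\,\len(\g_i)\ge t\,d(a,b_i)$ from $a$, so the exact fraction-$t$ point $\bar a+t(\bar b_i-\bar a)$ on the Euclidean side $[\bar a,\bar b_i]$ is a legitimate $h$-comparison point for $\g_i(t)$; for this choice the two comparison points are separated by exactly $t\,|\bar b_1-\bar b_2|=t\,d(b_1,b_2)$, since $\R^2$ is flat. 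The $C$-rCAT(0) inequality applied to $\g_1(t),\g_2(t)$, which lie on different sides, then yields $d(\g_1(t),\g_2(t))\le t\,d(b_1,b_2)+C$. For the general case I insert a short diagonal $\ms\colon[0,1]\to X$ from $a_1$ to $b_2$, put $m=\ms(t)$, and apply the shared-endpoint estimate to $\g_1,\ms$ (common start $a_1$) and to $\ms,\g_2$ (common endpoint $b_2$); the triangle inequality $d(\g_1(t),\g_2(t))\le d(\g_1(t),m)+d(m,\g_2(t))$ then gives the bound with $C'=2C$.

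The main obstacle will be the $h$-admissibility of these comparison triangles. The hypothesis only bounds $h_i$ by $1/(1\maxo d(a_i,b_i))$, whereas the definition of $C$-rCAT(0), and equally the explicit inequality \eqref{E:wrCAT0}, license the comparison only for $h\le H$ as in \eqref{E:H}, a quantity governed by the \emph{longest} side of the triangle; for a long, lopsided triangle one may well have $h_i>H$. I would resolve this by subdividing any over-long side into arcs of length at most $1$, on which the local value of $H$ equals $1$ and hence dominates every $h_i\le1$, so that the comparison does apply on each sub-triangle. The delicate point, and where I expect the real work to lie, is to carry out this subdivision so that the additive constant remains exactly $C$ (resp.\ $2C$) rather than accumulating a copy of $C$ per sub-triangle; I would arrange this by first establishing the estimate at sub-unit scale, where the comparison is unconditionally available, and only then assembling the global inequality using the exactness of Euclidean convexity, together with crude triangle-inequality bounds (absorbed into $C$) to dispose of the degenerate configurations in which one side is very short relative to the others.

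Finally, for the CAT(0) refinement I would abandon rCAT(0) and use the genuine, \emph{exact} convexity of the metric along geodesics. An $h$-short path joining two points at distance at most $R$ stays within a distance $\d(h,R)\to0$ (as $h\to0$) of the geodesic with the same endpoints, by a standard thinness estimate in CAT(0) spaces. Given $C'>0$, I choose $\e=\e(C',R)$ with $R=d(a_1,b_1)\maxo d(a_2,b_2)$ small enough that $2\,\d(\e,R)\le C'$; comparing each $\g_i(t)$ with the corresponding point on the geodesic from $a_i$ to $b_i$ and invoking exact CAT(0) convexity together with the triangle inequality then yields $d(\g_1(t),\g_2(t))\le(1-t)d(a_1,a_2)+t\,d(b_1,b_2)+C'$, which is the asserted refinement. (That $X$ is CAT(0) and hence rCAT(0) is Proposition~\ref{P:cat0 is rcat0}.)
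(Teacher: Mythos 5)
Two things need to be separated here: what the paper actually proves, and what you prove. The paper does \emph{not} prove the main estimate of Lemma~\ref{L:rough cx} at all --- the cases $C'=2C$ and $C'=C$ are quoted verbatim from \cite[Lemma~4.7]{BF1}. The only part proved in this paper is the final CAT(0) refinement, and for that part your argument (exact convexity of the metric along geodesics, plus the fact that an $h$-short path stays uniformly close to the geodesic with the same endpoints once $h$ is small, i.e.\ \cite[Theorem~4.5]{BF1}) is the same as the paper's. So the substance of your proposal is your from-scratch proof of the main estimate, and there you have a genuine gap --- one you flag yourself but never close.

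The gap is the admissibility issue, and it is fatal to the argument as written. The $C$-rCAT(0) condition may be invoked for a triangle on vertices $x,y,z$ only when \emph{all three} sides are $h$-short for some $h\le H(x,y,z)=1/(1\maxo d(x,y)\maxo d(x,z)\maxo d(y,z))$, whereas your hypothesis controls only $h_i\le 1/(1\maxo d(a_i,b_i))$. In your shared-start triangle $(a,b_1,b_2)$ the side $\g_1$ can violate this by an unbounded factor (take $d(a,b_1)$ comparable to $C$ and $d(a,b_2)$ enormous), and the violation is \emph{not} confined to ``long, lopsided'' triangles: even when $d(a,b_1)=d(a,b_2)=r$, the third side can have length close to $2r$, so $H\approx 1/(2r)$ while $h_i$ may be as large as $1/r$. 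For the same reason the bad configurations cannot be ``disposed of by crude triangle-inequality bounds absorbed into $C$'': the triangle inequality yields the claimed bound only when $d(a,b_1)\lesssim C$. Your proposed repair does not parse as stated --- $H$ is attached to a triangle, not to an arc, so ``subdividing an over-long side into arcs of length at most $1$'' produces no triangles to which any comparison applies; any honest completion (a fan of triangles through the apex, or a ladder of quadrilaterals) incurs one copy of $C$ per piece, which is exactly the accumulation you admit must be avoided, and the promised assembly ``using the exactness of Euclidean convexity'' is never carried out. Note that what your hypothesis is actually tuned to is the \emph{degenerate} triangle with vertices $(x,y,y)$, for which $H(x,y,y)=1/(1\maxo d(x,y))$: this licenses the comparison for two short paths sharing \emph{both} endpoints and gives that case with constant $C$; bridging from there to distinct endpoints without degrading the constants $C$ and $2C$ is precisely the content of \cite[Lemma~4.7]{BF1}, which your outline does not supply. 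As it stands, your argument (whose skeleton --- fraction-$t$ comparison points, then a diagonal splitting the quadrilateral into two shared-vertex comparisons --- is sound and formally yields the stated constants) proves the lemma only under the stronger, unjustified assumption that $h_1$ and $h_2$ are bounded by the $H$-values of the two triangles you use.
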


Except for its last statement, the above lemma is just a restatement of
\cite[Lemma~4.7]{BF1}. The last statement follows from the corresponding
convexity result for geodesic segments in a CAT(0) space (which states that
the estimate of Lemma~\ref{L:rough cx} holds with $C'=0$) and the fact that
a $h$-short path between any fixed pair of points $x,y$ in a CAT(0) space is
forced to stay arbitrarily close to the geodesic segment between these
points as long as both $h$ and $hd(x,y)$ are sufficiently small
\cite[Theorem~4.5]{BF1}.

Lastly we state and prove two lemmas that we will need in Sections
\ref{S:bouquet} and \ref{S:top}.

\begin{lem}\label{L:o1o2}
Suppose $o_1,o_2,u_1,u_2,x$ are points in a $C$-rCAT(0) space $X$ such that:
\begin{enumerate}
\item For $i=1,2$, $u_i$ lies on a path $\g_i$ of length $L_i$ from
    $o_i$ to $x$;
\item For $i=1,2$, $d(o_i,u_i)\le d(o_i,x)$;
\item $d(o_1,u_1)=d(o_2,u_2)$;%
\item For $i=1,2$, $\g_i$ is $h$-short, where $h:=H(o_1,o_2,x)$ and $H$
    is as defined in \eqref{E:H}.
\end{enumerate}
Then $d(u_1,u_2)\le C+d(o_1,o_2)$.
\end{lem}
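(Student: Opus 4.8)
The plan is to realize $u_1$ and $u_2$ as points on two different sides of a single short triangle on the vertices $o_1,o_2,x$, and then to combine the $C$-rough CAT(0) condition with an elementary planar comparison. Concretely, I would form the $h$-short triangle $T_h(o_1,o_2,x)$ whose sides are the given paths $\g_1=[o_1,x]_h$ and $\g_2=[o_2,x]_h$ together with an arbitrary $h$-short segment $[o_1,o_2]_h$, where $h=H(o_1,o_2,x)$. By hypothesis (4) this is exactly the bound on $h$ under which the $C$-rough CAT(0) condition applies to this triangle. Since $u_1\in[o_1,x]_h$ and $u_2\in[o_2,x]_h$ lie on \emph{different} sides, the condition yields $d(u_1,u_2)\le|\bu_1-\bu_2|+C$ for any pair of corresponding $h$-comparison points $\bu_1,\bu_2$ in a Euclidean comparison triangle $T(\bo_1,\bo_2,\bx)$ (which exists because $d(o_1,o_2)$, $d(o_1,x)$, $d(o_2,x)$ satisfy the triangle inequalities). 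It therefore suffices to exhibit one choice of comparison points with $|\bu_1-\bu_2|\le d(o_1,o_2)=|\bo_1-\bo_2|$.

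The crucial idea, which is where hypothesis (3) enters, is to place \emph{both} comparison points at the common distance $r:=d(o_1,u_1)=d(o_2,u_2)$ from their respective vertices, i.e. to take $\bu_i$ on $[\bo_i,\bx]$ with $|\bo_i-\bu_i|=r$. I would then verify that this choice is admissible, i.e. that it meets the two defining inequalities for an $h$-comparison point of $u_i\in[o_i,x]_h$. The first, $r\le\len([o_i,u_i]_h)$, is immediate since a path is at least as long as the distance between its endpoints. The second, $d(o_i,x)-r\le\len([u_i,x]_h)$, follows from the triangle inequality $d(o_i,x)\le d(o_i,u_i)+d(u_i,x)=r+d(u_i,x)\le r+\len([u_i,x]_h)$. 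Hypothesis (2) gives $r\le d(o_i,x)$, so $\bu_i$ indeed lies on the segment $[\bo_i,\bx]$.

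With both comparison points pinned at distance $r$ from their vertices, the estimate $|\bu_1-\bu_2|\le|\bo_1-\bo_2|$ reduces to a planar fact. Writing $d_i=d(o_i,x)$ and letting $\theta$ be the angle of the comparison triangle at $\bx$, the law of cosines (applied at $\bx$, where $\bu_i$ sits at distance $d_i-r$) gives the identity $|\bu_1-\bu_2|^2-|\bo_1-\bo_2|^2=2r(1-\cos\theta)(r-d_1-d_2)$, whose right-hand side is nonpositive because $r\le\min(d_1,d_2)\le d_1+d_2$ and $1-\cos\theta\ge0$. Feeding this back into the rough CAT(0) inequality yields $d(u_1,u_2)\le d(o_1,o_2)+C$, as claimed.

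The only genuinely delicate point—and the step I would expect to need the most care—is the admissibility of the common-distance choice of comparison points; a naive argument comparing $u_1$ and $u_2$ at equal \emph{parameter} or equal \emph{arclength} along the paths would incur an additional error of order $h$ and fail to produce the sharp constant $C+d(o_1,o_2)$. The observation that lets one match the two distances exactly is precisely the bound $d(o_i,x)-\len([u_i,x]_h)\le r$ coming from the triangle inequality, which shows $r$ lies in the admissible range for both comparison points simultaneously. Once that is in place, the remaining planar computation is routine.
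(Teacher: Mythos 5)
Your proposal is correct and follows essentially the same route as the paper: form the $h$-short triangle on $o_1,o_2,x$ with $\g_1,\g_2$ as two of its sides, place both comparison points on $[\bo_i,\bx]$ at the common distance $d(o_1,u_1)=d(o_2,u_2)$ from their vertices, and reduce everything to the planar inequality $|\bu_1-\bu_2|\le|\bo_1-\bo_2|$. The only differences are cosmetic: you prove the planar fact via an exact law-of-cosines identity where the paper uses a monotonicity argument obtained by differentiating the cosine rule, and you spell out the admissibility of the comparison points, a step the paper dismisses as ``readily verified.''
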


\begin{proof}
Let $T:=T(o_1,o_2,x)$ be a $h$-short triangle such that $\g_1,\g_2$ are two
of its sides, and let $\bT:=T(\bo_1,\bo_2,\bx)$ be a comparison triangle.
Write $\d:=d(o_1,u_1)=d(o_2,u_2)$. For $i=1,2$, let $\bu_i$ be the point on
$[\bo_i,\bx]$ with $|\bo_i-\bu_i|=\d$. We claim that $|\bu_1-\bu_2|\le
|\bo_1-\bo_2|$. Since it is readily verified that $\bu_i$ is a
$h$-comparison point for $u_i$, the desired conclusion follows by applying
the $C$-rCAT(0) condition to this claimed inequality.

If the sidelengths $a,b,c$ of a Euclidean triangle $T(t)$ are changing with
time $t$ in such a way that $b'(t)=c'(t)=1$, and if the angle $A$ opposite
the side of length $a$ is constant, then differentiating the cosine rule
gives
$$ 2aa' = 2b + 2c - 2(b+c)\cos A $$
which immediately gives $a'(t)\ge 0$. Applying this fact with $b$ increasing
from $|\bu_1-\bx|$ to $|\bo_1-\bx|$, $c$ increasing from $|\bu_2-\bx|$ to
$|\bo_2-\bx|$, and $a$ changing from $|\bu_1-\bu_2|$ to $|\bo_1-\bo_2|$, the
claim follows.
\end{proof}

\begin{rem}\label{R:o1o2}
If we replace assumption (c) with the assumption that
$\len(\la_1)=\len(\la_2)$, where $\la_i$ is the subpath of $\g_i$ from $o_i$
to $u_i$, then we can take as comparison points the points $\bu_i$ on
$[o_i,u_i]$ for which $|\bo_i-\bu_i|=\len(\la_i)$, $i=1,2$. The conclusion
of Lemma~\ref{L:o1o2} now follows in the same manner.
\end{rem}

\begin{lem}\label{L:x1x2}
Suppose $o,u_1,u_2,x_1,x_2$ are points in a $C$-rCAT(0) space $X$ and that:
\begin{enumerate}
\item there exists $s\ge 0$ such that for $i=1,2$, $u_i=\g_i(s)$ for
    some unit speed path $\g_i$ of length $L_i$ from $o$ to $x_i$;
\item for $i=1,2$, $\g_i$ is $h$-short, where $h:=H(o,x_1,x_2)$ and $H$
    is as defined in \eqref{E:H}.
\end{enumerate}
Then $d(u_1,u_2)\le C+d(x_1,x_2)$.
\end{lem}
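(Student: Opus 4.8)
The plan is to run an argument parallel to the proof of Lemma~\ref{L:o1o2}, but with the roles of origin and endpoint interchanged: here $o$ is the common vertex while $x_1,x_2$ vary, and $u_1,u_2$ lie at a common arclength $s$ from $o$. First I would form a $h$-short triangle $T:=T(o,x_1,x_2)$ having $\g_1,\g_2$ as two of its sides, completed by any $h$-short segment $[x_1,x_2]_h$; the hypothesis $h=H(o,x_1,x_2)$ is exactly what is needed for the $C$-rCAT(0) condition to apply to $T$. Let $\bT:=T(\bo,\bx_1,\bx_2)$ be a comparison triangle in $\R^2$, write $\ell_i:=d(o,x_i)=|\bo-\bx_i|$, and let $\t$ be the angle of $\bT$ at $\bo$.

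Next I would choose the comparison points. Since $\g_i$ has unit speed and $u_i=\g_i(s)$, we have $\len([o,u_i]_h)=s$ and $\len([u_i,x_i]_h)=L_i-s$. I would place $\bu_i$ on $[\bo,\bx_i]$ at distance $t_i:=s\mino\ell_i$ from $\bo$; the cap at $\ell_i$ is needed because $\g_i$ is only $h$-short, so $s$ may exceed $\ell_i$ (by at most $h$) while still satisfying $s\le L_i$. A short verification using $\ell_i\le L_i$ then confirms that $\bu_i$ is a genuine $h$-comparison point for $u_i$: indeed $|\bo-\bu_i|=t_i\le s$, and $|\bu_i-\bx_i|=\ell_i-t_i=(\ell_i-s)\maxo 0\le L_i-s$.

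The crux is the purely Euclidean inequality $|\bu_1-\bu_2|\le|\bx_1-\bx_2|=d(x_1,x_2)$. By the law of cosines this reads $t_1^2+t_2^2-2t_1t_2\cos\t\le\ell_1^2+\ell_2^2-2\ell_1\ell_2\cos\t$. In the principal case $s\le\ell_1\mino\ell_2$ we have $t_1=t_2=s$, and the right-hand side minus the left-hand side equals $(\ell_1-\ell_2)^2+2(\ell_1\ell_2-s^2)(1-\cos\t)$, which is nonnegative since $s^2\le\ell_1\ell_2$ and $\cos\t\le 1$; the remaining cases, where $s$ exceeds one or both of $\ell_1,\ell_2$ so that one or both of the $\bu_i$ coincide with $\bx_i$, are dispatched by an elementary factorization of the same difference. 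Applying the $C$-rCAT(0) condition to the comparison points $\bu_1,\bu_2$ then yields $d(u_1,u_2)\le|\bu_1-\bu_2|+C\le d(x_1,x_2)+C$, as desired.

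I would flag the Euclidean step as the main obstacle, and in particular the point that distinguishes this lemma from Lemma~\ref{L:o1o2}: there the equal-distance hypothesis made the two relevant side-increments equal, so the monotonicity computation from that proof, $2aa'=2b+2c-2(b+c)\cos A$, applied verbatim. Here the two sides $\bo\bx_1$ and $\bo\bx_2$ emanating from the common vertex $\bo$ have different lengths $\ell_1\ne\ell_2$ in general, so equal arclength $s$ does not translate into equal increments and that monotonicity argument does not transfer; the direct law-of-cosines estimate above replaces it.
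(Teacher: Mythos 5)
Your proof is correct, and its skeleton is the paper's: complete $\g_1,\g_2$ to an $h$-short triangle $T(o,x_1,x_2)$, pick $h$-comparison points on the sides $[\bo,\bx_i]$ of a comparison triangle, prove a planar inequality, and invoke the $C$-rCAT(0) condition. The genuine difference is in the planar step. The paper reduces by symmetry to $\ell_1\le\ell_2$, notes that $|\bu_1-\bu_2|$ is linear in $s$ so the extreme case is $\bu_1=\bx_1$, and then finishes with a monotonicity-by-calculus argument in the spirit of Lemma~\ref{L:o1o2} (after the reduction only one side grows, so the relevant computation is $2aa'=2c-2b\cos A\ge 0$, valid because $c\ge b$ throughout the motion). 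Your direct law-of-cosines computation with the identity $(\ell_1-\ell_2)^2+2(\ell_1\ell_2-s^2)(1-\cos\t)$ for the difference of squares replaces both the linearity reduction and the calculus in one stroke, and is more self-contained; your closing observation is also accurate, since the paper's computation with $b'=c'=1$ cannot be applied verbatim here and must be replaced by the one-sided variant. A further merit of your write-up is the explicit choice $t_i=s\mino\ell_i$ of comparison points and the verification that they are legitimate $h$-comparison points: the paper states its planar claim only for points at distance exactly $s$ on both segments, implicitly requiring $s\le\ell_1\mino\ell_2$, and glosses over the possibility that $s$ exceeds $\ell_i$ (by up to $h$, since the sides are only $h$-short); your capped choice and the easy ``remaining cases'' factorization handle this cleanly.
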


\begin{proof} As for Lemma~\ref{L:o1o2}, the proof reduces to an estimate for
planar triangles. Specifically, we claim that if $T=T(o,x_1,x_2)$ is a
triangle in the Euclidean plane and if $u_i\in[o,x_i]$ with $|o-u_i|=s$ for
$i=1,2$, then $|u_1-u_2|\le |x_1-x_2|$. By symmetry, it suffices to
establish this claim for the case $|o-x_1|\le|o-x_2|$. Since $|u_1-u_2|$ is
a linear function of $s$, we may as well assume that $u_1=x_1$. By
considering a triangle $T(t)$ as in the proof of Lemma~\ref{L:o1o2}, this
again follows by calculus.
\end{proof}


\section{Bouquet constructions}\label{S:bouquet}

In this section we first introduce the various concepts required to define
several variant {\it bouquet boundaries} of an rCAT(0) space $X$. We then
show that all of these notions can be identified in a natural way with each
other. Next we explore the relationship between the bouquet boundary and the
ideal boundary, showing that they can be naturally identified in a complete
CAT(0) space. Finally, we explore the relationship between ends and the
bouquet boundary, and prove Theorem~\ref{T:end intro}.

As motivation for the bouquet boundary, suppose $\g:[0,\infty)\to X$  is a
geodesic ray parametrized by arclength in an rCAT(0) space $X$, with
$\g(0)=o$. One of the basic properties of a complete CAT(0) space that we
would like to emulate is that if $o'\in X$ is any other point, then there is
a unit speed geodesic ray $\g':[0,\infty)\to X$ with $\g'(0)=o'$ and
$\sup_{t\ge 0} d(\g(t),\g'(t))<\infty$. The standard proof of this involves
taking a sequence of geodesic segments from $o'$ to $\g(t_n)$ where
$t_n\uparrow\infty$. The resulting unit speed paths $\g_n:[0,L_n]\to X$ are
such that $d(\g_m(t),\g_n(t))$ is uniformly bounded for all $m,n\in\N$ and
all $0\le t\le L_m\mino L_n$. Moreover if we fix $t$ and pick $m,n\ge n_0$,
then this uniform bound on $d(\g_m(t),\g_n(t))$ tends to $0$, and $L_m\mino
L_n\to\infty$ as $n_0\to\infty$. Defining $\g'(t)=\lim_{n\to\infty}\g_n(t)$
for all $t\ge0$ gives a geodesic ray $\g'$ from $o'$.

If $X$ is merely rCAT(0) and if we use $h_n$-short paths $\g_n$ for some
appropriately small positive numbers $h_n$, then we can similarly derive a
uniform bound on $d(\g_m(t),\g_n(t))$. However the rCAT(0) condition does
not imply that this bound tends to $0$ for $m,n\ge n_0\to\infty$, so
completeness is of no use. To overcome this obstacle, we discard geodesic
rays and instead construct a boundary using sequences of paths such as
$(\g_n)$ above. The key features of $(\g_n)$ are that all segments $\g_n$
have a common origin, their lengths are increasing and tending to infinity
(this may require that we take a subsequence above), and
$d(\g_m(t),\g_n(t))$ is uniformly bounded whenever it is defined.

Bouquets $(\g_n)$ with a uniform bound on $d(\g_m(t),\g_n(t))$ are the most
natural concept arising from the above considerations, and are closely
related to the {\it roads} that V\"ais\"al\"a \cite{Va} introduced in the
context of Gromov hyperbolic spaces. There are two useful variants of this
concept that lead to a naturally equivalent bouquet boundary. The first is a
loose bouquet, where the bound on $d(\g_m(t),\g_n(t))$ is not uniform, but
is instead allowed to grow more slowly than the smaller of the two distances
$d(o',\g_m(t))$ and $d(o',\g_n(t))$; such loose bouquets are needed in the
next section to investigate sequential versions of the bouquet boundary. The
second equivalent notion is a standard bouquet, a tighter notion than a
bouquet which is needed to define the bouquet topology in
Section~\ref{S:top}.

\subsection{Bouquets: Definitions and basics}

\begin{dfn}\label{D:little o}
A {\it little-o function} is a monotonically increasing function
$\d:[0,\infty)\to[0,\infty)$ such that $\d(t)/t\to 0$ as $t\to\infty$, and
$|\d(s)-\d(t)|\le |s-t|$ for all $s,t>0$.
\end{dfn}

\begin{dfn}\label{D:short}
A {\it short function} is a decreasing function $D:[0,\infty)\to(0,1]$,
satisfying $D(t)\le 1/t$ for $t>1$, and $|D(s)-D(t)|\le|s-t|$ for all
$s,t>0$.
\end{dfn}

The $1$-Lipschitz condition forms part of both above definitions for
technical reasons: in the case of Definition~\ref{D:little o}, it is used in
the next section to force $\d(\len(\g))$ to be close to $\d(d(o,x))$ when
$\g$ is a $1$-short path from $o$ to $x$, while in the case of
Definition~\ref{D:short}, it ensures that subsegments of a $D$-short segment
are $D$-short (see Definition~\ref{D:D short} and Lemma~\ref{L:subseg}).

Note that the Lipschitz assumption in Definition~\ref{D:short} does not
restrict the decay rate of a short function: if $E:[0,\infty)\to(0,1]$ is
any decreasing function such that $E(t)\le 1/t$ for $t>1$, and
$D:[0,\infty)\to (0,1]$ is the function which is affine on each interval
$[n-1,n]$, $n\in\N$, and defined by the equation $D(n-1)=E(n)$, then $D$ is
a short function. Similarly the Lipschitz assumption in
Definition~\ref{D:little o} puts no restriction on how slowly or quickly
$\d$ increases, among the class of monotonically increasing functions
satisfying $\d(t)/t\to 0$, since we could define such a $\d$ by piecewise
linear interpolation of the values of a function $f$ at $0$ and $2^{n-1}A$,
$n\in\N$, where $f$ is any given non-negative function satisfying $f(t)/t\to
0$ as $t\to\infty$ and $A>0$ is so large that $f(s)/s\le 1/2$ for $t\ge A$.

\begin{dfn}\label{D:D short}
Given a short function $D$, a segment from $x$ to $y$ is said to be {\it
$D$-short} if it is $h$-short for $h=D(d(x,y))$.%
\end{dfn}

Note that, although the two concepts of $h$-short and $D$-short segments
create a potential ambiguity of terminology, the context will always
indicate which sense of ``short`'' we mean, and we also use the convention
of using capital or lower-case letters to indicate whether we are talking
about a short segment in this new sense or the old sense, respectively.

\begin{lem}\label{L:subseg}
Every subsegment of a $D$-short segment is a $D$-short segment.
\end{lem}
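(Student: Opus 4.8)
The plan is to reduce the statement to a single inequality about lengths and then dispatch it by a case analysis that pivots on comparing the chord length of the subsegment with that of the full segment. Fix a $D$-short segment $\g$ from $x$ to $y$, parametrized by arclength on $[0,L]$ with $L=\len(\g)$, so that $L\le a+D(a)$ where $a:=d(x,y)$ (Definition~\ref{D:D short}). A subsegment is $\g|_{[s_1,s_2]}$ running from $z:=\g(s_1)$ to $w:=\g(s_2)$; write $m:=s_2-s_1$ for its length and $\ell:=d(z,w)$ for its chord length. The goal is exactly the $D$-shortness of this subsegment, namely $m\le \ell+D(\ell)$.

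First I would record two upper bounds for $m$. The \emph{surplus bound} asserts that the excess $m-\ell$ of the subsegment is no larger than the excess $L-a\le D(a)$ of the whole: splitting $L=s_1+m+(L-s_2)$, bounding the two outer pieces below by $\len(\g|_{[0,s_1]})=s_1\ge d(x,z)$ and $\len(\g|_{[s_2,L]})=L-s_2\ge d(w,y)$, and applying the triangle inequality $a\le d(x,z)+\ell+d(w,y)$, one gets $L-a\ge m-\ell$, hence $m\le \ell+D(a)$. The \emph{crude bound} is simply $m\le L\le a+D(a)$, since a subsegment is no longer than the whole path.

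Now split on the sign of $\ell-a$. If $\ell\le a$, the surplus bound gives $m\le \ell+D(a)$, and since $D$ is decreasing and $\ell\le a$ we have $D(a)\le D(\ell)$, so $m\le \ell+D(\ell)$. The delicate case — and the main obstacle — is $\ell>a$, which genuinely can occur because a short path is permitted to overshoot, so that two interior points may be farther apart than the endpoints; here monotonicity of $D$ runs the wrong way and the surplus bound alone is useless. This is precisely where the $1$-Lipschitz hypothesis in Definition~\ref{D:short} is needed: from $\ell>a$ it yields $D(a)-D(\ell)\le |a-\ell|=\ell-a$, i.e.\ $a+D(a)\le \ell+D(\ell)$. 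Combining this with the crude bound $m\le a+D(a)$ gives $m\le \ell+D(\ell)$, completing the argument.

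The only points requiring care are the degenerate configurations (e.g.\ $z=w$, giving $\ell=0$, or the subsegment equal to all of $\g$), but these fall under the case $\ell\le a$ and are handled uniformly by the surplus bound, so no separate treatment is needed.
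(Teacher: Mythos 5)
Your proof is correct and takes essentially the same route as the paper's: the identical case split on whether the chord $d(z,w)$ of the subsegment exceeds $d(x,y)$, with the triangle-inequality (surplus) bound plus monotonicity of $D$ handling the case $d(z,w)\le d(x,y)$, and the crude bound $m\le L\le d(x,y)+D(d(x,y))$ plus the $1$-Lipschitz property of $D$ handling the overshoot case. The differences are purely presentational (you isolate the two bounds up front rather than interleaving them), so nothing further is needed.
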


\begin{proof}
Suppose $\g:[0,L]\to X$ is a $D$-short segment from $x$ to $y$ parametrized
by arclength, and let $h:=D(d(x,y))$, so that $L\le d(x,y)+h$. Let
$z_i:=\g(t_i)$ for some $t_i\in[0,L]$, $i=1,2$, let $\la$ be the associated
subpath of $\g$, and let $M=|t_1-t_2|$ be the length of $\la$. A subpath of
a $h$-short segment is a $h$-short segment (just use the triangle
inequality!), and so $\la$ is a $D$-short segment if $d(z_1,z_2)\le d(x,y)$.

If instead $\d:=d(z_1,z_2)-d(x,y)>0$, then
$$ M-d(z_1,z_2) \le L-d(z_1,z_2) = L-d(x,y)-\d \le h-\d $$
while $D(d(z_2,z_2))\ge h-\d$ by the Lipschitz property, and so $\la$ is
again $D$-short.
\end{proof}

We are now ready to define our three variants of bouquets.

\begin{dfn}\label{D:loose bouquet}
Suppose $X$ is an rCAT(0) space. Let $\d$ be a little-o function, $D$
a short function, and $o\in X$. A \emph{loose $(\d,D)$-bouquet from $o$} is
a sequence $\b$ of unit speed $D$-short segments $\b_n:[0,L_n]\to X$, $n \in
\N$, with the following properties:
\begin{itemize}
\item[(i)] $\b_n(0)=o$ for all $n\in\N$; we call $o$ the \emph{initial
    point} of $\b$.
\item[(ii)] $(L_n)$ is monotonically increasing and has limit infinity.
\item[(iii)] $d(\b_m(t),\b_n(t))\le \d(t)$, for all $0\le t\le L_m$,
    $m\le n$, $m,n\in\N$.
\end{itemize}
We call the points $\b_n(L_n)$ the {\it tips of $\b$}.
\end{dfn}

\begin{dfn}\label{D:bouquet}
A {\it $(c,D)$-bouquet from $o$} is a loose $(\d,D)$-bouquet from $o$ for
some constant function $\d(t)\equiv c\ge 0$. A {\it standard bouquet from
$o$} in a $C$-rCAT(0) space means a $(2C+2,D)$-bouquet $\b$ from $o$, with
$D(t)=1/(1\maxo(2t))$, $t\ge 0$, and $L_n=(2C+2)^n$, $n\in\N$.
\end{dfn}

Note that the definition of a standard bouquet depends on the $C$-parameter
of the ambient rCAT(0) space. In this definition, the precise choice
$c=2C+2$ is a mere convenience, but choosing some fixed $c>2C$ is important
for the topological arguments in Section~\ref{S:top}. As for $L_n$, it is
only important that we choose some sequence increasing to infinity but
$L_n=(2C+2)^n$ is technically convenient.

We often speak of {\it (loose/standard) bouquets}, dropping references to
the initial point $o$ and parameters $c,\d,H$, if these are unimportant. We
denote by $\nbq$, $\lbq$, $\sbq$, the sets of all bouquets, loose bouquets,
or standard bouquets (with basepoint $o$), respectively, so
$\sbq\subset\nbq\subset\lbq$.

\begin{dfn}\label{D:loose-bouquet-bdy}
Let $\b^i=(\b_n^i)_{n=1}^\infty$, $i=1,2$, be a pair of loose bouquets in an
rCAT(0) space $X$, where $\b_n^i:[0,L_n^i]\to X$. Then $\b^1$ and $\b^2$ are
said to be \emph{loosely asymptotic}, denoted $\b^1 \sim_\rL \b^2$, if there
is a little-o function $\d$ such that
$$
d(\b_m^1(t),\b_n^2(t))\le \d(t)\,, \qquad
  0\le t\le L_m^1\mino L_n^2\,.
$$
The equivalence class of loose bouquets loosely asymptotic to $\b$ will be
denoted by $[\b]_{\rL}$.
\end{dfn}

\begin{dfn}\label{D:bouquet-bdy}
Bouquets $\b^1$ and $\b^2$ are said to be \emph{asymptotic}, denoted $\b^1
\sim \b^2$, if they are loosely asymptotic for some constant little-o
function $\d(t)\equiv K\ge 0$. The equivalence class of bouquets asymptotic
to $\b$ will be denoted by $[\b]$.
\end{dfn}

\begin{dfn}\label{D:bouquet-bdy-notation}
Assuming $X$ is an rCAT(0) space, we call $\lbbd X:=\lbq/\sym_\rL$ the {\it
loose bouquet boundary of $X$}, and $\bbd X:=\nbq/\sym$ the {\it bouquet
boundary of $X$}.
\end{dfn}

Other variants of interest are $\nbq/\sym_\rL$, $\sbq/\sym_\rL$, and
$\sbq/\sym$. We will see that all five variants lead to naturally equivalent
notions of a boundary at infinity (Corollary~\ref{C:5 equiv}), that they are
independent of the choice of basepoint $o$ (Corollaries \ref{C:std asymp}
and \ref{C:5 equiv}), and that they generalize the ideal boundary of a
complete CAT(0) space (Theorem~\ref{T:CAT0 bij}).

It is clear that $\sim$ or $\sim_\rL$ is an equivalence relation in each of
the above five variants. Note that $\sim$ is not an equivalence relation on
$\lbq$ since the notion of asymptoticity must be at least as loose as the
bound on $d(\b_m(t),\b_n(t))$ in order to have an equivalence relation; easy
examples can be found in the Euclidean plane.

Let us pause to make a few remarks relating to the above definitions. First,
note that if $\b^1,\b^2$ are loose $(\d_1,D)$-bouquets and if
$d(\b_m^1(t),\b_n^2(t))\le \d(t)$ for one particular choice of $m,n$, then
it follows that $d(\b_m^1(t),\b_n^2(t))\le \d(t)+2\d_1(t)$ for all allowable
choices of $m,n$. So if we do not care about the particular little-o
function $\d$, we can write the loose asymptoticity condition as
$$ d(\b^1(t),\b^2(t))\le \d(t)\,, \qquad 0\le t<\infty\,, $$
where $\b^i(t)$ can be interpreted as $\b_n^i(t)$ for any single $n=n(t)$
for which $\b_n^i(t)$ is defined. Using Lemma~\ref{L:rough cx}, this last
inequality for fixed $t$ implies that
$$ d(\b^1(s),\b^2(s))\le \d(t)+2C\,, \qquad 0\le s\le t\,, $$
and so loose asymptoticity of $\b^1$ and $\b^2$ is equivalent to the
formally weaker condition: there exists a little-o function $\d$ such that
$$ \liminf_{t\to\infty}\frac{d(\b^1(t),\b^2(t))}{\d(t)}\le 1\,. $$

It follows routinely from the triangle inequality and the fact that we are
using $1$-short segments that the bound $d(\b_m(t),\b_n(t))\le \c$ in the
definition of a bouquet is quantitatively equivalent to assuming the
seemingly weaker condition $d_H(\b_m,\b_n|_{[0,L_m]})\le c'$, where $d_H$
indicates Hausdorff distance. In fact the latter condition for a given $c'$
implies the former condition for $c=2c'+1$. Similarly the uniform bound on
$d(\b_m^1(t),\b_n^2(t))$ in the definition of asymptotic bouquets is
quantitatively equivalent to a uniform bound on the Hausdorff distance
between $\b_m^1$ and $\b_n^2|_{[0,L_m^1]}$, assuming without loss of
generality that $L_m^1\le L_n^2$. Similar comments apply to the definitions
of loose bouquets and loose asymptoticity.

\begin{dfn}
Suppose $\a:=(\a_n)_{n=1}^\infty$ is a sequence of numbers, with $0<\a_n\le
1$, $n\in\N$, and suppose $\b=(\b_n)$ is a (loose) bouquet. The {\it
$\a$-pruning of $\b$} is $\b'=(\b_n')$, where $\b_n'=\b_n|_{[0,\a_nL_n]}$.
If $\a$ is a constant sequence $(a)$, we may refer to the {\it $a$-pruning
of $\b$} in place of the $\a$-pruning of $\b$.
\end{dfn}

We now make three simple observations about ways to get (loose) bouquets
(loosely) asymptotic to a given bouquet; we write ``equivalent'' in all
cases instead of ``(loosely) asymptotic''. The last of these three
observations is the only one where we needed to use the rCAT(0) condition,
specifically in the form of Lemma~\ref{L:rough cx}.

\begin{obs}\label{O:b subs}
Every subsequence of a (loose) bouquet $\b$ is a (loose) bouquet equivalent
to $\b$; we call such a subsequence a {\it (loose) sub-bouquet}.
\end{obs}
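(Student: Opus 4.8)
The plan is to check the defining conditions directly and then to observe that the required asymptoticity is essentially free. Let $\b=(\b_n)$ be a loose $(\d,D)$-bouquet from $o$, with $\b_n:[0,L_n]\to X$, and let $\b'=(\b'_k)_{k\in\N}$ be the subsequence given by $\b'_k:=\b_{n_k}$ for some strictly increasing sequence $(n_k)$; write $L'_k:=L_{n_k}$. First I would verify that $\b'$ is a loose $(\d,D)$-bouquet. Condition (i) of Definition~\ref{D:loose bouquet} is immediate, since each $\b'_k=\b_{n_k}$ is a unit speed $D$-short segment with $\b'_k(0)=o$. Condition (ii) holds because any subsequence of a monotonically increasing sequence with limit infinity is itself monotonically increasing with limit infinity. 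For condition (iii), if $k\le l$ then $n_k\le n_l$, and applying condition (iii) for $\b$ to the pair $(n_k,n_l)$ yields $d(\b'_k(t),\b'_l(t))=d(\b_{n_k}(t),\b_{n_l}(t))\le\d(t)$ for $0\le t\le L_{n_k}=L'_k$.

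It remains to show that $\b\sym_\rL\b'$, and here the key observation---the reason this is merely an observation---is that condition (iii) bounds $d(\b_m(t),\b_n(t))$ by $\d(t)$ for \emph{every} pair of indices with $m\le n$, not only for consecutive ones. Given arbitrary $m$ and $k$, set $p:=m\mino n_k$ and $q:=m\maxo n_k$, so that $p\le q$ and the unordered pair $\{\b_m,\b'_k\}$ equals $\{\b_p,\b_q\}$. Since $(L_n)$ is increasing, $L_p=L_m\mino L_{n_k}$, and condition (iii) for $\b$ gives $d(\b_m(t),\b'_k(t))=d(\b_p(t),\b_q(t))\le\d(t)$ for all $0\le t\le L_p=L_m\mino L'_k$. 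This is exactly the loose asymptoticity inequality of Definition~\ref{D:loose-bouquet-bdy} with the same little-o function $\d$, so $\b\sym_\rL\b'$.

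Finally, for the non-loose assertion, if $\b$ is a bouquet then $\d$ is a constant function $\d(t)\equiv c\ge 0$, which is in particular a constant little-o function; the identical argument then shows that $\b'$ is a $(c,D)$-bouquet asymptotic to $\b$ in the sense of Definition~\ref{D:bouquet-bdy}. I do not expect any real obstacle here: no appeal to the rCAT(0) condition is needed (consistent with the remark preceding the statement), and the entire content is the recognition that the uniform-pair form of condition (iii) simultaneously delivers condition (iii) for the subsequence and the asymptoticity bound, both with the unchanged function $\d$.
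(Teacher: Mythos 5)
Your proof is correct and is exactly the direct verification the paper intends: the Observation is stated without proof precisely because, as you note, condition (iii) of Definition~\ref{D:loose bouquet} for arbitrary pairs of indices simultaneously yields the sub-bouquet conditions and the (loose) asymptoticity bound with the unchanged function $\d$, and no appeal to the rCAT(0) condition (Lemma~\ref{L:rough cx}) is needed, in agreement with the remark preceding the statement.
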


\begin{obs}
If $\b$ is a (loose) bouquet, then an $\a$-pruning of $\b$ is also a (loose)
bouquet as long as the sequence $(\a_nL_n)$ is increasing and has limit
infinity. Whenever the $\a$-pruning of $\b$ is a (loose) bouquet, it is
equivalent to $\b$. In particular if $0<a<1$, then the $a$-pruning of a
(loose) bouquet $\b$ is always a (loose) bouquet equivalent to $\b$.
\end{obs}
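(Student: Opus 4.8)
The plan is to verify the three constituent assertions in turn, each by a direct appeal to the definitions. Write $L_n' := \a_n L_n$ for the lengths of the pruned segments $\b_n' = \b_n|_{[0,L_n']}$, and let $\d$ be the little-o function witnessing that $\b$ is a loose bouquet (in the non-loose case $\d\equiv c$ is constant).

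First I would check that $\b'$ is a (loose) bouquet under the standing hypothesis that $(L_n')$ is increasing with limit infinity. Property (i) of Definition~\ref{D:loose bouquet} is immediate, since $\a_n>0$ gives $\b_n'(0)=\b_n(0)=o$. Property (ii) is precisely the hypothesis on $(\a_n L_n)$. Each $\b_n'$ is a unit speed segment, being a restriction of the unit speed segment $\b_n$, and it is $D$-short by Lemma~\ref{L:subseg}. For property (iii), note that $L_m'\le L_m$, so for $m\le n$ and $0\le t\le L_m'$ we have $\b_m'(t)=\b_m(t)$ and $\b_n'(t)=\b_n(t)$, whence $d(\b_m'(t),\b_n'(t))=d(\b_m(t),\b_n(t))\le\d(t)$ by property (iii) for $\b$; the inclusion $[0,L_m']\subseteq[0,L_m]$ means the bound is inherited on a smaller range.

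Next I would establish the equivalence of $\b'$ and $\b$. Since $\b_m'(t)=\b_m(t)$ on $[0,L_m']$, for any indices $m,n$ and any $0\le t\le L_m'\mino L_n$ we have $d(\b_m'(t),\b_n(t))=d(\b_m(t),\b_n(t))$. The only bookkeeping is the ordering of indices required by Definition~\ref{D:loose-bouquet-bdy}: if $m\le n$ then $L_m'\mino L_n\le L_m$ and property (iii) for $\b$ gives the bound $\d(t)$ directly; if $m>n$ then $L_m'\mino L_n\le L_n$, and the same property applied with the roles of $m$ and $n$ reversed gives $d(\b_n(t),\b_m(t))\le\d(t)$. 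In both cases $d(\b_m'(t),\b_n(t))\le\d(t)$, so $\b'\sim_\rL\b$; in the non-loose case $\d\equiv c$ is constant, so in fact $\b'\sim\b$.

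Finally, the special case $0<a<1$ follows by taking $\a_n\equiv a$: then $L_n'=aL_n$ is increasing with limit infinity because $L_n$ is, so the hypothesis of the first assertion holds and the preceding two paragraphs yield that $\b'$ is a (loose) bouquet equivalent to $\b$. I do not expect a genuine obstacle in this argument, as it is a routine unwinding of the definitions; the only two points requiring care are the invocation of Lemma~\ref{L:subseg} to confirm that the truncated segments remain $D$-short, and the symmetric use of property (iii) to cover both orderings of the indices in the asymptoticity estimate.
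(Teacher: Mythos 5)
Your proof is correct and matches the paper's intent: the statement appears there as an unproved ``simple observation,'' with the paper noting only that (unlike the third observation) it requires no use of the rCAT(0) condition, and your argument is exactly the intended routine verification from Definitions~\ref{D:loose bouquet} and~\ref{D:loose-bouquet-bdy} together with Lemma~\ref{L:subseg} for $D$-shortness of the truncated segments. The two points you flag as needing care --- shortness of subsegments and the symmetric treatment of the index ordering in the asymptoticity bound --- are indeed the only nontrivial details, and you handle both correctly.
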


\begin{obs} If a (loose) bouquet $\b'$ from $o'$ has the same sequence of
tips as a (loose) bouquet $\b$ from $o$, then $\b$ and $\b'$ are (loosely)
asymptotic.
\end{obs}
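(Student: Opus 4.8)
The plan is to establish the required bound first for the index-matched pairing $\b_n$ against $\b_n'$, and then to upgrade it to all pairings by the triangle inequality, exactly in the spirit of the remark preceding this observation.

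Write $x_n$ for the common $n$-th tip, so that $\b_n$ is a unit speed $D$-short segment from $o$ to $x_n$ of length $L_n$, while $\b_n'$ is a unit speed $D'$-short segment from $o'$ to the same point $x_n$ of length $L_n'$. First I would reparametrize each of these two segments at constant speed over $[0,1]$ and apply the rough convexity Lemma~\ref{L:rough cx} with $a_1=o$, $a_2=o'$, and the common endpoint $b_1=b_2=x_n$. The shortness hypothesis $h_i\le 1/(1\maxo d(a_i,b_i))$ is precisely the $D$-short (resp.\ $D'$-short) condition, since a short function satisfies $D(t)\le 1/(1\maxo t)$; and because $b_1=b_2$ we may take the improved constant $C'=C$. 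This yields
$$ d\bigl(\b_n(sL_n),\b_n'(sL_n')\bigr)\le (1-s)\,d(o,o')+C\le d(o,o')+C\,, \qquad s\in[0,1]\,. $$

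The step I expect to be fiddly is converting this ``equal fraction'' estimate into the ``equal arclength'' estimate that Definitions~\ref{D:loose-bouquet-bdy} and~\ref{D:bouquet-bdy} actually require. Since any $D$-short segment has length between the distance of its endpoints and that distance plus one, we have $d(o,x_n)\le L_n\le d(o,x_n)+1$ and likewise for $L_n'$, so $|L_n-L_n'|\le d(o,o')+1$. For $0\le t\le L_n\mino L_n'$, setting $s=t/L_n$ gives $\b_n(sL_n)=\b_n(t)$, while the companion point $\b_n'(sL_n')=\b_n'(tL_n'/L_n)$ lies within arclength $t\,|L_n'-L_n|/L_n\le d(o,o')+1$ of $\b_n'(t)$ along the unit speed, hence $1$-Lipschitz, segment $\b_n'$. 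The triangle inequality then delivers a constant bound
$$ d(\b_n(t),\b_n'(t))\le C+2\,d(o,o')+1=:K\,, \qquad 0\le t\le L_n\mino L_n'\,. $$

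Finally I would promote this to arbitrary pairings. Given indices $m,n$ and $t$ for which $\b_m(t)$ and $\b_n'(t)$ are both defined, I would pick any index $k$ exceeding $m$ and $n$ with $t\le L_k\mino L_k'$ and estimate $d(\b_m(t),\b_n'(t))$ by inserting $\b_k(t)$ and $\b_k'(t)$: the two outer terms are controlled by the internal spread functions of $\b$ and $\b'$, and the middle term by $K$. For ordinary bouquets these spreads are constants, so the total is constant and $\b\sim\b'$; for loose bouquets they are little-o functions, so the sum is again a little-o function and $\b\sim_\rL\b'$. I would stress that routing the argument through Lemma~\ref{L:rough cx} rather than the comparison-point Lemma~\ref{L:o1o2} is deliberate: rough convexity asks only that each segment be short relative to its own pair of endpoints, so it sidesteps the awkward requirement that $\b_n$ and $\b_n'$ be $H(o,o',x_n)$-short, which a general short function need not guarantee when $x_n$ is appreciably farther from one origin than the other.
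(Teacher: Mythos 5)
Your proposal is correct and follows exactly the route the paper intends: the observation is stated without proof, but the authors note that it is the one observation requiring the rCAT(0) condition ``specifically in the form of Lemma~\ref{L:rough cx}'', which is precisely the lemma your argument hinges on (with the common tip giving $b_1=b_2$ and constant $C'=C$), followed by the routine reparametrization and triangle-inequality bookkeeping. The only point worth flagging is cosmetic: your final bound $\d(t)+\d'(t)+K$ for the loose case is $2$-Lipschitz rather than $1$-Lipschitz, but as the discussion after Definition~\ref{D:short} makes clear, any monotone function that is $o(t)$ can be dominated by a genuine little-o function, so this costs nothing.
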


\subsection{Equivalence of bouquet boundary definitions}

\begin{thm}\label{T:bouquet-oo'}
Let $X$ be a $C$-rCAT(0) space and let $\b$ be a $(c,D)$-bouquet from $o$ in
$X$. If $o'\in X$, $c'>C$, and $D'$ is any short function, then there exists
a $(c',D')$-bouquet from $o'$ which is asymptotic to $\b$.
\end{thm}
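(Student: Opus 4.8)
The plan is to construct $\b'$ by joining $o'$ to the tips of $\b$ by $D'$-short segments, to read off asymptoticity from Remark~\ref{R:o1o2}, and to extract condition~(iii) from Lemma~\ref{L:x1x2}. Write $x_n:=\b_n(L_n)$ for the tips of $\b$ and set $K_0:=C+d(o,o')$. First I would, for each $n$, choose a unit-speed $D'$-short segment $\b'_n$ from $o'$ to $x_n$, of length $L'_n$. Since $\b_n$ is $D$-short we have $d(o,x_n)\ge L_n-1\to\infty$, and $d(o',x_n)\ge d(o,x_n)-d(o,o')$, so $L'_n\ge d(o',x_n)\to\infty$; passing to a subsequence (harmless by Observation~\ref{O:b subs}) I may assume $(L'_n)$ is strictly increasing. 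Then $\b'$ satisfies (i) and (ii), and each $\b'_n$ is $D'$-short by construction, so it remains to verify the asymptoticity $\b'\sim\b$ and the uniform bound of~(iii) with constant $c'$.

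For asymptoticity, $\b_n$ and $\b'_n$ are $D$- resp.\ $D'$-short segments with the common endpoint $x_n$ and origins $o,o'$, so Remark~\ref{R:o1o2} (the equal-arclength form of Lemma~\ref{L:o1o2}) gives $d(\b_n(t),\b'_n(t))\le K_0$ for $0\le t\le L_n\mino L'_n$, provided one checks that both segments are $H(o,o',x_n)$-short for large $n$; here the $D,D'$-shortness, which is at scale $\approx L_n$, is exactly what must be matched against $H(o,o',x_n)\approx 1/L_n$. Combining this with the bouquet bound $d(\b_m(t),\b_n(t))\le c$ for $m\le n$ yields $d(\b_m(t),\b'_n(t))\le c+K_0$ on the common parameter range, which is the required constant loose-asymptoticity estimate.

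The heart of the matter is condition~(iii): $d(\b'_m(t),\b'_n(t))\le c'$ for $m\le n$ and $0\le t\le L'_m$. The plan is to compare $\b'_m$ not with $\b'_n$ directly but with its initial portion $\b'_n|_{[0,L'_m]}$, which is again $D'$-short by Lemma~\ref{L:subseg}. Crucially, its endpoint $\b'_n(L'_m)$ lies at the same depth $\approx L'_m$ as $x_m$, so the shortness scale $H(o',x_m,\b'_n(L'_m))\approx 1/L'_m$ now matches the $D'$-shortness of both segments and Lemma~\ref{L:x1x2} applies. It gives $d(\b'_m(t),\b'_n(t))\le C+d(x_m,\b'_n(L'_m))$ for $0\le t\le L'_m$, reducing everything to the matching-depth separation $d(x_m,\b'_n(L'_m))$.

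I expect this last estimate to be the main obstacle, for two related reasons. First, controlling the shortness scale in Lemmas~\ref{L:o1o2} and~\ref{L:x1x2} when one segment is far shorter than the other forces all comparisons through \emph{equal} depths, and this matching has to be arranged uniformly in $m\le n$. More seriously, the naive route of bounding $d(x_m,\b'_n(L'_m))$ by the triangle inequality through $\b$ costs \emph{two} copies of the offset $K_0$, yielding only $c+2K_0$, because it does not see that $\b'_m$ and $\b'_n$ are displaced from $\b$ in the same way near $o'$; this produces a $(c'',D')$-bouquet for some $c''$, but not for a sharp $c'$ that may be taken arbitrarily close to $C$. To reach $c'$ I would instead build the $\b'_n$ telescopically, choosing each $\b'_{n+1}$ to follow $\b'_n$ closely before branching off toward the deeper tip $x_{n+1}$, so that consecutive segments nearly coincide on their common domain, and then prune by a small factor using the pruning observations. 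The additive constant that survives this procedure is precisely the rCAT(0) roughness $C$ arising from Lemma~\ref{L:x1x2}, which is why the hypothesis $c'>C$ is both natural and unavoidable.
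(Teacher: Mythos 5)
Your skeleton is essentially the paper's (short segments from $o'$ to points along $\b$, a Lemma~\ref{L:o1o2}-type comparison for asymptoticity and for condition (iii), then pruning to reach the sharp constant), but two of your steps have genuine gaps. The first is the shortness check you defer in the asymptoticity step: it actually fails, and this is precisely where the paper's construction deviates from yours. To apply Lemma~\ref{L:o1o2} (or Remark~\ref{R:o1o2}) to the triangle with vertices $o,o',x_n$, the \emph{given} side $\b_n$ must be $H(o,o',x_n)$-short, and $H(o,o',x_n)\le 1/d(o',x_n)$. But the hypothesis only guarantees that $\b_n$ is $D(d(o,x_n))$-short with $D(t)\le 1/t$, while $d(o',x_n)$ may exceed $d(o,x_n)$ by essentially $d(o,o')$. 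For instance with $D(t)=1/(1\maxo t)$, $X=\R^2$, $o'=(-1,0)$, and $\b_n$ the straight segment from $o$ to $x_n=(n,0)$, one has $D(d(o,x_n))=1/n>1/(n+1)=H(o,o',x_n)$ for \emph{every} $n$, so the rCAT(0) hypothesis gives you nothing, no matter how large $n$ is. You can improve the shortness of your own paths $\b'_n$ (and you should, e.g.\ taking $\b'_n$ to be $h$-short with $h=D'(d(o',x_n))/2$, since the same marginal failure occurs in your Lemma~\ref{L:x1x2} step), but you cannot improve the shortness of the given $\b_n$. This is exactly why the paper joins $o'$ not to the tips but to points $y_n$ with $d(o,y_n)=d(o,x_n)/2$: the subpath of $\b_n$ ending at $y_n$ is still $D(d(o,x_n))$-short, and now $D(d(o,x_n))\le 1/d(o,x_n)=1/(2d(o,y_n))\le H(o,o',y_n)$ after mild thinning. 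Remark~\ref{R:bouquet-oo'} states explicitly that tips may be used only under the extra hypothesis $D(t)\le 1/(1\maxo 2t)$. Your argument can be repaired by first replacing $\b$ with its $\frac12$-pruning, but as written this step is incorrect.

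Second, your route to the sharp constant $c'>C$ is misdirected. The pruning observations you invoke only say that a pruning of a bouquet is a bouquet \emph{equivalent} to it; they keep the same constant and reduce nothing, so they cannot by themselves convert a $(c'',D')$-bouquet into a $(c',D')$-bouquet. The mechanism that works, and is how the paper finishes, is rough convexity: given any $(c'',D')$-bouquet $\la=(\la_n)$ from $o'$ asymptotic to $\b$ (your ``naive'' $c''\approx c+2K_0$ is perfectly adequate), apply Lemma~\ref{L:rough cx} with $a_1=a_2=o'$ (so $C'=C$) to $\la_m$ and the initial portion of $\la_n$ of matching depth; since their endpoints are within roughly $c''$ of each other, the separation at parameter fraction $s$ is at most $sc''+C$, i.e.\ it grows linearly from $C$. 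Hence the $a$-pruning with $a$ small enough (the paper takes $a=(c'-C)/(1+c+C+d(o,o'))$) is a $(c',D')$-bouquet, still asymptotic to $\b$. Your telescoping is therefore unnecessary, and it is also not implementable as stated: in a $C$-rCAT(0) space, two $D'$-short segments between the \emph{same} pair of points may already lie at distance about $C$ from one another, so consecutive $\b'_n$ cannot be made to ``nearly coincide'' in any sense stronger than what the convexity bound already provides.
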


\begin{proof}
Let $\b=(\b_n)_{n=1}^\infty$ with $\b_n:[0,L_n]\to X$ as usual, and let
$x_n:=\b_n(L_n)$. 
Let $y_n:=\b_n(M_n)$ where $0\le M_n\le L_n$ is chosen so that
$d(o,y_n)=d(o,x_n)/2$. By thinning out $\b$ if necessary, we assume that
$L_1\ge 1+4d(o,o')$, and that
$$ L_{n+1}\ge L_n+4d(o,o')+3\,, \qquad n\in\N\,. $$
It follows that
$$ d(o,x_{n+1})\ge d(o,x_n)+4d(o,o')+2\,, \qquad n\in\N\,. $$
and so
$$
\left.
\begin{aligned}
d(o,y_{n+1})  &\ge d(o,y_n)+2d(o,o')+1 \\
d(o',y_{n+1}) &\ge d(o',y_n)+1
\end{aligned}
\right\}
\,, \qquad n\in\N\,.
$$
Also $d(o,x_1)\ge 4d(o,o')$, so $d(o,y_1)\ge 2d(o,o')$, and $d(o',y_1)\ge
d(o,o')$.

We choose a collection of unit speed $h_n'$-short paths $\la_n:[0,M_n']\to
X$ from $o'$ to $y_n$, where $h_n':=D'(d(o',y_n))/2$. Because
$$ d(o',y_n)\ge d(o',y_1)\ge d(o,o')\,, $$
we see that
$$ d(o,y_n)\le d(o,o')+d(o',y_n)\le 2d(o',y_n)\,, $$
and so
$$ h_n'\le 1/2d(o',y_n)\le 1/d(o,y_n)\,. $$
Since also $h_n'\le 1/d(o,o')$, we see that $h_n'\le H(o,o',y_n)$, where $H$
is as in \eqref{E:H}. The shortness parameter for $\b_n$ is
$$ D(d(o,x_n))\le 1/d(o,x_n)\le 1/2d(o,y_n)\,, $$
and we similarly see that $D(d(o,x_n))\le H(o,o',y_n)$. Because
$$ d(o',y_{n+1})\ge d(o',y_n)+1\,, $$
we see that the sequence $(M_n')$ is monotonically increasing. Also
$M_n'\to\infty$ simply because $L_n\to\infty$.

We now fix $n,m\in\N$ with $m\le n$, and choose $y_m^1$ on
$\b_n|_{[0,M_n]}$, and $y_m^2$ on $\la_n|_{[0,M_n']}$ so that
$d(y_m^1,o)=d(y_m^2,o)=d(y_m,o)$. By Lemma~\ref{L:o1o2}, we see that
$$ d(y_m^1,y_m^2)\le C+d(o,o')\,, $$
and the $(c,D)$-bouquet condition ensures that $d(y_m^1,y_m)\le 1+c$. Thus
\begin{equation}\label{E:y y2}
d(y_m,y_m^2)\le 1+c+C+d(o,o')\,.
\end{equation}
Using the $C$-rCAT(0) condition we
readily deduce that $\la:=(\la_n)$ is a $(c'',D')$-bouquet from $o'$ that is
asymptotic to $\b$, where $c''=1+c+2C+d(o,o')$. If $c''$ is larger than
$c'$, we simply replace $\la$ by the $a$-pruning of $\la$, where
$a=(c'-C)/(1+c+C+d(o,o'))$.
\end{proof}

\begin{rem}\label{R:bouquet-oo'}
The above proof works just as well if $(y_n)$ is any other sequence of
points such that $y_n$ lies on $\b_n$, $d(o,y_n)\le d(o,x_n)/2$, and
$(d(o,y_n))$ is unbounded, although we might need to select a subsequence to
ensure that $d(o,y_{n+1})$ and $d(o',y_{n+1})$ increase quickly enough.
Alternatively if $D(t)\le 1/(1\maxo 2t)$, we could use paths to $x_n$
instead of paths to $y_n$, since paths to $y_n$ were needed only to ensure
that the rCAT(0) condition could be applied to the resulting triangle. The
latter variant will prove useful in Section~\ref{S:top}.
\end{rem}

\begin{cor}\label{C:std asymp}
If $X$ is an rCAT(0) space, then $\sbq/\sym$ can be identified with $\bbd
X=\nbq/\sym$, and both are independent of the basepoint $o$.
\end{cor}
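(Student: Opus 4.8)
I would prove the corollary in two stages: first identify $\sbq/\sym$ with $\nbq/\sym = \bbd X$ via the inclusion $\sbq\subset\nbq$, and then show $\bbd X$ is independent of the basepoint. The engine throughout is Theorem~\ref{T:bouquet-oo'}, together with transitivity of $\sym$ (already noted to be an equivalence relation): if $d(\b^1(t),\b^2(t))\le K_1$ and $d(\b^2(t),\b^3(t))\le K_2$ for all $t$, then the triangle inequality gives $d(\b^1(t),\b^3(t))\le K_1+K_2$, so asymptoticity composes.

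For the first identification, since $\sym$ is defined identically on $\sbq$ and on $\nbq$, the inclusion descends to a well-defined map $\iota:\sbq/\sym\to\nbq/\sym$, and $\iota$ is injective because two standard bouquets asymptotic as elements of $\nbq$ are by definition asymptotic as elements of $\sbq$. The content is surjectivity: given an arbitrary $(c,D)$-bouquet $\b$ from $o$, I must produce a standard bouquet asymptotic to $\b$. I would apply Theorem~\ref{T:bouquet-oo'} with $o'=o$, with $c'=2C+2>C$, and with $D'=D_0$ the short function $D_0(t)=1/(1\maxo(2t))$ of the standard bouquet, obtaining a $(2C+2,D_0)$-bouquet $\la=(\la_n)$ from $o$, of lengths $M_n$, that is asymptotic to $\b$.

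It then remains only to force the prescribed lengths $(2C+2)^n$. Since $M_n\to\infty$, I choose an increasing sequence $(n_k)$ with $M_{n_k}\ge(2C+2)^k$ and prune, setting $\la^s_k:=\la_{n_k}|_{[0,(2C+2)^k]}$. By Observation~\ref{O:b subs} the sub-bouquet $(\la_{n_k})$ is asymptotic to $\la$, and by the pruning observation the $\a$-pruning with $\a_k=(2C+2)^k/M_{n_k}$ is a bouquet asymptotic to it, since the pruned lengths $(2C+2)^k$ increase to infinity. Each $\la^s_k$ is $D_0$-short by Lemma~\ref{L:subseg}, and the bouquet bound $d(\la^s_j(t),\la^s_k(t))\le 2C+2$ is inherited from $\la$; hence $\la^s$ is a genuine standard bouquet, and $\la^s\sym\la\sym\b$ yields $\la^s\sym\b$ by transitivity. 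This makes $\iota$ surjective, identifying $\sbq/\sym$ with $\bbd X$.

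For basepoint independence I would fix basepoints $o_1,o_2$, write $\nbq_{o_i}$ for the corresponding bouquet sets, and define $\Phi:\nbq_{o_1}/\sym\to\nbq_{o_2}/\sym$ by $[\b]\mapsto[\b']$, where $\b'$ is any bouquet from $o_2$ asymptotic to $\b$ supplied by Theorem~\ref{T:bouquet-oo'} (taking $c'=2C+2$ and any short function $D'$). Transitivity of $\sym$ makes $\Phi$ independent of the choice of $\b'$ and of the representative $\b$, and simultaneously yields injectivity; surjectivity follows by running Theorem~\ref{T:bouquet-oo'} in the reverse direction. Thus $\Phi$ is a bijection and $\bbd X$ is independent of $o$. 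The step I expect to be the main obstacle is the length adjustment in the surjectivity argument: one must verify that passing to a sub-bouquet and pruning preserves all three defining properties of a standard bouquet while keeping the result asymptotic to $\b$, which is precisely where Lemma~\ref{L:subseg} and the two observations on sub-bouquets and prunings are needed.
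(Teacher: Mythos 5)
Your proposal is correct and follows essentially the same route as the paper: apply Theorem~\ref{T:bouquet-oo'} (with the standard-bouquet parameters $c'=2C+2$, $D'(t)=1/(1\maxo(2t))$), then pass to a subsequence with lengths at least $(2C+2)^k$ and prune, using Lemma~\ref{L:subseg} and the sub-bouquet/pruning observations to see the result is a standard bouquet still asymptotic to the original. The paper merely compresses your two stages (surjectivity of $\sbq/\sym\to\nbq/\sym$ and basepoint independence) into a single application of the theorem at an arbitrary basepoint $o'$, leaving the bookkeeping you spell out as "the result now follows easily."
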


\begin{proof}
Let $\b$ be a bouquet from $o$ in $X$. By Theorem~\ref{T:bouquet-oo'}, there
exists a $(2C+2,D)$-bouquet $\b'$ from any other point $o'\in X$ that is
asymptotic to $\b$, where $D(t)=1/(1\maxo(2t))$, $t\ge 0$. If the associated
lengths $L_n'$ of $\b_n'$ are not as required, taking a subsequence allows
us to assume that they are at least as large as required, and then we get a
standard bouquet by suitably pruning this bouquet. The result now follows
easily.
\end{proof}

We next prove the equivalence of the bouquet boundary and the loose bouquet
boundary.

\begin{thm}\label{T:bouq bij 1}
If $X$ is an rCAT(0) space, then the identity map from $\nbq$ to $\lbq$
induces a natural bijection $\il:\bbd X\to\lbbd X$.
\end{thm}

\begin{proof}
Suppose $X$ is $C$-rCAT(0). Trivially $\il([\b]):=[\b]_{\rL}$ is
well-defined. We next prove that $\il$ is injective. Suppose that
$\il([\b^1])=\il([\b^2])$ for a pair of $(c,D)$-bouquets $\b^1,\b^2$. By
Theorem~\ref{T:bouquet-oo'}, we may assume that $\b^1,\b^2$ have a common
initial point $o$. Then there exists a little-o function $\d$ such that
$$
d(\b_{m_t}^1(t),\b_{n_t}^2(t))\le \d(t)\,,
$$
where the indices $m_t,n_t$ are such that $L_{m_t}^1\mino L_{n_t}^2\ge t$,
where $L_m^i=\len(\b_m^i)$ as usual. Using Lemma~\ref{L:rough cx}, we deduce
that for all indices $m,n\in\N$,
$$
d(\b_m^1(s),\b_n^2(s))\le s\frac{\d(t)}{t}+2C+2c\,,\qquad
  0\le s\le L_m^1\mino L_n^2\mino t\,.
$$
Letting $t$ tend to infinity, we deduce that
$$
d(\b_m^1(s),\b_n^2(s))\le 2C+2c\,,\qquad
  0\le s\le L_m^1\mino L_n^2\,,
$$
and so $[\b^1]=[\b^2]$, as required.

Finally, we prove that $\il$ is surjective. Suppose $\b=(\b_n)$ is a loose
$(\d,D)$-bouquet, with $\b_n:[0,L_n]\to X$, $n\in\N$ as usual. We choose a
strictly increasing sequence of positive integers $(n_k)_{k=1}^\infty$ such
that $L_{n_k}\ge k$ and $k\d(L_{n_k})\le L_{n_k}$. We let
$\b'=(\b_k')_{k=1}^\infty$ be the $\a$-pruning of the sub-bouquet
$(\b_{n_k})$, where $\a=(\a_k)_{k=1}^\infty$ is defined by $\a_k=k/L_{n_k}$.
Then $\b_k'$ has length $k$. Using Lemma~\ref{L:rough cx}, we deduce that
for all $m\le n$,
$$
d(\b_m'(s),\b_n'(s))\le C+\a_m\d(L_{n_m})\le C+1\,,\qquad
  0\le s\le m\,,
$$
Thus $\b'$ is a $(1+C,D)$-bouquet and, since it is a pruning of a
sub-bouquet of $\b$, it is loosely asymptotic to $\b$. Thus
$\il([\b'])=[\b]_{\rL}$, as required.
\end{proof}

\begin{cor}\label{C:5 equiv}
There are natural identification maps between the boundary variants
$\lbq/\sym_\rL$, $\nbq/\sym_\rL$, $\sbq/\sym_\rL$, $\nbq/\sym$, and
$\sbq/\sym$ of an rCAT(0) space $X$.
\end{cor}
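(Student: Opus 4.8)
The plan is to organize all five variants into a single commutative diagram of natural maps and then propagate bijectivity from the two results already in hand. The five quotients are built from the nested sets $\sbq\subset\nbq\subset\lbq$ together with the two relations $\sym$ and $\sym_\rL$, and the only structural facts I need are that each inclusion of bouquet sets is compatible with whichever relation is in use, and that $\sym$ refines $\sym_\rL$ (an asymptotic pair is loosely asymptotic). The first fact yields ``horizontal'' maps induced by inclusion (e.g.\ $\sbq/\sym_\rL\to\nbq/\sym_\rL\to\lbq/\sym_\rL$ and $\sbq/\sym\to\nbq/\sym$), while the second yields ``vertical'' coarsening maps $[\b]\mapsto[\b]_\rL$ (e.g.\ $\nbq/\sym\to\nbq/\sym_\rL$ and $\sbq/\sym\to\sbq/\sym_\rL$). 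First I would check that each such map is well defined, which is immediate in every case: loose asymptoticity of two bouquets is the same condition whether they are regarded as elements of $\sbq$, $\nbq$, or $\lbq$, and asymptoticity trivially implies loose asymptoticity.

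Next I would record the two known bijections as the anchors of the diagram: the map $\il\colon\nbq/\sym\to\lbq/\sym_\rL$ is a bijection by Theorem~\ref{T:bouq bij 1}, and $\sbq/\sym\to\nbq/\sym$ is a bijection by Corollary~\ref{C:std asymp}. The key observation is that $\il$ factors as the coarsening $\nbq/\sym\to\nbq/\sym_\rL$ followed by the inclusion-induced map $\nbq/\sym_\rL\to\lbq/\sym_\rL$. The coarsening map is trivially surjective, and its injectivity---namely that two \emph{bouquets} which are loosely asymptotic are in fact asymptotic---is exactly the content of the injectivity argument in the proof of Theorem~\ref{T:bouq bij 1} (the application of Lemma~\ref{L:rough cx} that upgrades a little-o bound to the uniform bound $2C+2c$). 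Hence $\nbq/\sym\to\nbq/\sym_\rL$ is a bijection, and since the composite $\il$ is a bijection, so is $\nbq/\sym_\rL\to\lbq/\sym_\rL$. This settles the equivalence of $\lbq/\sym_\rL$, $\nbq/\sym_\rL$, and $\nbq/\sym$.

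It then remains to insert the two standard-bouquet variants. For the inclusion-induced map $\sbq/\sym_\rL\to\nbq/\sym_\rL$, injectivity is automatic because loose asymptoticity of two standard bouquets is the same relation in $\sbq$ as in $\nbq$, and surjectivity follows from Corollary~\ref{C:std asymp} (equivalently Theorem~\ref{T:bouquet-oo'}), which guarantees that every bouquet is asymptotic---hence loosely asymptotic---to a standard one; so this map too is a bijection. Finally the coarsening $\sbq/\sym\to\sbq/\sym_\rL$ fits into the commuting square with the already-established bijections $\sbq/\sym\to\nbq/\sym$, $\nbq/\sym\to\nbq/\sym_\rL$, and $\sbq/\sym_\rL\to\nbq/\sym_\rL$; since three sides of the square are bijections and the square commutes, the fourth side is bijective as well.

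I expect no serious obstacle here: the corollary is essentially a bookkeeping exercise, and the only point requiring genuine care is verifying commutativity of the diagram so that bijectivity propagates correctly---in particular recognizing that the injectivity half of Theorem~\ref{T:bouq bij 1} is precisely the statement that the coarsening map on $\nbq$ is injective, so that nothing new needs to be proved about upgrading loose asymptoticity to asymptoticity.
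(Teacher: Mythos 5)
Your proposal is correct and takes essentially the same approach as the paper: both arrange the five quotients into a commutative diagram anchored by the two prior results (Theorem~\ref{T:bouq bij 1} for the bijection $\il$ and Corollary~\ref{C:std asymp} for the surjectivity of the restricted quotient map on $\sbq$) and propagate bijectivity through the inclusion-induced and coarsening maps. Your write-up merely makes explicit the factorizations (e.g.\ $\il$ as the coarsening $\nbq/\sym\to\nbq/\sym_\rL$ followed by the inclusion-induced map into $\lbq/\sym_\rL$) that the paper's terser proof leaves to the reader.
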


\begin{proof} Let $q_1:\nbq\to\bbd X = \nbq/\sym$ and $q_2:\lbq\to\lbbd X =
\lbq/\sym_\rL$ be the defining quotient maps. By Corollary~\ref{C:std
asymp}, $q_3:=q_1|_{\sbq}:\sbq\to\bbd X$ is surjective. By
Theorem~\ref{T:bouq bij 1}, the identity map $i:\nbq\hookrightarrow\lbq$
induces a natural identification $\il:\bbd X\to\lbbd X$, and so the
following diagram commutes.
$$
\xymatrix{
\sbq \ar@{->>}[rd]_{q_3} \ar@{^{(}->}[r] &
 \nbq \ar@{->>}[d]^{q_1} \ar@{^{(}->}[r]^{i} &
 \lbq \ar@{->>}[d]^{q_2} \\
& \bbd X \ar@{^{(}->>}[r]^{\il} & \lbbd X
}
$$
Each of the five types of boundary is either the image of $q_i$, $1\le i\le
3$, or the image of $i_L\circ q_i$, $i=1,3$, and they can all be identified
because the maps $q_i$ are all surjective and $\il$ is bijective.
\end{proof}

\subsection{The ideal boundary versus the bouquet boundary}

In an rCAT(0) space $X$, a geodesic ray $\g:[0,\infty)\to X$ can be
identified with the bouquet $(\g_n)_{n=1}^\infty$, where $\g_n$ is the
initial segment of length $n$ of $\g$, parametrized by arclength. This gives
rise to a natural injection $\ii:\ibd X\to\bbd X$.

\begin{thm}\label{T:CAT0 bij}
If $X$ is a complete CAT(0) space, then the natural injection $\ii:\ibd
X\to\bbd X$ is bijective.
\end{thm}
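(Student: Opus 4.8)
Since $\ii$ is already a well-defined natural injection (two geodesic rays give asymptotic bouquets precisely when their unit-speed reparametrizations stay within bounded distance, which is exactly the equivalence defining $\ibd X$), the entire content of the theorem is surjectivity. The plan is therefore to take an arbitrary bouquet $\b=(\b_n)$ from a basepoint $o$, with tips $x_n:=\b_n(L_n)$, and to produce a geodesic ray $\g$ from $o$ with $\ii([\g])=[\b]$.

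First I would replace $\b$ by a bouquet of genuine geodesic segments. Since $X$ is complete CAT(0) it is $C$-rCAT(0) by Proposition~\ref{P:cat0 is rcat0}, and between $o$ and $x_n$ there is a unique unit-speed geodesic segment $\sigma_n:[0,R_n]\to X$, $R_n:=d(o,x_n)$. Applying Lemma~\ref{L:rough cx} to $\b_n$ and $\sigma_n$—which share both endpoints, so that the constant may be taken to be $C$—gives $d(\sigma_n(t),\b_n(t))\le C+1$ for all $t$ in the common domain, the extra $1$ absorbing the bounded discrepancy between the two arclength parametrizations, whose lengths differ by at most $D(R_n)\le 1$. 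Combined with the bouquet bound on $d(\b_m(t),\b_n(t))$ and the triangle inequality, this shows that $(\sigma_n)$—after passing to a subsequence so that $(R_n)$ increases to infinity—is itself a bouquet from $o$ that is asymptotic to $\b$. Thus it suffices to find a geodesic ray asymptotic to the geodesic bouquet $(\sigma_n)$.

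Second, I would run the standard CAT(0) convergence argument, now for genuine geodesics. Because $\sigma_m$ and $\sigma_n$ emanate from the common point $o$, convexity of the CAT(0) metric makes $t\mapsto d(\sigma_m(t),\sigma_n(t))$ convex and vanishing at $t=0$, so the ratio $d(\sigma_m(t),\sigma_n(t))/t$ is nondecreasing. Evaluating at $t=R_m\mino R_n$ and using the uniform bouquet bound $M$ there yields $d(\sigma_m(t),\sigma_n(t))\le tM/(R_m\mino R_n)$ for all $t\le R_m\mino R_n$. For fixed $t$ this tends to $0$ as $m,n\to\infty$, so $(\sigma_n(t))$ is Cauchy, and completeness supplies a limit $\g(t):=\lim_n\sigma_n(t)$. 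Passing distances to the limit shows that $\g:[0,\infty)\to X$ is a unit-speed geodesic ray from $o$. Finally, letting $m\to\infty$ in the same inequality gives $d(\g(t),\sigma_n(t))\le M$ for $t\le R_n$, so the canonical bouquet $(\g|_{[0,k]})$ of $\g$ is asymptotic to $(\sigma_n)$, whence $\ii([\g])=[\b]$.

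The routine parts are the triangle-inequality bookkeeping in the first step and verifying that the limit $\g$ is a unit-speed geodesic. The step I expect to carry the real weight is the passage from a uniform (bounded) separation of the geodesics $\sigma_n$ to a separation tending to $0$ at each fixed parameter: this is exactly where convexity of the CAT(0) metric together with completeness is essential, and, as flagged in the motivating discussion, is precisely the feature that fails for merely rCAT(0) spaces.
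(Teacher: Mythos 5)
Your proof is correct, and it takes a genuinely different route from the paper's. The paper obtains Theorem~\ref{T:CAT0 bij} as an immediate consequence of the stronger Theorem~\ref{T:geod ray} (an asymptotic geodesic ray from an \emph{arbitrary} point $o'$), whose proof never touches geodesic segments: it invokes Theorem~\ref{T:bouquet-oo'} to produce a bouquet from $o'$ asymptotic to $\b$, passes to a subsequence with rapidly growing lengths, prunes by factors $2^{-n}$, and uses the final CAT(0) statement of Lemma~\ref{L:rough cx} (that $C'$ may be taken arbitrarily small once the shortness parameters are small) so that the tails of the pruned bouquet have geometrically decaying bouquet constants and hence converge, in the pointed Hausdorff sense, to a geodesic ray. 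You instead exploit the fact that a complete CAT(0) space is geodesic: you replace $\b$ by the geodesic segments $\sigma_n$ to its tips --- your use of Lemma~\ref{L:rough cx} with shared endpoints, including the extra $1$ absorbing the reparametrization discrepancy $L_n-R_n\le D(R_n)\le 1$, is sound --- and then run the classical argument, in which convexity of the metric \cite[II.2.2]{BHr} for geodesics issuing from a common origin upgrades the uniform bound $M$ to the decaying bound $tM/(R_m\mino R_n)$, and completeness supplies the limiting ray. This is precisely the ``standard proof'' that the paper sketches as motivation at the start of Section~\ref{S:bouquet}, adapted so that the segments run to the tips of $\b$ rather than to points on a ray. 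Your route is shorter and more elementary, needing only one standard CAT(0) fact beyond the paper's lemmas, and the basepoint-$o$ case it delivers is all that surjectivity of $\ii$ requires. The paper's detour buys the stronger Theorem~\ref{T:geod ray}: note that geodesics from a different basepoint $o'$ to the tips would not obviously form a bouquet (their mutual distances need not be uniformly controlled without the midpoint construction), which is exactly what Theorem~\ref{T:bouquet-oo'} handles, and the paper's argument stays inside the short-path framework that later sections reuse.
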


Before proving Theorem~\ref{T:CAT0 bij}, we give simple examples to show
that the ideal boundary is not as well behaved in rCAT(0) spaces, or even in
incomplete CAT(0) spaces, as it is in complete CAT(0) spaces. Such
pathologies would be known to experts. The point of giving them here is to
contrast them with Theorem~\ref{T:bouquet-oo'} which says that no such
pathologies arise with the bouquet boundary of an rCAT(0) space.

\begin{exa}\label{X:CAT0 1}
Let $X$ be the metric subspace of the Euclidean plane given as follows using
Cartesian coordinates:
$$ X=\{(0,0)\}\cup (0,1)\times(0,\infty)\,. $$
Then, as a convex subset of the Euclidean place, $X$ is CAT(0). It is also
clear that $\bbd X$ is a singleton set, as is $\ibd X$ if we define it as
the set of equivalence classes of geodesic rays. There is however no
geodesic ray from $o=(0,0)$.
\end{exa}

\begin{exa}\label{X:CAT0 2}
Let $X_i$ be an isometric copy of the space $X$ in Example~\ref{X:CAT0 1},
and let $Y_I$ be the metric space obtained as a quotient of the disjoint
union $\bigcup_{i\in I} X_i$ where we identify every copy of $(0,0)$, and
$I$ is some nonempty index set. Then $Y_I$ is CAT(0) and there is a natural
bijection from $\bbd {Y_I}$ to $I$; the same can be said of $\ibd X$ if we
define it as the set of equivalence classes of geodesic rays. However, there
is only one geodesic ray from each $y\in Y_I$, $y\ne o$, and none at all
from $o$.
\end{exa}

In the previous pair of examples, we could identify the ideal and bouquet
boundaries, even if the ideal boundary was not as well behaved. The next
example shows that the situation can be worse than this.

\begin{exa}\label{X:CAT0 3}
Let $X$ be the subset of the Euclidean plane given as follows
$$
X = \(\bigcup_{i=0}^\infty\{(i,0)\}\) \cup
    \(\bigcup_{i=1}^\infty (i-1,i)\times (0,1)\)\,.
$$
Then we claim that $X$ is CAT(0). To see this, suppose that $x,y,z$ are
fixed but arbitrary points in $X$. If these points can be connected by a
geodesic triangle, then any such triangle must clearly be contained in some
``initial part'' of $X$ having the form $X_n=\bigcup_{i=1}^n A_i$, where
$$
A_i = \{(i-1,0),(i,0)\} \cup (i-1,i)\times (0,1)\,, \qquad 1\le i\le n\,.
$$
Since each $A_i$ is a convex subset of the plane, it is CAT(0) in the
induced metric. Since $X_n$ is obtained by a finite succession of isometric
gluings of the sets $A_i$ along closed convex subsets (in fact along
singleton sets!), it follows from the basic gluing theorem II.11.1 of
\cite{BHr} that $X_n$ is CAT(0). In particular there exists at least one
geodesic triangle with vertices $x,y,z$, and all such triangles satisfy the
CAT(0) inequality. Since $x,y,z$ are arbitrary, we deduce that $X$ is
CAT(0). It is also clear that the bouquet boundary is a singleton set, but
that $X$ contains no geodesic ray. By joining isometric copies of $X$ at
$o=(0,0)$, we can get a space whose bouquet boundary has any desired
cardinality, but whose ideal boundary is empty.
\end{exa}

Theorem~\ref{T:CAT0 bij} is an immediate consequence of the following
generalization to bouquets of a well-known result concerning geodesic rays;
the proof is a modification of that of Theorem~\ref{T:bouquet-oo'}.

\begin{thm}\label{T:geod ray}
Let $X$ be a complete CAT(0) space and let $\b$ be a $(c,D)$-bouquet from
$o$ in $X$. Given any $o'\in X$, there exists a geodesic ray parametrized by
arclength $\g:[0,\infty)\to X$ with $\g(0)=o'$ and which is asymptotic to
$\b$ in the sense that there exists a constant $c'$ such that
$$
d(\g(t),\b_n(t))\le c'\,, \qquad 0\le t\le L_n\,, \; n\in\N\,,
$$
where as usual $L_n=\len(\b_n)$.%
\end{thm}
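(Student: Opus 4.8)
The plan is to construct $\g$ as a limit of geodesic segments from $o'$ to suitably chosen points on the bouquet $\b$, mimicking both the standard geodesic-ray construction and the proof of Theorem~\ref{T:bouquet-oo'}. As in that earlier proof, I would not run geodesics all the way to the tips $x_n=\b_n(L_n)$ but rather to interior points $y_n=\b_n(M_n)$ chosen so that $d(o,y_n)=d(o,x_n)/2$; the reason is exactly the one flagged in Remark~\ref{R:bouquet-oo'}, namely that using midpoint-type targets keeps the relevant comparison triangles within the regime where the rCAT(0)/CAT(0) convexity estimates apply. After thinning out $\b$ if necessary so that the lengths grow fast enough (the same quantitative inequalities $L_{n+1}\ge L_n+4d(o,o')+3$ as before), let $\la_n:[0,M_n']\to X$ be the geodesic segment from $o'$ to $y_n$, which exists and is unique because $X$ is a complete CAT(0) space.

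The key point, and the place where completeness does the real work, is to upgrade the uniform-bound estimate of Theorem~\ref{T:bouquet-oo'} into genuine convergence. First I would reprise the argument of that theorem verbatim to obtain a uniform bound $d(y_m,\la_n(t))\le c'$ for the appropriate parameter ranges, so that $(\la_n)$ is a genuine bouquet asymptotic to $\b$. Then, fixing $t\ge 0$ and taking $m,n$ large, I would apply the last statement of Lemma~\ref{L:rough cx}: in a CAT(0) space the convexity constant $C'$ can be taken to be \emph{any} positive number once the shortness parameters are small enough. Since $\la_m,\la_n$ are genuine geodesics (so their shortness parameters are $0$), applying convexity to the two geodesics $\la_m,\la_n$ from the common origin $o'$ gives
$$
d(\la_m(t),\la_n(t))\le \frac{t}{M_m'\mino M_n'}\,d(\la_m(M_m'\mino M_n'),\dots)\,,
$$
which, combined with the uniform bound on the far ends and the fact that $M_m'\mino M_n'\to\infty$, forces $d(\la_m(t),\la_n(t))\to 0$ as $m,n\to\infty$. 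Hence $(\la_n(t))$ is Cauchy for each fixed $t$, and by completeness I may define $\g(t):=\lim_n \la_n(t)$.

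It remains to verify that $\g$ has the desired properties. That $\g$ is a unit-speed geodesic ray follows because each $\la_n$ is unit-speed on $[0,M_n'\mino t]$ and pointwise limits of geodesics (with the locally uniform convergence just established) are again geodesics; continuity of $d$ gives $d(\g(s),\g(t))=|s-t|$ for all $s,t$ in the common range, and $M_n'\to\infty$ makes $\g$ defined on all of $[0,\infty)$. The asymptoticity estimate $d(\g(t),\b_n(t))\le c'$ is obtained by passing to the limit in $m$ in the uniform bound $d(\la_m(t),\b_n(t))\le c'$ already derived.

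I expect the main obstacle to be the convergence step, i.e.\ arranging the hypotheses of the CAT(0) clause of Lemma~\ref{L:rough cx} cleanly so that the pointwise-in-$t$ Cauchy property is rigorous: one must be careful that $t$ is fixed while $m,n\to\infty$, that the far endpoints of the two geodesics stay within a bounded Hausdorff distance (which is what the reprised Theorem~\ref{T:bouquet-oo'} argument supplies), and that the convexity contraction factor $t/(M_m'\mino M_n')$ genuinely tends to $0$. The bookkeeping with the thinning inequalities and the shortness-parameter bounds $h_n'\le H(o,o',y_n)$ is routine and identical to Theorem~\ref{T:bouquet-oo'}, so I would simply refer back to that proof for those estimates and concentrate the new work on the limit.
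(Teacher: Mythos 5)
Your proof is correct, but the convergence mechanism is genuinely different from the paper's. The paper invokes Theorem~\ref{T:bouquet-oo'} only as a black box, so it works with a bouquet $\b'$ of $D'$-short paths from $o'$ rather than geodesics; it then passes to a rapidly growing subsequence ($L_{n+1}'\ge 4^nL_n'$), prunes by factors $2^{-n}$, and applies the approximate CAT(0) clause of Lemma~\ref{L:rough cx} --- with the error $C'$ made small by choosing the short function $D'$ small --- to show that the tails of the pruned sequence are bouquets with geometrically decaying constants $2^{-n_0+1}c_2$; this forces pointed Hausdorff convergence to a limit path, which is a geodesic ray only because the shortness parameters tend to $0$. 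You instead re-run the construction inside the proof of Theorem~\ref{T:bouquet-oo'} with genuine geodesic segments $\la_n$ from $o'$ to $y_n$ --- legitimate, since a geodesic is $h$-short for every $h\ge 0$, and a CAT(0) space is geodesic with unique geodesics by definition (a small correction: existence and uniqueness of $\la_n$ come from the CAT(0) hypothesis, not from completeness; completeness enters only at your Cauchy-limit step, exactly as you say) --- and then exact CAT(0) convexity (the $C'=0$ case for geodesics, which the paper records just after Lemma~\ref{L:rough cx}) gives $d(\la_m(t),\la_n(t))\le \frac{t}{M}\,d(\la_m(M),\la_n(M))\le \frac{t}{M}\,c''$ with $M=M_m'\mino M_n'\to\infty$, so $(\la_n(t))$ is Cauchy for each fixed $t$ and the limit $\g$ is automatically a unit-speed ray. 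Your route is the classical complete-CAT(0) argument (the one sketched in the paper's motivation at the start of Section~\ref{S:bouquet}) adapted to bouquets: it is more elementary and avoids the delicate interplay of pruning rates, length growth, and smallness of $D'$ that the paper's proof must manage; the paper's route, in exchange, reuses Theorem~\ref{T:bouquet-oo'} verbatim and never constructs a geodesic segment, the ray emerging only in the limit. Two small repairs you should make: your displayed convexity inequality is left unfinished (it should read as the estimate above), and since both you and the paper thin $\b$ to a subsequence, you should add the one-line remark that asymptoticity to a sub-bouquet implies asymptoticity to $\b$ itself at the cost of the additive constant $c$ (using property (iii) of Definition~\ref{D:loose bouquet}), so that the final estimate indeed holds for all $n\in\N$ as the theorem asserts.
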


\begin{proof}
By Theorem~\ref{T:bouquet-oo'}, there exists a $(c_2,D')$-bouquet
$\b'=(\b_n')$ from $o'$ that is asymptotic to $\b$, where $c_2$ is as in the
proof of that result and $D'$ is an arbitrary short function to be fixed
below. If necessary, we take a subsequence of $\b'$ to ensure that the
associated sequence of path lengths $(L_n')$ are such that $L_{n+1}'\ge 4^n
L_n'>0$. For each $n\in\N$, we then prune $\b_n'$ by a factor $\a_n:=2^{-n}$
to get a path $\b_n''$ of length $L_n''$ which increases to infinity. For
$n_0\in\N$, the sequence $(\b_{n+n_0}'')_{n=0}^\infty$ is a
$(2^{-n_0+1}c_2,D')$-bouquet: this follows from Lemma~\ref{L:rough cx} with
$a_1=a_2=o'$. Note that the parameter $2^{-n_0+1}c_2$ is twice as large as
would be needed for $C'=0$ in order to incorporate the $C'$ term; for this
to suffice, we of course need that $C'>0$ be sufficiently small, but this
can be guaranteed by choosing a sufficiently small short function $D'$
above.

It follows that the sequence $\b''=(\b_n'')$ converges in the pointed
Hausdorff sense to a path $\g$. Since $\b_n''$ is $h_n$-short where
$h_n=D'(d(o,y_n))\to 0$ as $n\to\infty$, it follows that $\g$ is a geodesic
ray.

Since $\b'$, and so also $\b''$, is asymptotic to $\b$, it is clear that
$\g$ is asymptotic to $\b$.
\end{proof}

\subsection{The end boundary and the bouquet boundary}

By an {\it end} of a metric space $X$ (with basepoint $o$), we mean a
sequence $(U_n)$ of components of $X\setminus \bB_n$, where
$\bB_n=\overline{B(o,n)}$ for fixed $o\in X$ and $U_{n+1}\subset U_n$ for
all $n\in\N$. We do not require $\bB_n$ to be compact. We denote by $\ebd X$
the collection of ends of $X$ and call it the {\it end boundary of $X$}.

Ends with respect to different basepoints are compatible under set
inclusion: defining $U_n,V_n$ for all $n\in\N$ to be components of
$X\setminus\overline{B(o,n)}$ and $X\setminus\overline{B(o',n)}$,
respectively, it is clear that $U_n$ is a subset of a unique $V_m$ whenever
$n-m>d(o,o')$. This compatibility gives rise to a natural bijection between
ends with respect to different basepoints, allowing us to identify them and
treat the end boundary as being independent of the basepoint.

A {\it finite $\e$-net} for a subset $A$ of a metric space $X$ is a set
$S\subset X$ of finite cardinality such that every $x\in A$ lies in the ball
$B(x,\e)$ for some $x\in S$. Requiring $A$ to have a finite $\e$-net for
fixed $\e>0$ is strictly weaker than requiring $A$ to be totally bounded.

We now examine the relationship between the end boundary and the bouquet
boundary of an rCAT(0) space.

\begin{thm}\label{T:end}
If $X$ is an rCAT(0) space, then there is a natural map $\me:\bbd X\to\ebd
X$. If additionally there exists $\e>0$ such that every ball in $X$ has a
finite $\e$-net, then
\begin{enumerate}
\item $\me$ is surjective;
\item $\bbd X$ and $\ebd X$ are nonempty if and only if $X$ is
    unbounded.
\end{enumerate}
\end{thm}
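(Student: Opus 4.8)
The plan is to define $\me$ directly from the tip behaviour of a bouquet, and then obtain surjectivity by converting the finite $\e$-nets into a genuine bouquet via a K\"onig's-lemma selection. I would first treat a single bouquet $\b=(\b_n)$ from $o$, with tips $x_n=\b_n(L_n)$. Since each $\b_n$ is $1$-short, Lemma~\ref{L:subseg} gives $t-1\le d(o,\b_n(t))\le t$, so $d(o,x_n)\to\infty$. Fixing a radius $r$ and setting $t_0=r+\c+3$ (where $\c$ is the bouquet constant), for any two indices with $L_m,L_n\ge t_0$ I would connect $x_m$ to $x_n$ by running backward along $\b_m$ to $\b_m(t_0)$, crossing to $\b_n(t_0)$ along a $1$-short segment of length $\le\c+1$ (using condition (iii)), and running forward along $\b_n$ to $x_n$; every point $w$ of this path satisfies $d(o,w)\ge t_0-\c-2=r+1$, so it stays in $X\setminus\bB_r$. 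Hence all tips with $L_n\ge t_0$ lie in a single component $U_r$ of $X\setminus\bB_r$, the $U_r$ are nested, and $\me(\b):=(U_r)$ is an end. The same connecting estimate with $\c$ replaced by the asymptoticity constant shows that asymptotic bouquets give the same end, so (fixing a common basepoint by Corollary~\ref{C:std asymp} and using the basepoint-independence of $\ebd X$) $\me$ descends to a well-defined natural map $\bbd X\to\ebd X$. No $\e$-net hypothesis is needed here.

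For surjectivity (a), given an end $(U_r)$ --- which forces $X$ unbounded with each $U_r\neq\emptyset$ --- I would pick $z_r\in U_r$, so $d(o,z_r)\to\infty$, and choose unit-speed $D$-short paths $\g_r$ from $o$ to $z_r$. For each integer level $\ell$ let $S_\ell$ be a finite $\e$-net of $\bB_\ell$; since $d(o,\g_r(\ell))\le\ell$, the point $\g_r(\ell)$ lies within $\e$ of some element of $S_\ell$, colouring the index $r$ by a value $\chi_\ell(r)\in S_\ell$. The colour-prefixes $(\chi_1(r),\dots,\chi_\ell(r))$ realized by infinitely many indices form a finitely branching tree, which is infinite because the lengths tend to infinity; K\"onig's lemma then yields an infinite branch $(a_\ell)$. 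I would select strictly increasing indices $n_1<n_2<\cdots$ (so $n_i\ge i$) with $L_{n_i}\ge i$ and $\chi_\ell(n_i)=a_\ell$ for all $\ell\le i$.

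The main obstacle is condition~\ref{D:loose bouquet}(iii): the raw selection controls the $\g_{n_i}$ only up to the selection depth, yielding a Cauchy/tail bound rather than the required all-pairs bound, and without completeness there is no limiting ray to pass to. I resolve this by \emph{pruning}: set $\b_i:=\g_{n_i}|_{[0,i]}$. For $i\le j$ and any integer $\ell\le i$, both $\g_{n_i}(\ell)$ and $\g_{n_j}(\ell)$ lie within $\e$ of the common net point $a_\ell$, so $d(\b_i(\ell),\b_j(\ell))\le 2\e$; rounding $t\le i$ down to such an $\ell$ and using unit speed gives $d(\b_i(t),\b_j(t))\le 2\e+2$ for all $t\le i$. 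Thus $(\b_i)$ is a $(2\e+2,D)$-bouquet. Moreover the tail $\g_{n_i}|_{[i,L_{n_i}]}$ stays in $X\setminus\bB_{i-2}$ and ends at $z_{n_i}\in U_{n_i}\subseteq U_{i-2}$, so its tip $\b_i(i)$ lies in $U_{i-2}$; hence the tips eventually enter every $U_r$ and $\me([\b])=(U_r)$, giving (a).

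Finally, for (b): if $X$ is bounded then $X=\bB_N$ for some $N$, so $X\setminus\bB_n=\emptyset$ for $n\ge N$ and there are neither ends nor bouquets (a bouquet would force tips with $d(o,x_n)\ge L_n-1\to\infty$). Conversely, if $X$ is unbounded I would apply the selection-and-pruning of the previous two paragraphs to any sequence $w_k$ with $d(o,w_k)\to\infty$ --- no target end required --- to produce a bouquet, whence $\bbd X\neq\emptyset$, and then $\me$ of this bouquet exhibits a point of $\ebd X$. This also yields Theorem~\ref{T:end intro}, since a proper space has a finite $\e$-net in every ball.
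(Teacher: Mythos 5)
Your proposal is correct and takes essentially the same route as the paper: the natural map $\me$ is obtained by showing that the tips eventually lie in a single nested component of $X\setminus\overline{B(o,r)}$ (via $1$-shortness plus the bouquet/asymptoticity bounds), and surjectivity comes from extracting, scale by scale, subsequences whose integer-level points cluster in $\e$-balls and then passing to a pruned diagonal --- your K\"onig's-lemma selection is the paper's nested-subsequence induction in different notation. The only deviation is minor: you verify condition (iii) by rounding $t$ down to an integer level, obtaining a $(2\e+2,D)$-bouquet without invoking the rCAT(0) hypothesis at that step, whereas the paper instead applies Lemma~\ref{L:rough cx} to get an $(\e+C,D)$-bouquet; both are valid.
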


The assumption that balls have finite $\e$-nets is essentially stating that
balls are ``totally bounded at a fixed scale $\e$''. This holds in
particular if $X$ is proper, so the above result implies Theorem~\ref{T:end
intro}. Without this assumption both conclusions (a) and (b) in
Theorem~\ref{T:end} can fail. In the case of (b) this is easy to see: the
union of segments from the origin to $(n,1)$, $n\in\N$, in the Euclidean
plane is unbounded but clearly both $\bbd X$ and $\ebd X$ are empty. For the
failure of (a), even for complete CAT(-1) spaces (a condition which implies
both CAT(0) and Gromov hyperbolic), we refer the reader to
\cite[Theorem~2]{BF2}.

\begin{proof}[Proof of Theorem~\ref{T:end}]
Let $\cU_n$ denote the set of components of $X\setminus\overline{B(o,n)}$
for fixed $o\in X$, and let $\b=(\b_n)$ be a bouquet with initial point $o$,
where $\b_n:[0,L_n]\to X$ as usual.  We claim that for each $m\in\N$ there
exists $t_0=t_0(m)>0$ and $U_m\in\cU_m$ such that $\b_n(t)\in U_m$ whenever
$n\in\N$ and $t_0<t\le L_n$. In fact if $\b$ is a $(c,D)$-bouquet from $o$,
then we can take $t_0=m+3/2+c/2$.

To justify the claim, suppose first that $x:=\b_n(t)\in U_m$ and
$x':=\b_n(t')\in U_m'$, where $U_m,U_m'$ are distinct elements of $\cU_m$,
and $t'>t>t_0=m+3/2+c/2$. Any path from $x$ to $x'$ must pass through
$\overline{B(o,m)}$, and $\b_n$ is $1$-short, so
$$ d(x,x') \ge t-1-m+t'-1-m = t'-t+2(t-m-1)> t'-t+1\,, $$
contradicting the fact that the segment $\b|_{[t,t']}$ is $1$-short.

Suppose next that $x:=\b_n(t)\in U_m$ and $x':=\b_{n'}(t)\in U_m'$, where
$U_m,U_m'$ are distinct elements of $\cU_m$, and $t'>t>t_0=m+3/2+c/2$. As
before, $d(x,x')\ge 2(t-m-1)>1+c$, contradicting part (iii) of the
definition of a $(c,D)$-bouquet. Putting together these last two proofs by
contradiction we deduce the claim.

Fixing $\b\in\nbq$, it similarly follows that if $\b'\in\nbq$ is equivalent
to $\b$, and $\b_n':[0,L_n']\to X$ as usual, then for each $m\in\N$ there
exists $t_0=t_0(m,\b')>0$ and $U_m\in\cU_m$ such that $\b_n'(t)\in U_m$
whenever $n\in\N$ and $t_0<t\le L_n'$. In fact if $K$ is the constant from
Definition~\ref{D:bouquet-bdy}, then we can take $t_0=m+3/2+K/2$.

Thus we get a well-defined natural map $\me:\bbd X\to\ebd X$ by taking
$\me(x)=(U_n)$ for $x=[(\b_n)]\in\bbd X$, where $U_m\in\cU_m$ for $m\in\N$
is defined by the requirement that $\b_n(t)\in U_m$ whenever $t$ is
sufficiently large, and $n$ is so large that $t\le L_n$.

Fixing $\e>0$, we now assume that every ball in $X$ has a finite $\e$-net,
and that $C$ is the rCAT(0) constant of $X$. We also fix a basepoint $o\in
X$. We claim that if $(x_n)$ is any sequence in $X$ such that $d(o,x_n)$ is
an increasing function of $n$ and $d(o,x_n)\ge n$, then there exists an
$(\e+C,D)$-bouquet $\g=(\g_n)$ such that the tips of $\g$ are located on
$1$-short paths from $o$ to some subsequence of $(x_n)$.

Let $D$ be a short function, and let us pick a sequence of $D$-short unit
speed paths $(\b_n)$ from $o$ to $x_n$. Writing $\b_{0,n}:=\b_n$, $n\in\N$,
the existence of a finite $\e$-net for the ball $\{x\in X\mid d(o,x)\le n\}$
allows us to pick a subsequence $(\b_{k,n})_{n=1}^\infty$ of
$(\b_{k-1,n})_{n=1}^\infty$ inductively so that the points $(\b_{k,n}(k))$
all lie in a ball of radius $\e$. We now take $\g_n:[0,n]\to X$ to be the
initial segment of $\b_{n,n}$ for each $n\in\N$. It follows readily from
Lemma~\ref{L:rough cx} that $(\g_n)_{n=1}^\infty$ is an $(\e+C,D)$-bouquet,
and so the claim follows.

We now apply the above procedure to each end $(U_n)$: just restrict the
initial sequence $(x_n)$ so that $x_n\in U_n$. Then $\g_n(n)$ is in the same
component of $X\setminus\overline{B(o,n-1)}$ as $x_{n,n}$, and it then
readily follows that $\me([\g])=(U_n)$. We have therefore proved (a).

Suppose $X$ is unbounded, so there exists a sequence $(x_n)$ in $X$ such
that $d(o,x_n)$ is an increasing function of $n$ and $d(o,x_n)\ge n$. By the
claim it follows that $\bbd X$ is nonempty, and so $\me(\bbd X)\subset\ebd
X$ is also nonempty. Conversely if $X$ is bounded, it is trivial that $\bbd
X$ and $\ebd X$ are empty. This concludes the proof of (b).
\end{proof}

\begin{rem}\label{R:totally bounded}
Using the concept of loose asymptoticity, the above proof can be adapted to
give the same conclusions if the $\e$-net assumption is replaced by the
following weaker one: for a sequence of values $r_n\to\infty$, the balls
$B(o,r_n)$ can be covered by a finite collection of balls of radius
$\d(r_n)$, where $\d$ is a little-o function. An example of a space
satisfying this assumption for every such sequence $(r_n)$ is the following
subspace $X$ of $l^2$:
$$ X = \{ x\in l^2\mid \|q_1(x)\|_2 \le \d(|p_1(x)|) \} $$
where $p_1$ is projection onto the first coordinate, $q_1=I-p_1$ is the
complementary projection, and $\d$ is a little-o function.
\end{rem}

\begin{rem}\label{R:end inj}
Although Theorem~\ref{T:end} tells us that $\me$ is surjective for many nice
spaces, $\me$ quite often fails to be injective in such cases. For instance
$\ibd\R^2=\bbd\R^2$ can be identified with the unit circle, but the
Euclidean plane $\R^2$ has only one end. To deduce that $\me:\bbd X\to\ebd
X$ is injective, it would suffice to assume some sort of mild bottleneck
condition in each end. In fact if $(U_n)$ is an end in an rCAT(0) space $X$,
and if there exists:
\begin{itemize}
\item a strictly increasing sequence of integers $(n_k)$,
\item a sequence of points $(x_k)$ with $x_k\in U_{n_k}$,
\item a little-o function $\d$, and
\item a sequence of positive numbers $(r_k)$ such that $r_k\le\d(n_k)$
    and such that every path from $o$ to $U_{n_k}\setminus B(x_k,r_k)$
    must pass through $B(x_k,r_k)$,
\end{itemize}
then there is a unique $x\in\bbd X$ such that $\me(x)=(U_n)$. For existence,
we show that a sequence of suitably short paths from $o$ to $x_n$, $n\in\N$,
gives a loose bouquet, and for uniqueness, we use Lemma~\ref{L:rough cx}. We
leave the details to the reader.
\end{rem}

\begin{rem}
However a bottleneck condition of the type considered in Remark~\ref{R:end
inj} is not necessary for $\me$ to be injective. To see this, we first
define
\begin{align*}
X_0 &:=  \{(x,y)\in\R^2\;:\; |y|\le x\le 1\}\,, \\
X_i &:=  \{(x,y)\in\R^2\;:\; |y|\le x,\; 2^{i-1}\le x\le 2^i\}\,,\qquad
i\in\N\,.
\end{align*}
We attach the Euclidean metric to each of the above sets. For all $i\ge 0$,
$X_i$ is a convex subset of the Euclidean plane, and so it is CAT(0). Now
let $X$ be the space obtained by isometrically gluing $X_{i-1}$ to $X_i$,
$i\in\N$, according to the following rule: if $i$ is odd, we glue along the
line segment of points $(2^{i-1},y)\in\R^2$, $0\le y\le 2^{i-1}$, while if
$i$ is even, we glue along the line segment of points $(2^{i-1},y)\in\R^2$,
$-2^{i-1}\le y\le 0$. Each gluing is along an isometric pair of closed
convex subsets of the complete spaces $X_{i-1}$ and $X_i$, and it follows as
in Example~\ref{X:CAT0 3} that $X$ is a complete CAT(0) space. It is also
clear that $X$ does not satisfy a bottleneck condition, that $\ibd X=\bbd X$
is a singleton set, and so $\me:\bbd X\to\ebd X$ is injective.
\end{rem}


\section{Sequential constructions}\label{S:seq}

In this section we show that in a Gromov hyperbolic length space, the
bouquet boundary can be naturally identified with the Gromov boundary. Since
the Gromov boundary is defined using sequences that ``march off to
infinity'', we embed this proof in a wider discussion of ways to define the
bouquet boundary in a general rCAT(0) space using such sequences.

As in the previous section, we can restrict these sequences in either a
tight or loose manner, depending on whether certain quantities are bounded
or grow more slowly than distance to the origin, and for the tight case we
can use a tight or loose equivalence. In this way we get three notions of
boundary at infinity. We will see that only two of them can be naturally
identified with the bouquet boundary, although all three of them can be
naturally identified with both the bouquet and the Gromov boundary in the
case of a Gromov hyperbolic length space. As in the previous section, we are
only talking about set theoretic identifications: we discuss an associated
topology in Section~\ref{S:top}.

We first record two lemmas. The first can be proved in the same manner as
its short arc variant in \cite[2.33]{Va}.

\begin{lem}\label{L:Vais}
Suppose that $\la$ is a $h$-short path from $x$ to $y$ in a metric space
$X$, and that $z\in X$. Then $\ip xyz\le\dist(z,\la)+h/2$. If additionally
$X$ is $\d$-hyperbolic, then $\dist(z,\la)\le\ip xyz+h+2\d$.
\end{lem}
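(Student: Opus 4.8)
The plan is to treat the two inequalities separately: the first is a direct consequence of the triangle inequality and the $h$-shortness of $\la$, while the second (the genuinely hyperbolic estimate) requires locating a distinguished point on $\la$ by an intermediate value argument. For the first inequality I would fix an arbitrary point $w\in\la$. The two subpaths of $\la$ from $x$ to $w$ and from $w$ to $y$ have lengths summing to $\len(\la)\le d(x,y)+h$, so $d(x,w)+d(w,y)\le d(x,y)+h$. Combining this with $d(x,z)\le d(x,w)+d(w,z)$ and $d(y,z)\le d(w,y)+d(w,z)$ gives $d(x,z)+d(y,z)\le d(x,y)+h+2d(w,z)$, that is, $2\ip xyz\le h+2d(w,z)$. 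Taking the infimum over $w\in\la$ yields $\ip xyz\le\dist(z,\la)+h/2$, as desired.

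For the second inequality I would first produce a point $w$ on $\la$ at which the two ``half'' Gromov products balance. Parametrize $\la$ by arclength as $\la\colon[0,L]\to X$ with $\la(0)=x$ and $\la(L)=y$, and set $p(t):=\ip{x}{\la(t)}{z}$ and $q(t):=\ip{\la(t)}{y}{z}$; both are continuous, since $\la$ is continuous and each coordinate distance is $1$-Lipschitz. At the endpoints one computes $p(0)=d(x,z)$, $q(0)=\ip xyz$, $p(L)=\ip xyz$, and $q(L)=d(y,z)$. The elementary bound $\ip xyz\le d(x,z)\mino d(y,z)$ then forces $p(0)-q(0)\ge 0$ and $p(L)-q(L)\le 0$, so by the intermediate value theorem there is $t^*$ with $p(t^*)=q(t^*)=:P$. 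I set $w:=\la(t^*)$.

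With $w$ in hand the rest is short. Applying $\d$-hyperbolicity to $x,w,y$ based at $z$ gives $\ip xyz\ge\ip{x}{w}{z}\mino\ip{w}{y}{z}-\d=P-\d$, hence $P\le\ip xyz+\d$. Adding the defining identities $2\ip{x}{w}{z}=d(x,z)+d(w,z)-d(x,w)$ and $2\ip{w}{y}{z}=d(w,z)+d(y,z)-d(w,y)$ and using $d(x,w)+d(w,y)\le\len(\la)\le d(x,y)+h$ yields $2d(w,z)\le 4P-2\ip xyz+h$, whence $d(w,z)\le\ip xyz+h/2+2\d$. Since $w\in\la$, this gives $\dist(z,\la)\le\ip xyz+h/2+2\d\le\ip xyz+h+2\d$. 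The one delicate step is the location of the balancing point $w$, so the intermediate value argument is the crux; once $w$ is chosen, both the hyperbolic inequality and the arithmetic are routine. I would note in passing that this argument in fact delivers the slightly sharper constant $h/2$ in place of $h$.
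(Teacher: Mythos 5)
Your proof is correct and follows essentially the same route as the proof the paper invokes: the paper gives no argument of its own but cites the short-arc variant in V\"ais\"al\"a \cite[2.33]{Va}, whose proof uses exactly this subpath/triangle-inequality estimate for the first inequality and the same balance-point choice of $w$ (a point with $\ip{x}{w}{z}=\ip{w}{y}{z}$, found by continuity) combined with the four-point hyperbolicity condition for the second. Your closing remark is also accurate: the argument yields the sharper bound $\dist(z,\la)\le\ip xyz+h/2+2\d$, and the paper's stated constant $h+2\d$ is simply a weaker consequence of it.
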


The second lemma that we need is the so-called {\it Tripod Lemma} for
hyperbolic spaces. This version is as stated in \cite[2.15]{Va}, except that
again we are using short paths rather than short arcs.

\begin{lem}\label{L:Tripod}
Suppose that $\g_1$ and $\g_2$ are unit speed $h$-short paths from $o$ to
$x_1$ and $x_2$, respectively, in a $\d$-hyperbolic space. Let $u_1=\g_1(t)$
and $u_2=\g_2(t)$ for some $t\ge 0$, where $d(o,u_1)\le\IP<x_1,x_2;o>$. Then
$d(u_1,u_2)\le 4\d+2h$.
\end{lem}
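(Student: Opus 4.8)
The plan is to bound the Gromov product $\IP<u_1,u_2;o>$ from below and then read off $d(u_1,u_2)$ from the identity $2\IP<u_1,u_2;o>=d(o,u_1)+d(o,u_2)-d(u_1,u_2)$. Write $g:=\IP<x_1,x_2;o>$, so the hypothesis reads $d(o,u_1)\le g$. Since $\g_i$ is unit speed with $u_i=\g_i(t)$, both the subpath $\g_i|_{[0,t]}$ from $o$ to $u_i$ and the subpath $\g_i|_{[t,L_i]}$ from $u_i$ to $x_i$ are again $h$-short (a subpath of an $h$-short path is $h$-short). Hence $t-h\le d(o,u_i)\le t$, while $d(u_i,x_i)\le L_i-t$ and $d(o,x_i)\ge L_i-h$, where $L_i:=\len(\g_i)$.

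First I would establish the building block $\IP<u_i,x_i;o>\ge t-h$ for $i=1,2$. Indeed, $2\IP<u_i,x_i;o>=d(o,u_i)+d(o,x_i)-d(u_i,x_i)\ge (t-h)+(L_i-h)-(L_i-t)=2t-2h$, where the three estimates are exactly those collected above; this is the only place where the shortness of the paths is used.

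Next I would apply $\d$-hyperbolicity twice. Applied to the four points $u_1,x_1,x_2,o$, the defining inequality gives $\IP<u_1,x_2;o>\ge \IP<u_1,x_1;o>\mino \IP<x_1,x_2;o>-\d\ge (t-h)\mino g-\d$. Here the hypothesis $d(o,u_1)\le g$ is precisely what is needed: together with $d(o,u_1)\ge t-h$ it forces $g\ge t-h$, so the minimum collapses to $t-h$ and $\IP<u_1,x_2;o>\ge t-h-\d$. A second application, now to $u_1,x_2,u_2,o$, combined with the building block $\IP<x_2,u_2;o>\ge t-h$, yields $\IP<u_1,u_2;o>\ge \IP<u_1,x_2;o>\mino \IP<x_2,u_2;o>-\d\ge (t-h-\d)\mino(t-h)-\d=t-h-2\d$.

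Finally, substituting into the opening identity, $d(u_1,u_2)=d(o,u_1)+d(o,u_2)-2\IP<u_1,u_2;o>\le t+t-2(t-h-2\d)=2h+4\d$, which is the claim. The computation is routine; the only genuinely delicate point is the bookkeeping around the hypothesis $d(o,u_1)\le g$, which must be invoked at exactly the right moment to collapse the minimum in the first hyperbolicity step. Without it the bound would degrade to one involving $g$ rather than $t$, so correctly locating where it enters is the crux. I do not expect to need Lemma~\ref{L:Vais}, though one could alternatively route the argument through its estimate $\IP<x,y;z>\le\dist(z,\la)+h/2$ and the companion lower bound.
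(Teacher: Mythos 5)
Your proof is correct, and every step checks out: subpaths of $h$-short paths are $h$-short (so $t-h\le d(o,u_i)\le t$, $d(u_i,x_i)\le L_i-t$, $d(o,x_i)\ge L_i-h$), the building block $\IP<u_i,x_i;o>\ge t-h$ follows, the two applications of the $\d$-inequality are correctly instantiated, and the hypothesis $d(o,u_1)\le\IP<x_1,x_2;o>$ enters exactly where you say it must, collapsing the minimum to $t-h$; the final arithmetic gives precisely $4\d+2h$. The one thing to note in comparing with the paper is that the paper does not prove this lemma at all: it is quoted from V\"ais\"al\"a's tripod lemma \cite[2.15]{Va}, with only the remark that $h$-short paths replace $h$-short arcs. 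So your argument is not an alternative to the paper's proof so much as a replacement for the proof the paper omits. What you have written is essentially the standard proof of the tripod lemma (Gromov-product bookkeeping along the two short paths, then two uses of hyperbolicity), and it has the added value of verifying explicitly that the arc-to-path generalization claimed by the authors goes through with the same constant -- nothing in your argument uses injectivity of the paths, which is exactly the point the paper asserts without proof. Your closing remark is also accurate: Lemma~\ref{L:Vais} is not needed, though it would give an alternative route at the cost of slightly worse bookkeeping.
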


As defined in Definition~\ref{D:Gromov seq}, the notion of a sequence that
``marches off to infinity'' is played by a Gromov sequence, defined as a
sequence $(x_n)$ such that $\IP<x_m,x_n;o>\to\infty$ as $m,n\to\infty$, and
the Gromov boundary $\Gbd X$ is defined by an associated equivalence
relation. This definition is not however consistent with the bouquet
boundary; see Proposition~\ref{P:Gb bad1} below. Instead we proceed as
follows.

If $\b$ is a loose $(\d,D)$-bouquet with initial point $o$ in an rCAT(0)
space $X$, and $x=f(\b)$ is the sequence of tips of $\b$ (so
$x=(\b_n(L_n))$), then it follows from Lemma~\ref{L:Vais} that
$$
\IP<o,x_n;x_m>\le\d'(d(o,x_m))\,, \qquad
  \text{for all } m,n\in\N,\; m\le n\,.
$$
where $\d'(t)=\d(t)+1+1/2$. Notice that the term $1$ in $\d'(t)$ bounds the
difference $|\d(L_m)-\d(d(o,x_m))|\le|L_m-d(o,x_m)|$, where we use the
Lipschitz property of $\d$; we make similar estimates in future without
comment. The term $1/2$ is an upper bound for $D(d(o,x_n))$; it could of
course be replaced by $D(d(o,x_1))/2\le 1/2$.

\begin{dfn} Given a little-o function $\d$, a {\it loose $\d$-bouquet
sequence (with basepoint $o$)} is a sequence $(x_n)_{n=1}^\infty$ in an
rCAT(0) space $X$ such that $(d(o,x_n))_{n=1}^\infty$ is an unbounded
monotonically increasing sequence, and such that
$$
\IP<o,x_n;x_m>\le\d(d(o,x_m))\,, \qquad
  \text{for all } m,n\in\N,\; m\le n\,.
$$
As before, we define a {\it $c$-bouquet sequence (with basepoint $o$)} as
above but with $\d$ equal to some constant function $c\ge 0$. We denote by
$\lbqs$ and $\nbqs$ the sets of loose bouquet sequences and bouquet
sequences, respectively, in both cases with basepoint $o$; we omit the
basepoint from this notation except in Observation~\ref{O:bs oo'} where we
note that these notions are essentially independent of the basepoint.
\end{dfn}

By the discussion before the above definition, we see that if $\b$ is a
loose $(\d,D)$-bouquet with initial point $o$ in an rCAT(0) space $X$, and
$x=f(\b)$ is the sequence of tips of $\b$ (so $x=(\b_n(L_n))$), then $x$ is
a loose $\d'$-bouquet sequence with basepoint $o$, with $\d'$ as above. We
call $f:\lbq\to\lbqs$ the {\it tip map} of $X$ (at basepoint $o$); $f$ also
maps $\nbq$ to $\nbqs$.

We now define the associated notions of boundary at infinity in the natural
way. Although these notions are defined in terms of a basepoint $o$, they
will turn out to be independent of $o$; see Observation~\ref{O:bs oo'}.

\begin{dfn}\label{D:l bouquet seq bdy}
Fixing a basepoint $o\in X$, we define the {\it loose bouquet sequence
boundary of $X$}, $\lbsbd X$, to be the set of equivalence classes given by
all loosely asymptotic bouquet sequences with basepoint $o$, where two loose
bouquet sequences $x=(x_n)$ and $y=(y_n)$ are {\it loosely asymptotic},
written $x\sim_{\rLS} y$, if there exists a little-o function $\d$ such that
$$
\IP<o,x_m;y_n>\mino\IP<o,y_n;x_m> \le \d(d(o,x_m)\mino d(o,y_n))\,,
  \qquad m,n\in\N\,,
$$
We denote by $[x]_{\rLS}$ the equivalence class in $\lbsbd X$ containing a
given loose bouquet sequence $x$.
\end{dfn}

\begin{dfn}\label{D:bouquet seq bdy}
Similarly, we define the {\it bouquet sequence boundary of $X$}, $\bsbd X$,
to be the set of equivalence classes given by all asymptotic bouquet
sequences with basepoint $o$, where two bouquet sequences $x=(x_n)$ and
$y=(y_n)$ are {\it asymptotic}, written $x\sim_{\rS} y$, if they are loosely
asymptotic for $\d$ equal to some constant function $K\ge 0$. We denote by
$[x]_\rS$ the equivalence class in $\bsbd X$ containing a given bouquet
sequence $x$.
\end{dfn}

We now make a couple of observations about ways of constructing new (loose)
bouquet sequences that are equivalent to a given bouquet sequence.

\begin{obs}\label{O:bs subs}
Any subsequence of a (loose) bouquet sequence $x$ is a (loose) bouquet
sequence that is (loosely) asymptotic to $x$; we refer to such a subsequence
as a {\it (loose) bouquet subsequence}.
\end{obs}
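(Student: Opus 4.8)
The plan is to verify the two assertions---that a subsequence is again a (loose) bouquet sequence, and that it is (loosely) asymptotic to the original---directly from the definitions, exactly as in the path-bouquet analogue Observation~\ref{O:b subs}. No rCAT(0) geometry is needed; the statement is a purely metric bookkeeping fact about the Gromov-product inequalities defining these sequences. Throughout, let $x=(x_n)$ be a loose $\d$-bouquet sequence with basepoint $o$ and let $x'=(x_{n_k})_{k=1}^\infty$ be a subsequence, so that $(n_k)$ is strictly increasing.

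First I would check that $x'$ is a loose $\d$-bouquet sequence for the same $\d$. The sequence $(d(o,x_{n_k}))_k$ is a subsequence of the monotonically increasing unbounded sequence $(d(o,x_n))_n$, hence is itself monotonically increasing and unbounded. For indices $k\le l$ we have $n_k\le n_l$, so the required inequality $\IP<o,x_{n_l};x_{n_k}>\le\d(d(o,x_{n_k}))$ is simply the defining inequality of $x$ with $m=n_k$ and $n=n_l$. In the tight case $\d\equiv c$ is constant, so $x'$ is a $c$-bouquet sequence.

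Next I would establish loose asymptoticity, claiming that the same little-o function $\d$ witnesses $x'\sim_{\rLS}x$; that is, for all $m,k\in\N$ one has $\IP<o,x_m;x_{n_k}>\mino\IP<o,x_{n_k};x_m>\le\d(d(o,x_m)\mino d(o,x_{n_k}))$. I would argue by a case split designed to match the controlled term of the minimum on the left with the argument of $\d$ on the right. If $m\le n_k$, the defining inequality of $x$ gives $\IP<o,x_{n_k};x_m>\le\d(d(o,x_m))$, which bounds the left-hand minimum, while monotonicity of $n\mapsto d(o,x_n)$ gives $d(o,x_m)\mino d(o,x_{n_k})=d(o,x_m)$, so the two sides agree. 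If instead $m>n_k$, the symmetric instance gives $\IP<o,x_m;x_{n_k}>\le\d(d(o,x_{n_k}))$ while $d(o,x_m)\mino d(o,x_{n_k})=d(o,x_{n_k})$, again yielding the claim. In the tight case this exhibits $x'\sim_{\rS}x$ with constant little-o function $c$.

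There is no genuine obstacle here; the only point requiring care is the alignment in the case analysis---because the loose-asymptoticity condition evaluates $\d$ at the minimum $d(o,x_m)\mino d(o,x_{n_k})$, one must select whichever of the two Gromov products $\IP<o,x_m;x_{n_k}>$, $\IP<o,x_{n_k};x_m>$ is the one controlled by the bouquet-sequence inequality, namely the product based at the point of smaller origin distance. The monotonicity of the origin distances makes this matching automatic, so the same $\d$ serves throughout and no additional growth estimate on $\d$ is needed.
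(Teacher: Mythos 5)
Your proof is correct: the paper states this as an Observation with no written proof precisely because the verification is the direct one you give — a subsequence inherits the monotone unbounded origin-distances and the defining Gromov-product inequality verbatim, and the case split on whether $m\le n_k$ aligns the controlled product $\IP<o,x_{n_k};x_m>$ or $\IP<o,x_m;x_{n_k}>$ with the minimum $d(o,x_m)\mino d(o,x_{n_k})$, so the same $\d$ (or constant $c$) witnesses asymptoticity. This matches the intended argument exactly, including the observation that no rCAT(0) geometry is needed.
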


\begin{obs}\label{O:bs oo'}
A $c$-bouquet sequence $x=(x_n)$ with basepoint $o$ may not be a bouquet
sequence for another basepoint $o'$, since $d(o',x_n)$ might not be
increasing. However, by thinning out $x$ to get a subsequence $y=(y_n)$
where $d(o,y_{n+1})\ge d(o,y_n)+2d(o,o')$, it follows easily from the
triangle inequality that $y$ is a $c'$-bouquet sequence with basepoint $o'$,
where $c'=c+d(o,o')$, and that $y\sim_\rS x$. In a similar fashion, by
taking a suitable subsequence of $x\in\lbqso{o}$, we get a sequence
$y\in\lbqso{o'}$, with $y\sim_\rLS x$. Thus $\bsbd X$ and $\lbsbd X$ are
independent of the basepoint $o$.
\end{obs}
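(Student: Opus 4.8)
The plan is to fix $p:=d(o,o')$ and to show that thinning out a bouquet sequence based at $o$ produces a bouquet sequence based at $o'$ representing the same boundary point. First I would pass from $x=(x_n)$ to a subsequence $y=(y_n)$ with $d(o,y_{n+1})\ge d(o,y_n)+2p$, which is possible since $d(o,x_n)\to\infty$; by Observation~\ref{O:bs subs} this subsequence is asymptotic to $x$ at $o$. To check that $y$ is a bouquet sequence at $o'$, I use $|d(o',z)-d(o,z)|\le p$ for every $z\in X$, so that
$$ d(o',y_{n+1})\ge d(o,y_{n+1})-p\ge d(o,y_n)+2p-p\ge d(o',y_n)\,, $$
giving monotonicity, while $d(o',y_n)\ge d(o,y_n)-p\to\infty$ gives unboundedness.

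The core estimate is the behaviour of the Gromov product under a change of basepoint. Expanding $2\IP<o',y_n;y_m>$ and $2\IP<o,y_n;y_m>$ via $2\ip{a}{b}{w}=d(a,w)+d(b,w)-d(a,b)$ and subtracting, the term $d(y_n,y_m)$ cancels, leaving
$$ 2\IP<o',y_n;y_m>-2\IP<o,y_n;y_m> = \big(d(o',y_m)-d(o,y_m)\big)-\big(d(o',y_n)-d(o,y_n)\big)\,, $$
with each bracket bounded in absolute value by $p$. Hence $\IP<o',y_n;y_m>\le\IP<o,y_n;y_m>+p\le c+p$ for $m\le n$, so $y$ is a $c'$-bouquet sequence at $o'$ with $c'=c+p$. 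The same identity shows that $\sim_{\rS}$ is insensitive to the basepoint up to an additive constant: rewriting the tight asymptoticity condition at $o$ as $d(x_m,x_n')\le|d(o,x_m)-d(o,x_n')|+2K$ and applying $|d(o,z)-d(o',z)|\le p$ to each radial term yields the same inequality at $o'$ with $K$ replaced by $K+p$.

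With these estimates I would define the basepoint-change map on boundaries by $[x]_{\rS}\mapsto[y]_{\rS}$, where $y$ is any admissible thinning of $x$. The second paragraph shows this is well defined and independent of the thinning (take $x=x'$ in the asymptoticity estimate), and that asymptotic classes at $o$ go to asymptotic classes at $o'$; the symmetric construction provides an inverse, so the map is a bijection $\bsbd X\to\bsbd X$. The loose case is structurally identical: the additive error $2p$ incurred at each step is a constant, hence absorbed by replacing the little-o function $\d$ by $\d+2p$, which is again monotone, $1$-Lipschitz, and satisfies $(\d(t)+2p)/t\to0$; the Lipschitz property of $\d$ simultaneously controls the change in the argument $d(o,x_m)\mino d(o,y_n)$ when switching to $o'$. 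This yields the bijection $\lbsbd X\to\lbsbd X$, and hence independence of the basepoint.

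The main obstacle is bookkeeping rather than conceptual: one must confirm at the same time that the thinning preserves monotonicity and unboundedness of the radial distances at the \emph{new} basepoint, and that both the defining Gromov-product bound and the asymptoticity relation survive the switch with only a constant (respectively little-o) penalty. The one point needing a little care in the loose case is verifying that $\d+2p$ still satisfies every requirement of Definition~\ref{D:little o}, so that the perturbed data genuinely remains a loose bouquet sequence.
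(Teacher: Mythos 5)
Your proof is correct and takes essentially the same route as the paper: the identical thinning condition $d(o,y_{n+1})\ge d(o,y_n)+2d(o,o')$, the same triangle-inequality transfer of the defining Gromov-product bound (yielding the paper's constant $c'=c+d(o,o')$), and absorption of the resulting additive penalty into the little-o function in the loose case. Your explicit basepoint-change identity for the Gromov product, the reformulation $d(x_m,y_n)\le|d(o,x_m)-d(o,y_n)|+2K$ of asymptoticity, and the verification that the induced map on boundary classes is a well-defined bijection simply fill in the details that the paper compresses into ``it follows easily from the triangle inequality.''
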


\begin{thm}\label{T:bouq bij 2}
Suppose $X$ is an rCAT(0) space. Then the tip map $f:\lbq\to\lbqs$ induces a
natural bijection $\is:\lbbd X\to\lbsbd X$. Consequently, $\lbsbd X$ is
naturally bijective with $\bbd X$, and also with $\ibd X$ if $X$ is CAT(0).
\end{thm}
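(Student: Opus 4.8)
The plan is to isolate the content of the first sentence, since the ``consequently'' clause then follows formally: once $\is\colon\lbbd X\to\lbsbd X$ is known to be a bijection, composing it with the bijection $\il\colon\bbd X\to\lbbd X$ of Theorem~\ref{T:bouq bij 1} identifies $\lbsbd X$ with $\bbd X$, and composing further with the bijection $\ii\colon\ibd X\to\bbd X$ of Theorem~\ref{T:CAT0 bij} (valid when $X$ is a complete CAT(0) space) handles $\ibd X$. So I would concentrate on the three assertions that $f$ descends to a well-defined $\is$, and that $\is$ is injective and surjective. All three rest on a single \emph{tracking principle}: if $u,v$ are points whose Gromov product at the nearer of them is small, then $D$-short paths from $o$ to $u$ and to $v$ stay close, in a little-$o$ sense, up to the length of the shorter path.

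For well-definedness I would argue directly, and this is the one step needing only the first (metric-space) half of Lemma~\ref{L:Vais}. Suppose $\b^1\sim_\rL\b^2$ with tip sequences $x=f(\b^1)$, $y=f(\b^2)$, and fix $m,n$; by symmetry assume $d(o,x_m)\le d(o,y_n)$, so the minimum of the two relevant Gromov products is $\IP<o,y_n;x_m>$. Applying Lemma~\ref{L:Vais} to the short path $\b_n^2$ from $o$ to $y_n$ and the point $x_m$ gives $\IP<o,y_n;x_m>\le\dist(x_m,\b_n^2)+\tfrac12$. Since $x_m=\b_m^1(L_m^1)$ with $L_m^1\le L_n^2+O(1)$, loose asymptoticity bounds $\dist(x_m,\b_n^2)$ by $\d(L_m^1)+O(1)$, which the Lipschitz condition converts into a little-$o$ bound in $d(o,x_m)=d(o,x_m)\mino d(o,y_n)$. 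Hence $x\sim_\rLS y$, and $\is$ is well defined.

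For surjectivity and injectivity I would combine the tracking principle with the observation that two loose bouquets with the same sequence of tips are loosely asymptotic. Given a loose bouquet sequence $x=(x_n)$, choosing $D$-short paths $\b_n$ from $o$ to $x_n$ and applying the tracking principle to each pair $x_m,x_n$ (with $m\le n$, where the small quantity $\IP<o,x_n;x_m>\le\d(d(o,x_m))$ is exactly the defining bound) shows that $(\b_n)$ satisfies condition~(iii), hence is a loose bouquet with $f((\b_n))=x$; this gives surjectivity. For injectivity, if $f(\b^1)\sim_\rLS f(\b^2)$, I would replace each $\b^i$ by the loose bouquet $\hat\b^i$ of $D$-short paths to its own tips: the same-tips observation gives $\b^i\sim_\rL\hat\b^i$, while the tracking principle applied across the two sequences (here the loose-asymptoticity bound on the tips supplies the required smallness of the near-tip Gromov product) gives $\hat\b^1\sim_\rL\hat\b^2$; transitivity of $\sim_\rL$ then yields $\b^1\sim_\rL\b^2$.

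The crux is the tracking principle, and the obstacle is that the comparison triangle on $o$ and the two tips $u,v$ (say $d(o,u)\le d(o,v)$) is too large for the rCAT(0) condition: the near path to $u$ is a fixed $D$-short path whose excess need not be anywhere near $H(o,u,v)\approx 1/d(o,v)$. My proposed way around this is that the quantity I must bound, namely $d(u,w)$ with $w$ the point of $\b_v$ at arclength $d(o,u)$, depends only on the points, not on which short path to $u$ I use. So I would form the comparison triangle on $o,u,v$ from an auxiliary \emph{nearly geodesic} path from $o$ to $u$ of excess at most $H(o,u,v)$ (available in any length space), together with $\b_v$ and a sufficiently short path from $u$ to $v$; all three sides are then short enough, and since $\IP<o,v;u>$ small forces the comparison angle at $\bar o$ to be small, a cosine-rule estimate yields $d(u,w)\le 2\sqrt{d(o,u)\,\IP<o,v;u>}+C$, a little-$o$ bound. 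Finally I would transport this from the parameter $d(o,u)$ to all smaller parameters via the rough convexity of Lemma~\ref{L:rough cx}, applied to the genuine $D$-short initial subsegments of $\b_u,\b_v$ (legitimately short by Lemma~\ref{L:subseg}), replacing the resulting little-$o$ function by a concave majorant so the $\tfrac{s}{d(o,u)}$ factor can be absorbed. I expect the bookkeeping around the auxiliary triangle's $H$-bound — checking $\max\{d(o,u),d(o,v),d(u,v)\}=d(o,v)$ once the Gromov product is small — to be the fussiest part, but it is routine.
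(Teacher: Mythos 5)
Your proposal is correct, and its skeleton coincides with the paper's: the ``consequently'' clause is dispatched formally via Theorems~\ref{T:bouq bij 1} and~\ref{T:CAT0 bij} (the paper cites Corollary~\ref{C:5 equiv} and Theorem~\ref{T:CAT0 bij}), and well-definedness of $\is$ is proved exactly as you do, by Lemma~\ref{L:Vais} plus the Lipschitz property of little-o functions. The genuine difference lies in how the key estimate --- your ``tracking principle'' --- is obtained. The paper never builds a comparison triangle there: it invokes the displayed weak-rCAT(0) inequality \eqref{E:wrCAT0} (flagged in Section~\ref{S:rcat0} as holding \emph{a fortiori} in rCAT(0) spaces precisely for this use), applied with vertex $x_m$ and the $D$-short side $\b_n^2$; the small Gromov product enters by making the concatenation of $\b_m^1$ with a short path from $x_m$ to $y_n$ a $K$-short path with $K=2\d(L_m^1)+2$, which bounds $d(x_m,y_n)$, and algebra yields $(d(x_m,u)-C)^2\le 2+2sK+K^2$ --- the same order $\sqrt{s\,\d(s)}$ as your cosine-rule bound $2\sqrt{d(o,u)\ip{o}{v}{u}}+C$. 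Your auxiliary near-geodesic side is in effect a re-derivation of \eqref{E:wrCAT0}: that inequality involves only one genuine path, the remaining sides of the triangle being completable by arbitrarily short paths, which is exactly your workaround for the ``triangle too large'' obstacle; citing \eqref{E:wrCAT0} directly would spare you the bookkeeping you anticipate as fussiest. Both arguments then descend to smaller parameters via Lemma~\ref{L:rough cx} with the same monotone-majorant adjustment. A second, smaller difference: for injectivity the paper takes interleaved subsequences of the two length sequences to decide which of the two Gromov products loose asymptoticity controls, whereas you use the cleaner observations that the minimum product automatically sits at the nearer tip and that same-tips bouquets are loosely asymptotic, finishing by transitivity of $\sym_\rL$; this also makes surjectivity (which the paper leaves to the reader) an immediate corollary of the tracking principle. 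The only loose ends --- passing to a subsequence so that condition (ii) of Definition~\ref{D:loose bouquet} holds, and massaging $2\sqrt{t\,\d(t)}+C$ into a genuine little-o function in the sense of Definition~\ref{D:little o} --- are routine and no worse than what the paper itself elides.
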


\begin{proof}
The first statement of our theorem follows from the claim that
$\is([\b]_{\rL})=[x]_{\rLS}$ is a well-defined map, where $\b=(\b_n)\in\lbq$
and $x=(x_n)=f(\b)$. Because of Observations \ref{O:b subs} and \ref{O:bs
subs}, it suffices to prove the claim after taking any desired subsequence,
so we may assume that the length of $\b_n$ grows as fast as desired.

To establish the claim, we suppose $\b^1=(\b_n^1)$ and $\b^2=(\b_n^2)$ are
loosely asymptotic loose $(\d,D)$-bouquets, with $\d_1$ being the little-o
function in Definition~\ref{D:loose-bouquet-bdy} for this pair of loose
bouquets, and we denote the length of $\b_n^i$ by $L_n^i$, $i=1,2$. Let
$x=(x_n)=f(\b^1)$ and $y=(y_n)=f(\b^2)$. Let $o$ and $o'$ be the initial
points of $\b^1$ and $\b^2$, respectively. We know that $x$ and $y$ are
loose bouquet sequences with basepoints $o$ and $o'$, respectively, and so
both are loose bouquet sequences from $o$ once we suitably thin out $\b^2$
(and so $y$), as we may.

Fixing arbitrary $m,n\in\N$, assume first that $d(o,x_m)\le d(o,y_n)$. Now
$$
\dist(x_m,\b_n^2) \le \dist(x_m,\b_n^2(L_m^1)) \le \d(L_m^1)\,.
$$
By Lemma~\ref{L:Vais}, we deduce that $\IP<o',y_n;x_m>\le \d(L_m^1)+1/2$,
and so
$$
\IP<o,y_n;x_m>\le \d(L_m^1)+d(o,o')+1/2 \le \d(d(o,x_m))+d(o,o')+3/2\,.
$$
If instead $d(o,x_m)> d(o,y_n)$, we similarly get
$$ \IP<o,x_m;y_n> \le \d(d(o,y_n))+3/2\,. $$
Since $m,n$ are arbitrary, it follows that $x$ and $y$ are loosely
asymptotic and the claim is established.

We next show that $\is$ is injective. Suppose
$\is([\b^1]_{\rL})=\is([\b^2]_{\rL})$, where $x=(x_n)$, $y=(y_n)$, and
$(L_n^i)$, $i=1,2$, are as before, except now instead of assuming that
$\b^1$ and $\b^2$ are loosely asymptotic, we want to prove this. By taking
subsequences if necessary, we assume that
$$
L_{n+1}^1 > L_n^2+2+\d(L_n^2) > L_n^1+4+\d(L_n^2)+\d(L_n^1)\,,
  \qquad n\in\N\,,
$$
where $\d$ is the loose asymptoticity parameter for $x,y$ in
Definition~\ref{D:l bouquet seq bdy}. By the triangle inequality, this last
pair of inequalities ensures that $\IP<o,x_m;y_n> > \d(L_m^1)$ whenever
$n\ge m$ and $\IP<o,y_n;x_m>
> \d(L_n^2)$ whenever $m>n$, so loose asymptoticity of $x$ and $y$
tells us that $\IP<o,y_n;x_m> \le \d(L_m^1)$ whenever $n\ge m$ and
$\IP<o,x_m;y_n> \le \d(L_n^2)$ otherwise.

To show that $\b^1,\b^2$ are loosely asymptotic, we bound
$d(\b_m^1(t),\b_n^2(t))$ by a suitably little-o quantity for $m\le n$ and
$t\le L_m^1$; the corresponding result when $m>n$ is handled similarly. It
suffices to establish such a bound when $t=L_m^1$ (and so $\b_m^1=x_m$),
since a corresponding bound for smaller $t$ follows from Lemma~\ref{L:rough
cx}. Without loss of generality, we assume that $d(o,x_m)\ge 1$ and
$d(o,y_n)\ge 1$.

Since $\IP<o,y_n;x_m>\le \d(L_m^1)$, the concatenation of $\b_m^1$ and a
$D$-short path $\g$ from $x_m$ to $y_n$ gives a $K$-short path from $o$ to
$y_n$, where $K:=2\d(L_m^1)+2$. Inequality \eqref{E:wrCAT0} says that
$$
(d(x_m,u)-C)^2 \le (1-t)(d(x_m,o))^2+t(d(x_m,y_n))^2-t(1-t)(d(o,y_n))^2\,,
$$
where $u:=\b_n^2(s)$, $s:=L_m^1$, and $0\le t\le 1$ is any number satisfying
$td(o,y_n)\le s$ and $(1-t)d(o,y_n)\le L-s$. This last pair of inequalities
holds for $t=s/L$, where $L:=L_n^2$. Writing $\D:=d(o,y_n)$, we note that
$d(x_m,o)\le s$ and $d(x_m,y_n)\le \len(\g)\le d(o,y)+K-s\le L+K-s$, so
\begin{equation}\label{E:prelim}
(d(x_m,u)-C)^2 \le s^2(1-s/L)+(s/L)(L-s+K)^2-t(1-t)\D^2\,.
\end{equation}
Contrasting the inequalities $t\D\le s$ and $(1-t)\D\le L-s$ with $\D\ge
L-1/\D$, we see that we can almost reverse the first two inequalities:
$t\D\ge s-1/\D$ and $(1-t)\D\ge L-s-1/\D$. Thus
$$ t(1-t)\D^2 \ge (s-1/\D)(L-s-1/\D) \ge s(L-s) - 2\,. $$
Since also $s^2(1-s/L)+(s/L)(L-s)^2 = s(L-s)$, it follows from
\eqref{E:prelim} that
$$
(d(x_m,u)-C)^2 \le 2 + s(2(L-s)K+K^2)/L \le  2 + 2sK + K^2\,.
$$
From this it readily follows that $d(x_m,u)$ is a little-o function of $s$,
as required.

Next we prove that $\is$ is surjective. Fix a loose bouquet sequence
$x=(x_n)$. By taking a subsequence if necessary, we may assume that
$d(o,x_{n+1})\ge d(o,x_n)+1$ for all $n$. We now construct unit speed
$D$-short paths $\b_n$ from $o$ to $x_n$ for each $n$ and claim that the
resulting sequence $\b=(\b_n)$ is a loose bouquet such that
$\is([\b]_{\rL})=[x]_{\rL}$. The only non-trivial part of the proof is that
$d(\b_m(t),\b_n(t))$ is dominated by some little-o function of $t$. The
proof of this goes along the same lines as the proof of injectivity, so we
leave it to the reader.

The final statement in the theorem follows from Corollary~\ref{C:5 equiv}
and Theorem~\ref{T:CAT0 bij}.
\end{proof}

\begin{rem}\label{R:tip}
A fact that will be useful in the next section is that the restricted tip
map $g:=f|_{\sbq}$ induces the bijection $\is:\lbbd X\to\lbsbd X$. This
follows from Corollary~\ref{C:5 equiv} and the proof of Theorem~\ref{T:bouq
bij 2}. Thus the following diagram commutes, where surjectivity is as usual
denoted by a double arrow.
$$
\xymatrix{ \sbq \ar@{->>}[d] \ar@{->>}[r]^g & g(\sbq) \ar@{->>}[d]
\ar@{^{(}->}[r]
  & \lbqs \ar@{->>}[dl] \\
\lbbd X \ar@{^{(}->>}[r]^{\is} & \lbsbd X
}
$$
\end{rem}

\begin{thm}\label{T:bouq surj}
Suppose $X$ is an rCAT(0) space. Then the map of a bouquet sequence to
itself defines a natural surjection $\ms:\bsbd X\to\lbsbd X$.
\end{thm}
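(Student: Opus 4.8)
The plan is to view $\ms$ as the map induced by the set inclusion $\nbqs\subset\lbqs$. This inclusion is legitimate because a $c$-bouquet sequence is precisely a loose $\d$-bouquet sequence for the constant little-o function $\d\equiv c$. With this identification, well-definedness and naturality of $\ms$ require no work: if $x\sim_{\rS} y$ then, by Definition~\ref{D:bouquet seq bdy}, $x$ and $y$ are loosely asymptotic for a constant little-o function, hence \emph{a fortiori} $x\sim_{\rLS} y$; thus $[x]_{\rS}\mapsto[x]_{\rLS}$ descends to a well-defined map $\ms:\bsbd X\to\lbsbd X$, and being induced by an inclusion it is natural. The only substantive claim is therefore surjectivity.

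Before constructing the required preimages, I would point out the obstacle that rules out the naive strategy. One cannot convert an arbitrary loose bouquet sequence $z=(z_n)$ into a bouquet sequence simply by thinning it: passing to a subsequence does not decrease any Gromov product $\IP<o,z_n;z_m>$, and for a genuine loose bouquet sequence these products are controlled only by $\d(d(o,z_m))$, a quantity that still tends to infinity along every subsequence. Hence a subsequence of a loose bouquet sequence is again only a loose bouquet sequence, and a genuinely new representative must be produced.

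To produce one, I would route the argument through the bouquet side of the theory by means of the tip map. Fix a class $[z]_{\rLS}\in\lbsbd X$. By Remark~\ref{R:tip} the restricted tip map $g=f|_{\sbq}$ induces the bijection $\is:\lbbd X\to\lbsbd X$, so the composite $\sbq\ni\b\mapsto[f(\b)]_{\rLS}$ is surjective onto $\lbsbd X$; choose a standard bouquet $\b$ with $[f(\b)]_{\rLS}=[z]_{\rLS}$. The crucial observation is that the tip sequence $x:=f(\b)$ is not merely a loose bouquet sequence but an honest one. Indeed, a standard bouquet is a $(2C+2,D)$-bouquet, so the estimate recorded just before the definition of a bouquet sequence yields $\IP<o,x_n;x_m>\le\d'(d(o,x_m))$ with $\d'(t)=\d(t)+3/2$; here $\d\equiv 2C+2$ is constant, so $\d'\equiv 2C+7/2$ is constant, and the geometric growth $L_n=(2C+2)^n$ forces $(d(o,x_n))$ to be increasing and unbounded. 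Thus $x\in\nbqs$, and $\ms([x]_{\rS})=[x]_{\rLS}=[z]_{\rLS}$, which proves surjectivity.

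I expect the main difficulty to be conceptual rather than computational: the heart of the matter is the observation of the second paragraph, namely that looseness cannot be removed by thinning, so one is forced to manufacture a fresh representative. Once this is recognized, the construction is pure bookkeeping with the already-established bijection $\is$ together with the elementary fact that the tips of a constant-parameter standard bouquet inherit a constant Gromov-product bound. An alternative to invoking Remark~\ref{R:tip} directly would be to take $\is^{-1}([z]_{\rLS})\in\lbbd X$ and use Corollary~\ref{C:5 equiv} to represent it by a standard bouquet before applying the tip map; the outcome is the same.
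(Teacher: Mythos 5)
Your proof is correct and takes essentially the same route as the paper: the paper also passes to the bouquet side (lifting the given loose bouquet sequence to a loose bouquet via the surjectivity construction of Theorem~\ref{T:bouq bij 2}, pruning it to a genuine bouquet as in Theorem~\ref{T:bouq bij 1}, and taking tips), which is exactly the content you invoke in packaged form through Remark~\ref{R:tip} and Corollary~\ref{C:5 equiv}. The only difference is presentational: the paper re-runs that construction starting from a given loose bouquet sequence, while you appeal to the already-established bijection $\is$ together with the observation that the tips of a standard bouquet form a genuine (constant-parameter) bouquet sequence.
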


\begin{proof}
The fact that $\ms$ is well-defined is trivial. As for surjectivity, suppose
that $x=(x_n)$ is a loose $\d$-bouquet sequence. By taking a subsequence if
necessary, we may assume that $d(o,x_{n+1})\ge d(o,x_n)+1$ for all $n$. By
the proof of Theorem~\ref{T:bouq bij 2}, $x$ is the sequence of tips of a
loose $(\d',D)$-bouquet $\b$ (for any short function $D$ of our choice, and
some $\d'$ dependent only on $\d$ and $D$). Let $\b'=(\b_n')$ be a bouquet
obtained by pruning $\b$, as in the surjectivity part of the proof of
Theorem~\ref{T:bouq bij 1}; we denote by $x'=(x_n')$ the associated sequence
of tips, and let $K$ be the constant of asymptoticity as in
Definition~\ref{D:bouquet-bdy}. Now $\b$ is loosely asymptotic to $\b'$, so
the associated sequences of tips $x$ and $x'$ are loosely asymptotic by the
proof that $\is$ is well-defined in Theorem~\ref{T:bouq bij 2}.
\end{proof}

\begin{rem}\label{R:bs lbs}
We have proved in the above theorem that the following diagram of natural
maps commutes, with injections and surjections as indicated. Note that the
vertical maps are quotient maps, and that the map from $\nbqs$ to $\lbqs$ is
the identity map. Note also that the natural (composition) map from $\nbqs$
to $\lbsbd X$ is surjective.
$$
\xymatrix{ \nbqs \ar@{->>}[d] \ar@{^{(}->}[r] & \lbqs \ar@{->>}[d] \\
\bsbd X \ar@{->>}[r]^{\ms} & \lbsbd X
}
$$
\end{rem}

The following examples show that, unlike the maps $\il$ and $\is$ in
Theorems~\ref{T:bouq bij 1} and \ref{T:bouq bij 2}, the map $\ms$ in
Theorem~\ref{T:bouq surj} is not necessarily an injection.

\begin{exa}\label{X:bouq surj 1}
Consider the complex conjugate sequences $x=(x_n)$ and $y=(y_n)$ in the
complex plane $\C$ given by $x_n=4^n+2^ni$, $y_n=4^n-2^ni$, $n\in\N$, where
$i=\sqrt{-1}$. Suppose $m\le n$, $m,n\in\N$. The simple estimate
$\sqrt{1+t}\le 1+t$ when $t>0$ yields
$$
d(o,x_m) = \sqrt{2^{4m}+2^{2m}} = 2^{2m}\sqrt{1+2^{-2m}} \le 2^{2m}+1\,,
$$
and similarly
$$ d(x_m,x_n) \le 2^{2n}-2^{2m}+1\,. $$
Thus
$$
2\IP<o,x_n;x_m> \le (2^{2m}+1) + (2^{2n}-2^{2m}+1) - 2^{2n} = 2\,,
$$
and so $x$ is a bouquet sequence. By symmetry, $y$ is also a bouquet
sequence. It is readily verified that $x$ and $y$ are not equivalent as
bouquet sequences, but that they are equivalent as loose bouquet sequences.
\end{exa}

\begin{exa}\label{X:bouq surj 2}
Our second example is a variant of Example~\ref{X:bouq surj 1} with $x_n^t =
4^n+2^n t i$, where $-1 \le t\le 1$ and again $i=\sqrt{-1}$. For each $t$,
$x^t=(x_n^t)$ is a bouquet sequence in the convex subset $X$ of the complex
plane consisting of all $x+yi$ with $0\le x<\infty$ and $y^2\le x$. If $s\ne
t$ then $x^s$ and $x^t$ are inequivalent as bouquet sequences (although they
are loosely equivalent). It follows that $\bsbd X$ has the cardinality of
the continuum even though $\bbd X$ is a singleton set. Thus the map $\ms$ is
far from being injective in this case.
\end{exa}

Although $\bsbd X$ and $\lbsbd X$ are in general different, we now show that
both can be naturally identified with the Gromov boundary $\Gbd X$ of a
Gromov hyperbolic space $X$.

\begin{thm}\label{T:Gromov bij}
Suppose $X$ is an rCAT(0) space. Then every loose bouquet sequence is a
Gromov sequence, and this identity map induces natural maps $\mm:\lbsbd
X\to\Gbd X$ and $\mml=\mm\circ\ms:\bsbd X\to\Gbd X$. If $X$ is Gromov
hyperbolic, then $\mm$ and $\ms$ (and hence $\mml$) are bijective.
\end{thm}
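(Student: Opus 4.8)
The plan is to establish three things in sequence: first that every loose bouquet sequence is a Gromov sequence (so the identity map makes sense); second that this identity map descends to well-defined maps $\mm$ and $\mml$ on the respective boundaries; and third that $\mm$ and $\ms$ are bijective when $X$ is Gromov hyperbolic, from which bijectivity of $\mml=\mm\circ\ms$ is immediate.

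\textbf{Step 1 (loose bouquet sequences are Gromov sequences).}
Let $x=(x_n)$ be a loose $\d$-bouquet sequence with basepoint $o$, so that $(d(o,x_n))$ is increasing and unbounded and $\IP<o,x_n;x_m>\le\d(d(o,x_m))$ for $m\le n$. The identity
$$
2\IP<x_m,x_n;o> = d(o,x_m)+d(o,x_n)-d(x_m,x_n)
$$
combined with $2\IP<o,x_n;x_m>=d(x_m,o)+d(x_m,x_n)-d(o,x_n)$ lets me solve for $d(x_m,x_n)$ and substitute, giving
$$
\IP<x_m,x_n;o> = d(o,x_m) - \IP<o,x_n;x_m> \ge d(o,x_m)-\d(d(o,x_m))
$$
for $m\le n$. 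Since $\d$ is a little-o function, $d(o,x_m)-\d(d(o,x_m))\to\infty$ as $m\to\infty$, and by symmetry of the Gromov product the same lower bound (with $\min$ of the two arguments) forces $\IP<x_m,x_n;o>\to\infty$ as $m,n\to\infty$. Hence $x$ is a Gromov sequence.

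\textbf{Step 2 (the maps are well-defined).}
Using the same computation, I would show that if $x\sim_\rLS y$ then the identity map sends them to equivalent Gromov sequences. The loose asymptoticity hypothesis bounds $\IP<o,x_m;y_n>\mino\IP<o,y_n;x_m>$ by a little-o quantity; converting to $\IP<x_m,y_n;o>$ by the same algebraic identity as above yields a lower bound tending to infinity, so $(x,y)\in E$ and thus $x\sim y$ in $\Gbd X$. This makes $\mm:\lbsbd X\to\Gbd X$ well-defined, and $\mml=\mm\circ\ms$ is well-defined by Theorem~\ref{T:bouq surj}.

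\textbf{Step 3 (bijectivity in the Gromov hyperbolic case).}
This is the heart of the matter, and \textbf{the main obstacle is surjectivity and injectivity of $\mm$}, which is where $\d$-hyperbolicity must be used essentially. For surjectivity of $\mm$, given any Gromov sequence $(z_n)$ I would pass to a subsequence so that $\IP<z_m,z_n;o>$ grows, and check directly that (after thinning) $(z_n)$ is already a loose bouquet sequence: the defining inequality $\IP<o,z_n;z_m>\le\d(d(o,z_m))$ follows from the algebraic identity of Step~1 together with the lower bound on $\IP<z_m,z_n;o>$. For injectivity of $\mm$, suppose $x,y$ are loose bouquet sequences with $x\sim y$ as Gromov sequences; in a $\d$-hyperbolic space the relation $E$ is already transitive (noted in Definition~\ref{D:Gromov seq}), so $(x,y)\in E$, i.e.\ $\IP<x_m,y_n;o>\to\infty$. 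I would translate this back (via Step~1's identity) into the loose asymptoticity bound of Definition~\ref{D:l bouquet seq bdy}; here Lemma~\ref{L:Vais} and the Tripod Lemma~\ref{L:Tripod} control the relationship between Gromov products of tips and distances along the short paths, ensuring the little-o estimate. Once $\mm$ is shown bijective, bijectivity of $\ms$ follows because $\mm$ and $\mml=\mm\circ\ms$ are both bijections, so $\ms=\mm^{-1}\circ\mml$ is a bijection as well; equivalently, combining with Theorem~\ref{T:bouq surj} (which gives surjectivity of $\ms$) one only needs injectivity of $\ms$, which now follows from injectivity of $\mml$. The delicate point throughout is converting the little-o bounds of the bouquet-sequence formalism into the $\to\infty$ conditions of the Gromov formalism and back, keeping track of the additive $\d$-hyperbolicity and short-path error terms supplied by Lemmas~\ref{L:Vais} and~\ref{L:Tripod}.
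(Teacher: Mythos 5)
Your Steps 1 and 2 are correct and are essentially the paper's own argument (both rest on the identity $d(o,x_m)=\IP<x_m,x_n;o>+\IP<o,x_n;x_m>$). Step 3, however, contains two genuine gaps. The first is your surjectivity argument for $\mm$: it is simply false that a Gromov sequence becomes a loose bouquet sequence after thinning. Let $X$ be the metric tree obtained from the ray $[0,\infty)$ by attaching a segment of length $2^n$ at the integer point $n$, with $o=0$ and $z_n$ the tip of the $n$-th segment. For $m<n$ one computes $\IP<z_m,z_n;o>=m\to\infty$, so $(z_n)$ is a Gromov sequence in this ($0$-hyperbolic, CAT(0)) space; but $\IP<o,z_n;z_m>=d(o,z_m)-\IP<z_m,z_n;o>=2^m$, while $d(o,z_m)=m+2^m$, so $\IP<o,z_n;z_m>$ is comparable to $d(o,z_m)$ rather than little-o of it, and every subsequence has the same defect. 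What your argument misses is that a loose bouquet sequence needs a lower bound on $\IP<z_m,z_n;o>$ of the form $d(o,z_m)-\d(d(o,z_m))$, i.e.\ relative to the \emph{distances}, whereas thinning only improves the Gromov products as a function of the \emph{indices}. This is precisely why the paper does not reuse $(z_n)$ itself: it chooses unit speed $1$-short paths $\la_n$ from $o$ to $z_n$, sets $y_n:=\la_n(n)$, shows $(y_n)$ is a Gromov sequence equivalent to $(z_n)$, and then uses the Tripod Lemma (Lemma~\ref{L:Tripod}) to verify that $(y_n)$ is a $c$-bouquet sequence; this new, equivalent sequence is what witnesses surjectivity.

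The second gap is your deduction that $\ms$ is bijective: it is circular. You invoke injectivity of $\mml=\mm\circ\ms$, but since $\mm$ is injective, injectivity of $\mml$ is \emph{equivalent} to injectivity of $\ms$, which is exactly what you are trying to prove; it does not follow from injectivity of $\mm$ together with surjectivity of $\ms$ (Theorem~\ref{T:bouq surj}). Nor is it a formality: injectivity of $\ms$ says that two $c$-bouquet sequences that are loosely asymptotic must be asymptotic with a \emph{constant} bound, and Examples~\ref{X:bouq surj 1} and~\ref{X:bouq surj 2} show this genuinely fails without hyperbolicity. The paper closes this by proving directly that non-asymptotic $c$-bouquet sequences are inequivalent as Gromov sequences (the claim involving the constant $J$, proved via Lemma~\ref{L:Vais} and Lemma~\ref{L:Tripod}); that yields injectivity of $\mml$, hence of both $\ms$ and $\mm$ simultaneously. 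Alternatively, you could upgrade your own Step 2 computation: if $x,y$ are $c$-bouquet sequences with $\IP<x_m,y_n;o>\to\infty$, then two applications of the hyperbolic inequality, combined with $\IP<x_m,x_k;o>\ge d(o,x_m)-c$ for $k\ge m$ (and its analogue for $y$) and the fact that $\IP<x_k,y_l;o>$ can be made arbitrarily large, give $\IP<x_m,y_n;o>\ge \(d(o,x_m)\mino d(o,y_n)\)-c-2\d$ for all $m,n$, where $\d$ is the hyperbolicity constant; hence $x\sim_\rS y$ with constant $c+2\d$. Either way, an argument producing a \emph{constant} bound must be supplied; your proposal as written, which only converts Gromov equivalence into the little-o estimate of Definition~\ref{D:l bouquet seq bdy}, never produces one.
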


\begin{proof}
Suppose $(x_n)$ is a (loose) bouquet sequence. By direct computation,
\begin{equation}\label{E:gp eqn}
d(o,x_m) = \IP<x_m,x_n;o> + \IP<o,x_n;x_m>.
\end{equation}
Taking $n\ge m$ and letting $m\to\infty$, the left-hand side of the above
equation tends to infinity faster than $\IP<o,x_n;x_m>=\d(d(o,x_m))$, and so
$\IP<x_m,x_n;o>$ must tend to infinity as $m,n\to\infty$. Thus $(x_n)$ is a
Gromov sequence.

In a similar way, it follows that any pair of (loosely) asymptotic bouquet
sequences must be equivalent as Gromov sequences. Thus the identity map
gives rise to well-defined natural maps $\mm:\lbsbd X\to\Gbd X$ and
$\mml:\bsbd X\to\Gbd X$. Since $\ms:\bsbd X\to\lbsbd X$ is also induced by
an identity map on bouquet sequences, it follows that $\mml=\mm\circ\ms$. It
remains to prove that these maps are bijective under the added assumption
that $X$ is Gromov hyperbolic.

We already know that $\ms$ is surjective (Theorem~\ref{T:bouq surj}), and we
now prove that $\mm$ is surjective. Given a Gromov sequence $(x_n)$, we thin
it out if necessary to ensure that
$$ \IP<x_m,x_n;o>\ge m\mino n\,, \qquad m,n\in\N\,. $$
Note that a subsequence of a Gromov sequence is always an equivalent Gromov
sequence. In particular, $d(x_n,o)=\IP<x_n,x_n;o>\ge n$, $n\in\N$. We may
also assume that $d(x_n,o)$ is an increasing function of $n$. For each
$n\in\N$, let $\la_n:[0,L_n]\to X$ be a unit speed $1$-short path from $o$
to $x_n$, let $\g_n$ be the initial segment of $\la_n$ of length $n$, and
let $y_n=\g_n(n)$. As before,
$$ d(o,y_n) = \IP<y_n,x_n;o> + \IP<o,x_n;y_n>. $$
Since $\la_n$ is a $1$-short segment, we must have $\IP<o,x_n;y_n>\le 1/2$,
and so $\IP<y_n,x_n;o>\to\infty$ as $n\to\infty$. A repeated application of
hyperbolicity gives
$$
\IP<y_m,y_n;o> \ge
  \IP<y_m,x_m;o>\mino\IP<x_m,x_n;o>\mino\IP<x_n,y_n;o>-2c\,,
$$
so $\IP<y_m,y_n;o>\to\infty$ as $m,n\to\infty$. Thus $(y_n)$ is a Gromov
sequence and it is equivalent to $(x_n)$.

Suppose now that $m,n\in\N$, $m<n$, and let $z_m=\g_n(m)$. Then $d(y_m,o)\le
m\le\IP<x_m,x_n;o>$, so it follows from the Tripod Lemma
(Lemma~\ref{L:Tripod}) that $d(y_m,z_m)\le 4c+2$. Consequently,
\begin{align*}
2\IP<o,y_n;y_m> &= d(o,y_m)+d(y_m,y_n)-d(o,y_n) \\
&\le m + (4c+2+n-m) - (n-1) = 4c+3\,,
\end{align*}
and so $(y_n)$ is a bouquet sequence. Thus $\mm$ is surjective as required.

To prove injectivity of $\mm$ and $\ms$, it suffices to show that if
$x=(x_n)$ and $y=(y_n)$ are non-asymptotic $c$-bouquet sequences, then $x$
are $y$ are not equivalent as Gromov sequences. Since $x$ and $y$ are
non-asymptotic, we can fix indices $M,N\in\N$ such that
\begin{equation}\label{E:non-asymp1}
J:=\IP<o,x_M;y_N> \mino \IP<o,y_N;x_M> > 2c+6\d\,.
\end{equation}
We claim that
$$
\IP<x_m,y_n;0>\le (d(o,x_M)\maxo d(o,y_N))+c+2\d\,,
  \qquad\text{for all } m>M,\,n>N.
$$
Assuming this claim, we see that $x':=(x_{i+M})_{i=1}^\infty$ is not
equivalent to $y':=(y_{i+N})_{i=1}^\infty$. Since $x$ is equivalent to $x'$,
and $y$ to $y'$, it follows that $x$ is not equivalent to $y$, as required.

Let us prove the claim. Suppose $m>M$ and $n>N$. Fixing an arbitrary $h>0$,
we choose a $h$-short path $\g_1:[0,L_1]\to X$ from $o$ to $x_m$
parametrized by arclength. By Lemma~\ref{L:Vais}, $d(x_M,\g_1)\le
c_0:=c+2\d+h$, so let $u_1:=\g_1(t_1)$, $0\le t_1\le L_1$, be such that
$d(x_M,u_1)\le c_0$. Similarly, we choose a $h$-short path $\g_2:[0,L_2]\to
X$ from $o$ to $y_n$ parametrized by arclength, and then there exists
$v_2:=\g_2(t_2)$, $0\le t_2\le L_2$, such that $d(y_N,v_2)\le c_0$. It
follows that
\begin{equation}\label{E:u1v2}
\IP<o,u_1;v_2> \mino \IP<o,v_2;u_1> > J-2c_0\,.
\end{equation}
Without loss of generality, we assume that $t_1\le t_2$. Writing
$u_2=\g_2(t_1)$, we see
$$ d(u_1,v_2)-d(u_1,u_2)\le d(u_2,v_2)\le t_2-t_1\,, $$
and, by the shortness of $\g_2$,
$$ t_2-t_1-h \le d(o,v_2)-d(o,u_2)\,. $$
It follows that $\IP<o,u_2;u_1> > \IP<o,v_2;u_1> - h/2$, and so
\eqref{E:u1v2} implies that if $h>0$ is sufficiently small, then
$$ \IP<o,u_2;u_1> > J-2c_0-h/2 > 2\d+h\,. $$
By $h$-shortness we have
$$ |\IP<o,u_2;u_1>-\IP<o,u_1;u_2>|\le |d(o,u_1)-d(o,u_2)|\le h $$
and so again if $h>0$ is sufficiently small, then
$$ \IP<o,u_1;u_2> > J-2c_0-3h/2 > 2\d+h\,. $$
Thus for $h>0$ sufficiently small, we have
$$ d(u_1,u_2) = \IP<o,u_2;u_1>+\IP<o,u_1;u_2> > 4\d+2h\,. $$
In view of Lemma~\ref{L:Tripod}, we conclude that
$$ \IP<x_m,y_n;o> < d(o,u_1)\le d(0,x_M)+c_0=d(o,x_M)+c+2\d+h\,. $$
Since $h>0$ is arbitrary, the claim follows.
\end{proof}

The following result shows that the natural maps $\mm$ and $\mml$ may fail
to be injective if $X$ is not Gromov hyperbolic. These maps can also fail to
be surjective in complete CAT(0) spaces according to \cite[Theorem~1]{BF2}.

\begin{prop}\label{P:Gb bad1}
Suppose first that $X=\R^n$ for $n>1$ with the Euclidean metric attached.
Then $\bbd X=\lbsbd X$ has the cardinality of the continuum, while $\Gbd X$
is a singleton set. The natural maps $\mm$ and $\mml$ are not injective.
\end{prop}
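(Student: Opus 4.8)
The plan is to establish the three assertions in Proposition~\ref{P:Gb bad1} separately: first that $\Gbd X$ is a singleton, second that $\bbd X = \lbsbd X$ has the cardinality of the continuum, and third (which then follows immediately) that the natural maps $\mm$ and $\mml$ are not injective.

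For the first claim, I would argue directly from the definition of Gromov sequence in $\R^n$. In Euclidean space the Gromov product $\IP<x,y;o>$ is small whenever the angle at $o$ between $x$ and $y$ is bounded away from zero and both points are far from $o$; more precisely a short computation with the cosine rule shows $\IP<x,y;o> \le \len$ of the shorter of the two segments times a factor controlled by the angle. The key point is that \emph{any} two Gromov sequences in $\R^n$ are equivalent under the transitive closure $\sym$: given $x=(x_n)$ and $y=(y_n)$ with $\IP<x_m,x_n;o>,\IP<y_m,y_n;o>\to\infty$, one shows $(x,y)\in E$ by noting that forcing $\IP<x_m,x_n;o>\to\infty$ in $\R^n$ forces the directions $x_n/|x_n|$ to converge, so both sequences have a well-defined limiting direction, and any two sequences can be linked through an intermediate sequence along a great-circle arc of directions. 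Hence $\Gbd X$ collapses to a single equivalence class. (This is exactly the phenomenon flagged in Definition~\ref{D:Gromov seq}, that $E$ is not transitive for non-hyperbolic $X$.)

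For the second claim, I would exhibit a continuum of pairwise non-asymptotic bouquet sequences indexed by directions. For each unit vector $\theta\in S^{n-1}$ set $x^\theta_k = 4^k\theta$; then $x^\theta$ is a $0$-bouquet sequence since $\IP<o,x^\theta_n;x^\theta_m> = 0$ for $m\le n$ (the three points are collinear). For distinct $\theta\ne\theta'$ the angle at $o$ is a fixed positive constant, so $\IP<x^\theta_m,(x^{\theta'})_n;o>$ grows linearly in $\min(4^m,4^n)$ while $d(o,x^\theta_m)\maxo d(o,x^{\theta'}_n)$ grows at the same linear rate; the ratio stays bounded below, so no little-o function can dominate and $x^\theta\not\sim_{\rLS} x^{\theta'}$. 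This gives an injection from $S^{n-1}$ (which has the cardinality of the continuum for $n>1$) into $\lbsbd X$, and the identification $\bbd X = \lbsbd X$ is supplied by Theorems~\ref{T:bouq bij 1} and \ref{T:bouq bij 2} (or Corollary~\ref{C:5 equiv}). One should also check the reverse cardinality bound, but since $\R^n$ is separable the set of bouquet sequences modulo any equivalence has at most the cardinality of the continuum, so this is automatic.

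The third claim is then immediate: $\mm:\lbsbd X\to\Gbd X$ sends a set of continuum cardinality to a singleton, so it is certainly not injective, and since $\ms$ is surjective (Theorem~\ref{T:bouq surj}), $\mml=\mm\circ\ms$ is non-injective as well. \textbf{The main obstacle} I anticipate is the first claim: verifying that \emph{every} pair of Gromov sequences in $\R^n$ lies in the same $\sym$-class requires care, because $E$ itself is not transitive and one must genuinely produce the chain of intermediate sequences realizing transitive closure. The cleanest route is to show that each Gromov sequence determines a unique limiting direction in $S^{n-1}$ and that sequences with the same direction are $E$-related, then connect any two directions by finitely many overlapping sequences; establishing that the direction is well-defined (i.e.\ that $\IP<x_m,x_n;o>\to\infty$ really does force directional convergence) is the technical heart of the argument.
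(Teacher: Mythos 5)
Your argument for the singleton claim rests on two linked assertions: that in $\R^n$ the Gromov product $\IP<x,y;o>$ is small whenever the angle at $o$ between $x$ and $y$ is bounded away from zero, and consequently that $\IP<x_m,x_n;o>\to\infty$ forces the directions $x_n/|x_n|$ to converge. Both assertions are false; this is hyperbolic-space intuition that fails in Euclidean space. If $|x|=|y|=R$ and the angle at $o$ is $\t$, then $\IP<x,y;o>=R(1-\sin(\t/2))$, which grows \emph{linearly} in $R$ for every fixed $\t<\pi$: smallness of the Euclidean Gromov product detects only nearly opposite directions, not nonzero angle. Concretely, $x_n=2^n(\cos\a_n,\sin\a_n)$ with $\a_n=\frac{\pi}{4}+\frac{\pi}{4}\sin(\log n)$ is a Gromov sequence in $\R^2$ — all mutual angles lie in $[0,\pi/2]$, so an elementary estimate gives $\IP<x_m,x_n;o>\ge \frac{2-\sqrt{2}}{2}\,(2^m\mino 2^n)\to\infty$ — yet its directions oscillate forever and never converge. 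So the step you yourself flagged as the technical heart of the argument is not merely delicate; it is wrong as stated.

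The gap is repairable with one extra idea: pass to subsequences. By compactness of $S^{n-1}$, every Gromov sequence has a subsequence with convergent directions; a subsequence of a Gromov sequence is always $E$-related to the original, and a Gromov sequence whose directions converge is $E$-related to the radial sequence in the limiting direction (for large indices the mutual angles are small, so the products again grow linearly in the minimum of the norms). After this reduction, your chaining of directions through intermediate sequences goes through. The paper reaches the same reduction by a different device: it cites \cite[Theorem~2.2]{BK} (surjectivity of $\mm$ for proper geodesic spaces) to conclude that every class in $\Gbd X$ is represented by a radial sequence $(na_t)_{n=1}^\infty$, and then chains radial sequences exactly as you propose, linking opposite directions through a perpendicular one. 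For the cardinality claim your route also differs from the paper's — the paper deduces it from the homeomorphism of $(\ibd X,\ct)$ with the sphere together with Theorem~\ref{T:CAT0 bij}, while you build an explicit continuum of pairwise non-asymptotic radial bouquet sequences — and your construction is essentially correct, but note one slip: loose asymptoticity in Definition~\ref{D:l bouquet seq bdy} is governed by the products $\IP<o,x_m;y_n>$ and $\IP<o,y_n;x_m>$, not by $\IP<x_m,y_n;o>$. The computation you want is that on the diagonal $\IP<o,y_m;x_m>=4^m\sin(\phi/2)$, where $\phi$ is the angle between $\theta$ and $\theta'$ (this also follows from \eqref{E:gp eqn}); since this is not $o(4^m)$, no little-o function can dominate it, and the non-injectivity conclusion then stands.
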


\begin{proof}
Since $X$ is a complete CAT(0) space, $(\ibd X,\ct)$ is homeomorphic to the
sphere $S:=\partial B(0,1)$, and so its cardinality is that of the
continuum. The same is true of $\bbd X=\lbsbd X$ by Theorem~\ref{T:CAT0
bij}. We claim that $\Gbd X$ is a singleton set. Assuming this claim, it is
clear that $\mm$ and $\mml$ cannot be injective.

It remains to justify our claim. Certainly $\Gbd X$ is nonempty because of
the natural map $\mm$. We now appeal to Theorem~2.2 of \cite{BK} which
states that $\mm$ is surjective if $X$ is a proper geodesic space. In fact,
as is clear from the proof of that result, $\mm$ is induced by the map that
takes a geodesic ray $\g:[0,\infty)\to X$ parametrized by arclength to the
Gromov sequence $(\g(t_n))_{n=1}^\infty$, where $(t_n)$ is any sequence of
non-negative numbers with limit infinity. Since the ideal boundary of a
complete CAT(0) space can be viewed as the set of geodesic rays from any
fixed origin, it follows that we get representatives of all points in
$\Gbd\R^2$ by considering only the Gromov sequences
$x^t:=(na_t)_{n=1}^\infty$, where $a_t=(\cos t,\sin t)\in\R^2$, $ t\in\R$. A
straightforward calculation shows that $(x^t,x^s)\in E$ for all pairs $t,s$,
except when $|t-s|$ is an odd multiple of $\pi$, i.e.~except when $x^t$ and
$x^s$ are tending to infinity in opposite directions. But in the exceptional
case, we have $(x^t,x^{t+\pi/2})\in E$ and $(x^{t+\pi/2},x^s)\in E$, so all
Gromov sequences are equivalent, and we have proved our claim.
\end{proof}

Finally, we relate the Gromov and end boundaries. As in
Proposition~\ref{P:Gb bad1}, we view all our varieties of the bouquet
boundary as being the same. If we do not make this identification, then the
second statement of this result should instead state that
$\me=\mf\circ\mm\circ\is\circ\il$, where $\il$ and $\is$ are as in Theorems
\ref{T:bouq bij 1} and \ref{T:bouq bij 2}, respectively.

\begin{prop}\label{P:Gromov end map}
Suppose $X$ is a metric space. Then there is a natural map $\mf$ from $\Gbd
X$ to $\ebd X$. Furthermore $\me=\mf\circ\mm$, with $\me$ as in
Theorem~\ref{T:end} and $\mm$ as in Theorem~\ref{T:Gromov bij}.
\end{prop}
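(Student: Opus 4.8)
The plan is to construct $\mf$ by mimicking the construction of $\me$ in Theorem~\ref{T:end}, replacing bouquets by Gromov sequences. Given a Gromov sequence $x=(x_n)$, taking $m=n$ in Definition~\ref{D:Gromov seq} shows $d(o,x_n)=\IP<x_n,x_n;o>\to\infty$, so $x$ leaves every ball $\bB_m$. First I would prove the analogue of the key claim in Theorem~\ref{T:end}: for each $m\in\N$ there is a component $U_m$ of $X\setminus\bB_m$ and an index $N$ such that $x_n\in U_m$ for all $n\ge N$. Since a large Gromov product $\IP<x_i,x_j;o>$ forces $x_i,x_j$ to be far from $o$ and mutually close relative to $o$, one expects this to pin the tail into a single component; the nested family $(U_m)$ is then an end, and I set $\mf([x]):=(U_m)$. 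Basepoint independence of $\ebd X$ (established before Theorem~\ref{T:end}) lets me fix $o$ throughout.

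The main obstacle is well-definedness, and it splits into two steps. The harder step is the tail-stabilization claim above. For a bouquet $\b=(\b_n)$ this was easy in Theorem~\ref{T:end}: two far-out points lie on a common sub-path $\b_n|_{[t,t']}$ that stays outside $\bB_m$ and so are automatically in one component. For a bare Gromov sequence there are no connecting paths, so instead I must translate ``$\IP<x_i,x_j;o>$ large'' directly into the statement that $x_i$ and $x_j$ lie in the same component of $X\setminus\bB_m$. This is the crux, and it is where one must exploit whatever connectivity the underlying space provides (for a length space, via short paths together with Lemma~\ref{L:Vais}). The second, routine step is invariance under the equivalence $\sym$ of Definition~\ref{D:Gromov seq}: if $x\sym y$, then following the defining chain of $E$-relations one interleaves consecutive $E$-related sequences into single Gromov sequences, each of which carries a unique end by the first step; chaining along then forces the ends produced by $x$ and by $y$ to coincide, so $\mf$ descends to $\Gbd X$.

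Finally I would verify $\me=\mf\circ\mm$ by unwinding the definitions, using the identifications $\is,\il$ as in the remark preceding the statement. Under these identifications $\mm$ sends the class of a bouquet $\b=(\b_n)$ to the class of its tip sequence $x=(\b_n(L_n))$, which is a Gromov sequence by Theorem~\ref{T:Gromov bij}. Then $\mf([x])$ is the end whose $m$-th term is the eventual component of the tips $\b_n(L_n)$, whereas by the proof of Theorem~\ref{T:end} the $m$-th term of $\me([\b])$ is the eventual component of $\b_n(t)$ for $t$ large. These coincide: once $m$ is fixed and $n$ is large, the tip $\b_n(L_n)$ and any point $\b_n(t)$ with $t_0(m)<t\le L_n$ lie on the sub-path $\b_n|_{[t,L_n]}$, which stays outside $\bB_m$ and hence lies in a single component. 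Thus both constructions select the same $U_m$ at every level, giving $\me=\mf\circ\mm$ (equivalently $\me=\mf\circ\mm\circ\is\circ\il$ without the identifications).
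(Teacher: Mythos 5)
Your construction is the same as the paper's: send $[(x_n)]$ to the nested sequence of components of $X\setminus\bB_m$ eventually containing the tail of the sequence, verify invariance under $E$, and obtain $\me=\mf\circ\mm$ because both maps are ultimately read off from tips. Your interleaving argument for $E$-invariance and your argument for $\me=\mf\circ\mm$ are complete and correct, and they fill in precisely the two steps the paper dismisses as ``similar'' and ``easy''. The genuine gap is the step you yourself call the crux: you never prove tail-stabilization, i.e.\ that $\IP<x_i,x_j;o> > m$ forces $x_i$ and $x_j$ into one component of $X\setminus\bB_m$. The missing argument is short and uses exactly the tool you name: in a length space, take an $h$-short path $\la$ from $x_i$ to $x_j$ with $h<2(\IP<x_i,x_j;o>-m)$; Lemma~\ref{L:Vais} gives $\dist(o,\la)\ge\IP<x_i,x_j;o>-h/2>m$, so $\la$ avoids $\bB_m$ and the two points lie in a common path component, hence a common component. (The paper argues the contrapositive: if every path from $x_i$ to $x_j$ met $\bB_m$, each would have length at least $(d(x_i,o)-m)+(d(x_j,o)-m)=d(x_i,x_j)+2(\IP<x_i,x_j;o>-m)$, contradicting the existence of short paths.)

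You should also know that your hesitation about bare metric spaces is not timidity but a correct diagnosis. Both versions of the argument use the length-space property, which the proposition's hypothesis does not grant, and the statement as literally written is false in that generality: for $X=\{0,1,2,\dots\}\subset\R$ with $o=0$, the sequence $x_n=n$ satisfies $\IP<x_m,x_n;o>=m\mino n\to\infty$, so $\Gbd X\ne\emptyset$, yet every point of $X$ is isolated, so all components of every $X\setminus\bB_m$ are singletons, no nested sequence of such components exists, and $\ebd X=\emptyset$; thus no map $\mf:\Gbd X\to\ebd X$ can exist. The paper's own proof silently invokes the length-space property at the words ``and so $d(x_m,x_{f(n)})$ would be larger than\dots'', and since every space to which $\mf$ is applied in this paper is a length space, the slip is harmless there. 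But your write-up, as a proof, stops at announcing that connectivity ``must be exploited''; to be complete it needs either the two-line Lemma~\ref{L:Vais} argument above together with an explicit length-space hypothesis, or a counterexample and a corrected statement.
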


\begin{proof}
Suppose $x=(x_n)$ is a Gromov sequence, and let $o$ be the basepoint for
$\Gbd X$ and $\ebd X$, as usual. Let $f(n)$ be the smallest $k\in\N$ such
that $\IP<x_i,x_j;o> > n$ for all $i,j\ge k$, so that $f$ is a monotonically
increasing sequence with limit infinity (because $x$ is a Gromov sequence).
Note in particular that $d(x_i,o)=\IP<x_i,x_i;o> > n$ when $i\ge f(n)$.

Let $U_n$ be the component of $X\setminus \overline{B(o,n)}$ containing
$x_{f(n)}$. We claim that $(U_n)$ is an end. To show this, it suffices to
show that $x_m\in U_n$ for all $m\ge f(n)$. If this were not true, then any
path from $x_m$ to $x_{f(n)}$ would have to pass through $B(o,n)$, and so
$d(x_m,x_{f(n)})$ would be larger than $(d(x_m,o)-n)+(d(x_{f(n)},o)-n)$,
which would imply that $\IP<x_m,x_{f(n)};o> < n$, in contradiction to our
construction. Thus we have a map from Gromov sequences to ends, and we can
see in a similar fashion that if $x,y$ are two Gromov sequences with
$(x,y)\in E$, then this map takes them to the same end. It follows that this
map induces a natural map $\mf:\Gbd X\to\ebd X$.

The last statement in the theorem follows easy because $\me$ and $\mf$ are
both induced by set containment, and $\mf$ by the tip map (or just the
identity map, if we view the bouquet boundary as being given by the $\lbsbd
X$ variant).
\end{proof}

Note that the $\mf:\Gbd X\to\ebd X$ need not be injective even if $X$ is a
complete CAT(-1) space (and so both CAT(0) and Gromov hyperbolic), as
evidenced by the hyperbolic plane. Also $\mf$ need not be surjective among
complete CAT(-1) spaces \cite[Theorem~2]{BF2}.

In summary, by putting together Corollary~\ref{C:5 equiv},
Theorem~\ref{T:bouq bij 2}, and Remark~\ref{R:bs lbs}, we see that sets of
equivalence classes as listed below lead to seven naturally equivalent
notions of boundary at infinity for an rCAT(0) space $X$, and we call them
all the bouquet boundary. (By ``naturally equivalent'', we mean that there
is a natural bijection.)
\begin{itemize}
\item Asymptotic (bouquets or standard bouquets).
\item Loosely asymptotic (loose bouquets, bouquets, or standard
    bouquets).
\item Loosely asymptotic (bouquet sequences or loose bouquet sequences).
\end{itemize}

Furthermore, we have the following commutative diagram of natural maps
between the various types of boundaries that we have considered:
$$
\xymatrix{
  &&& \bsbd X \ar@{->>}[d]^{\ds{\ms}} \\
  \ibd X\; \ar@{^{(}->}[r]^{\ds{\ii}} &
    \;\bbd X\; \ar@{->}[d]^{\ds{\me}} \ar@{=}[r]^{\ds{\il}} &
    \;\lbbd\; X \ar@{=}[r]^{\ds{\is}} & \;\lbsbd X \ar@{->}[d]^{\ds{\mm}}\\
  & \ebd X \ar@{<-}[rr]_{\ds{\mf}} && \Gbd X\\
}
$$
Here $\ii$ is injective, or bijective if $X$ is complete CAT(0)
(Theorem~\ref{T:CAT0 bij}). For the bijections $\il$ and $\is$, see Theorems
\ref{T:bouq bij 1} and \ref{T:bouq bij 2}, respectively. $\ms$ is
surjective, or bijective if $X$ is Gromov hyperbolic, in which case $\mm$ is
also bijective (Theorems~\ref{T:bouq surj} and \ref{T:Gromov bij}).
Conditions for $\me$ to be surjective or injective are given in
Theorem~\ref{T:end} and Remark~\ref{R:end inj}, respectively.


\section{The bouquet topology}\label{S:top}

In this section, we define a bouquet topology $\bt$ on $\bcl X:=X\cup \bbd
X$ which makes $\bcl X$ into a {\it bordification} of $X$, i.e.~$X$ with its
metric topology is a dense subspace of $(\bcl X,\bt)$.

Throughout this section, we assume implicitly that $X$ is a $C$-rCAT(0)
space and $\bbd X$ is nonempty (and so $X$ is unbounded). The origin $o\in
X$ is fixed but arbitrary. In all cases $D$ and $L_n$ are as defined for
standard bouquets, i.e.~$D(t):=1/(1\maxo(2t))$ for all $t\ge 0$, and
$L_n:=(2C+2)^n$. For convenience, $L_0:=1$.

In order to proceed, we define a version of $X$ that is similar to the
definition of $\bbd X$, i.e.~we view $X$ as a set of equivalence classes of
objects vaguely resembling standard bouquets.

\begin{dfn}\label{D:fl bouquet}
A {\it mother bouquet from $o\in X$ to $x\in X$} is simply a $D$-short unit
speed path $\g:[0,L]\to X$ from $o$ to $x$. It is convenient to define the
associated {\it finite length bouquet $\b:=(\b_n)$ from $o$ to $x$} by
$\b_n=\tg|_{[0,L_n]}$, where $\tg$ is defined by $\tg|_{[0,L]}=\g$ and
$\tg(t)=x$ for all $t>L$. We also call $\b$ the {\it child of $\g$}, and
$\b_n(L_n)$, $n\in\N$, the {\it tips of $\b$}. We write $L_n':=L_n\mino L$,
so that $\b_n|_{[0,L_n']}$ is always a unit speed segment.
\end{dfn}

Note that if $\g:[0,L]\to X$ is a mother bouquet from $o$ to $x$, then
$L-d(o,x)\le D(d(0,x))\le 1$. Thus $\tg(t)=x$ for all $t\ge d(o,x)+1$. We
say that all finite length bouquets from $o$ to $x\in X$ are {\it
destination equivalent} and denote this equivalence class by $i(x)$. Thus
$i:X\to i(X)$ is a bijection.

Fixing an rCAT(0) space $X$, we denote by $\gbq$ the set of all {\it
generalized bouquets from $o$}, meaning the set of all standard and finite
length bouquets from $o$. Identifying $X$ with $i(X)$, we view $\bcl X$ as a
set of equivalence classes of generalized bouquets, where these classes are
defined using destination equivalence for finite length bouquets and
asymptoticity for standard bouquets.

Define the product topological space
$$
P:=\prod_{n=1}^\infty\,
  \prod_{0\le t\le n\vphantom{{}_{\displaystyle A}}} X_n^t
$$
where $X_n^t$ is the closed ball of all $x\in X$ such that $d(o,x)\le
L_n+1$; the $t$-superscript serves only to distinguish between copies of
this ball. Denote by $p_{nt}:P\to X_n^t$ the associated projection maps.

Because the paths $\b_n$ of $\b\in\gbq$ are all of subunit speed, we see
that $\b_n(t)\in X_n^t$ and so $\b$ can naturally be viewed as an element of
$P$. Let $q:\gbq\to\bcl X$ be the quotient map consistent with our
definition of $\bcl X$, i.e.~$q(\b)=q(\b')$ if and only if $\b,\b'$ are
either asymptotic standard bouquets or destination equivalent finite length
bouquets. $P$ induces a subspace topology on $\gbq$, and $\bcl X$ then
receives the quotient topology for $q$.
$$
\xymatrix{ \gbq \ar@{->>}[d]_-q \ar@{^{(}->}[r] & P \ar@{->>}[d]^{p_{nt}}\\
\bcl X & X_n^t }
$$

\begin{dfn}\label{D:bouquet top}
The {\it bouquet topology} $\bt$ on $\bcl X$ is the quotient topology for
$q$. Henceforth, $\bcl X$ and $\bbd X$ are shorthand for $(\bcl X,\bt)$ and
its subspace $(\bbd X,(\bt)_{\bbd X})$. We call $(\bt)_{\bbd X}$ the bouquet
topology on $\bbd X$ and also denote it simply as $\bt$.
\end{dfn}

In order to give alternative, more explicit, definitions of the bouquet
topology, we first define some sets containing elements of $\bcl X$ that are
somehow close to $x\in\bbd X$. In these definitions, which we use throughout
this section, we assume that $r>0$, $n\in\N$, and $0\le t\le L_n$.
\begin{align*}
S'(x,r;n,t) &= \{ y\in \bcl X\mid%
  \forall\,\b\in q^{-1}(x),\b'\in q^{-1}(y):
               d(p_{nt}(\b),p_{nt}(\b')) < r \},\\
S(x,r;n,t) &= \{ y\in \bcl X\mid
  \exists\,\b\in q^{-1}(x),\b'\in q^{-1}(y):
               d(p_{nt}(\b),p_{nt}(\b')) < r \},\\
S_0(x;n,t) &= \{ y\in \bcl X\mid
  \exists\,\b\in q^{-1}(x),\b'\in q^{-1}(y):
               p_{nt}(\b) = p_{nt}(\b') \},\\
I(x;n) & = \bigcap_{\substack{1\le m\le n\\ 0\le t\le L_n}} S_0(x;n,t).
\end{align*}

In Figure~\ref{f:S nbd}, we give a rough illustration of what $S(x,r;n,t)$
looks like in one particular instance. Here $X$ is the Euclidean plane,
$n=2$, and $q(\b)=x$. Let us assume that $y\in S(x,r;2,t)$ with $y=q(\b')$,
and that $t\ge r\ge 2$. The shortness parameter of $D_2:=D(L_2)$ is fairly
small: certainly $D_2\le 1/8$, and $D_2$ is much smaller than this if $C$ is
large, so the constraint $d(\b_2(t),\b_2'(t))<r$ forces $d(o,\b_2'(t))$ to
lie in the interval $[t-r-2D_2,t]$; in particular, $d(o,y)$ cannot be much
less than $t-r$. Thus the distance from the dot representing $\b_2(t)$ to
the boundary arc of $S(x,r;n,t)$ closest to $o$ in the diagram would
typically be slightly larger than $r$. On the other hand, the diameter of
$S(x,r;n,t)\cap\{z\in X\mid d(z,o)=t\}$ is typically larger than this since
we require only that $d(\b_2(t),\b_2'(t))<r$. By our definition, it is not
at first obvious that this puts any upper bound on how far other
representatives of $[\b]$ and $[\b']$ might be from each other. However it
follows from \eqref{E:equiv est} below that if we redefined $\b,\b'$ to be
other members of these respective equivalence classes of generalized
bouquets, then we would still have have $d(\b_2(t),\b_2'(t))<r+10C+8$, which
justifies the fact that $S(x,r;2,t)$ is bounded roughly by rays from the
origin whose distance from $\b_2(t)$ is larger than $r$, but not larger than
$r+10C+8$.

\begin{figure}[ht!]
\begin{tikzpicture}[scale=0.6]
\xdefinecolor{brown}{RGB}{80, 80, 0}
\xdefinecolor{dRed}{RGB}{160, 0, 0}
\clip (-5,-7.0) rectangle (16.0,7.1);
\coordinate (x)  at   (-2:1.7);  \coordinate (o)  at   (0,0);
\coordinate (a)  at    (19:2);   \coordinate (A)  at   (6:4);
\coordinate (b)  at     (0:2);   \coordinate (B)  at (-12:6);
\coordinate (c)  at   (-20:2);   \coordinate (C)  at (-5:10);
\coordinate (am) at (-30:1.4);   \coordinate (Ap) at (28:25);
\shadedraw[thin,top color=red!10, bottom color=red!10] (o) circle (2)
   node[above=1.2cm] {$\pmb{B(o,t)}$};
\shadedraw[blue, thick, top color=blue!10, bottom color=blue!10]
  (am) arc (-30:28:1.4) -- (Ap) arc (28:-30:25.0) -- cycle;
\draw[thin] (o) circle (2);
\node at (20:9.0) {$\pmb{S(x,r;2,t)}$};
\draw[thick,brown,decorate,decoration={snake,amplitude=1.0mm}] (o) -- (a)
-- node[above left, above right=2pt]{$\pmb{\b_1}$} (A);
\draw[thick,decorate,decoration={snake,amplitude=.5mm}] (o) --
  (b) -- node[above right]{$\pmb{\b_2}$} (B);
\draw[thick,dRed,decorate,decoration={snake,amplitude=.2mm}] (o) --
  (c) -- node[above left, near end]{$\pmb{\b_3}$} (C);
\fill[black] (x) circle (3.0pt);
\fill[black] (o) circle (3.0pt) node[left] {$\pmb{o}$};

\end{tikzpicture}
\caption{A basic neighborhood of $x\in\bbd X$}\label{f:S nbd}
\end{figure}

Trivially, $S_0(x;n,t)\subset S(x,r;n,t)$ and $S'(x,r;n,t)\subset
S(x,r;n,t)$. The next two lemmas together show that containments in the
reverse direction (with a change of arguments!) are also possible: these
will be crucial to establishing alternative definitions for $\bt$.

\begin{lem}\label{L:SS0}
For all $R>0$ and $n_0\in\N$, there exist $N\in\N$ and $T>0$ such that
$S(x,R;N,T)\subset I(x;n_0)$ for all $x\in\bbd X$.
\end{lem}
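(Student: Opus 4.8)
The plan is to choose $N$ and $T$ so large that membership in $S(x,R;N,T)$ forces $x$ and $y$ to be represented by standard bouquets that are uniformly close at every scale up to $L_{n_0}$, and then to use the freedom in choosing representatives, together with rough convexity, to modify these two bouquets into a pair that actually meets at each prescribed $(m,t)$.

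First I would fix witnesses. If $y\in S(x,R;N,T)$ there are $\b\in q^{-1}(x)$ and $\b'\in q^{-1}(y)$ with $d(\b_N(T),\b'_N(T))<R$. Both $\b_N$ and $\b'_N$ are $D$-short unit speed paths issuing from the common point $o$, so by Lemma~\ref{L:subseg} their initial subsegments on $[0,T]$ are again $D$-short, and Lemma~\ref{L:rough cx} (applied with common initial point, so that $C'=C$) gives
$$ d(\b_N(t),\b'_N(t))\le \frac{t}{T}\,R+C\,,\qquad 0\le t\le T\,. $$
Choosing $N$ so large that $L_N\ge R\,L_{n_0}$ and then $T=L_N$, the right-hand side is at most $C+1$ for all $t\le L_{n_0}$. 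Combining this with part (iii) of the standard-bouquet condition, which bounds $d(\b_m(t),\b_N(t))$ and $d(\b'_m(t),\b'_N(t))$ by $2C+2$, I obtain a fixed bound $d(\b_m(t),\b'_m(t))\le 5C+5$ valid for all $m\le n_0$ and all $t\le L_m$.

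The core step is then the construction, for a given pair $(m,t)$ with $m\le n_0$ and $t\le L_m$, of representatives $\g\in q^{-1}(x)$ and $\g'\in q^{-1}(y)$ whose $m$-th segments agree at arclength $t$. Here I would build $\g$ and $\g'$ by leaning each bouquet's whole bundle of segments toward a suitable common point $w$ at arclength $t$ situated between $\b_m(t)$ and $\b'_m(t)$; concretely, one attaches $D$-short paths from $o$ through $w$ out to the respective tips $\b_n(L_n)$ and $\b'_n(L_n)$. Lemma~\ref{L:rough cx} guarantees on the one hand that the new segments remain within $2C+2$ of one another (so that $\g,\g'$ are genuine standard bouquets) and on the other hand that they stay within a bounded distance of $\b$ and $\b'$ respectively (so that $\g\sim\b$ and $\g'\sim\b'$, i.e.\ $q(\g)=x$ and $q(\g')=y$). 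By construction $\g_m(t)=w=\g'_m(t)$, whence $y\in S_0(x;m,t)$; since $(m,t)$ was arbitrary, $y\in I(x;n_0)$.

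The main obstacle is precisely this construction: a standard bouquet must consist of $D$-short segments, and $D(s)=1/(1\maxo 2s)$ is tiny at the large scales $L_n$, so one cannot simply force a detour through $w$ without violating shortness. The point that makes it work is quantitative—the bundle may be leaned by essentially its width $2C+2$, which strictly exceeds the convexity floor $C$ produced above, leaving room to place $w$ inside both leaned bundles. Verifying that the leaned continuations to the far tips can be taken $D$-short while passing through $w$ at arclength $t$, and that the relevant constants ($c=2C+2>2C$, $L_n=(2C+2)^n$, and the specific $D$) close, is the delicate bookkeeping at the heart of the proof; once $T$ is taken large this is exactly where the strict inequality $C<2C+2$ is used.
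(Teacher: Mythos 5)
Your opening reduction is fine and parallels the paper's first step: with a common basepoint, Lemma~\ref{L:rough cx} (with $C'=C$) upgrades the single estimate $d(\b_N(T),\b'_N(T))<R$ to $d(\b_N(t),\b'_N(t))\le C+1$ for $t\le L_{n_0}$ once $T\ge RL_{n_0}$ (though you silently assume $\b'_N$ is unit speed on all of $[0,T]$; when $y\in X$ the witness $\b'$ is a finite length bouquet and $\b'_N$ may become constant before $T$, so the convexity lemma does not apply as stated---the paper devotes a separate argument to this case). The genuine gap is your core step. You never actually produce representatives $\g\in q^{-1}(x)$, $\g'\in q^{-1}(y)$ with $\g_m(t)=\g'_m(t)$; you announce a plan (``lean the bundle through a common point $w$'') and explicitly defer the verification that is the whole content of the lemma. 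Moreover the plan as described---attach $D$-short paths from $o$ through $w$ out to the original far tips $\b_n(L_n)$---cannot work. Already in the Euclidean plane, a $D$-short path from $o$ to a tip at distance roughly $L_n$ is trapped in an ellipse of width about $\sqrt{L_nD(L_n)}\le 1$, and forcing it through a fixed point $w$ at arclength $t$ costs excess length of order $\dist(w,[o,\b_n(L_n)])^2/t$, which must be at most $D(L_n)=1/(2L_n)$; hence $\dist\bigl(w,[o,\b_n(L_n)]\bigr)\lesssim\sqrt{t/L_n}\to0$ as $n\to\infty$. So $w$ would have to lie on the limit ray of $\b$, and symmetrically on the limit ray of $\b'$, and for $x\ne y$ those rays meet only at $o$. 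The obstruction is the decay of the short function $D$, and the strict inequality $C<2C+2$ you invoke does not remove it.

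The paper avoids modifying any path at all: it splices. Having improved \emph{both} the within-bouquet bound $d(\b^1_N(t),\b^1_m(t))$ for $m>n_0$ \emph{and} the cross bound $d(\b^1_N(t),\b^2_N(t))$ to $C+1$ on $[0,L_{n_0}]$---the first by applying Lemma~\ref{L:rough cx} to the bouquet inequality at scale $L_N\mino L_m$, not by quoting the raw constant $2C+2$ as you do---it defines a new family $\b^3$ by $\b^3_k:=\b^2_N|_{[0,L_k]}$ for $k\le n_0$ and $\b^3_k:=\b^1_{k+1}|_{[0,L_k]}$ for $k>n_0$. The two refined estimates add up to exactly $2C+2$, so $\b^3$ is a standard bouquet; it is asymptotic to $\b^1$, hence $q(\b^3)=x$, and its initial segments coincide verbatim with restrictions of $\b^2_N$, which yields $y\in S_0(x;n,t)$ for all $n\le n_0$ and $0\le t\le L_n$ simultaneously. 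Note that your bound $5C+5$, coming from the unrefined bouquet constant, is too weak for any such splicing to satisfy the $(2C+2,D)$-bouquet condition: the refinement from $2C+2$ to $C+1$ is not cosmetic but exactly what makes the construction close.
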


\begin{proof}
It suffices to prove the lemma when $R$ is large, so we assume without loss
of generality that $R\ge 1$. Let $s:=(2R+1)L_{n_0}+R+1$. We will show that
the lemma is true for the choice of parameters $(N,T)$ as long as $N$ is so
large that $L_{N-1}\ge s$, and $T\in[s,L_{N-1}]$. Note that $N>n_0+1$
because $L_{N-1}\ge s>L_{n_0}$. Let $\b^1=(\b_n^1)$ be such that
$x=q(\b^1)$.

For $m>n_0$, we apply Lemma~\ref{L:rough cx} to the bouquet inequality
$$ d(\b_N^1(L_N\mino L_m),\b_m^1(L_N\mino L_m))\le 2C+2 $$
to deduce that
\begin{equation}\label{E:1N1m}
d(\b_N^1(t),\b_m^1(t))\le C+1\,, \qquad 0\le t\le L_{N-1}\mino L_{m-1}\,.
\end{equation}
Suppose now that $y\in S(x,R;N,T)$, and let $\b^2=(\b_n^2)$ be a generalized
bouquet such that $y=q(\b^2)$ and
\begin{equation}\label{E:pNT 12}
d(p_{NT}(\b^1),p_{NT}(\b^2))<R.
\end{equation}
We claim that $d(\b_N^1(t),\b_N^2(t))\le C+1$ for $0\le t\le L_{n_0}$.
Assuming this claim and combining it with \eqref{E:1N1m}, we see that for
$m>n_0$ and $0\le t\le L_{n_0}$,
$$
d(\b_m^1(t),\b_N^2(t)) \le d(\b_m^1(t),\b_N^1(t))+d(\b_N^1(t),\b_N^2(t)) \le
2C+2\,.
$$
Thus we can define a new standard bouquet $\b^3$ with $q(\b^3)=x$ by the
equations $\b_k^3=\b_N^2|_{[0,L_k]}$ for all $k\le n_0$ and $\b_k^3 =
\b_{k+1}^1|_{[0,L_k]}$ for all $k>n_0$. Since $\b^3_n=\b^2_n$ for all $n\le
n_0$, if follows that $y\in S_0(x;n,t)$ for all $n\le n_0$, $0\le t\le L_n$,
and the theorem follows.

It remains to justify the claim. Because $T\ge RL_{n_0}$, the claim follows
by applying Lemma~\ref{L:rough cx} to \eqref{E:pNT 12} if $y\in\bbd X$. It
also follows in the same way if $y\in X$ and $\b_N^2|_{[0,T]}$ is a unit
speed path, i.e.~if $L_N'\ge T$, where $L_N'$ is as in Definition~\ref{D:fl
bouquet} for the finite length bouquet $\b^2$.

Suppose therefore that $L_N'<T$. The inequality $d(\b_N^1(T),\b_N^2(T))<R$,
the $1$-shortness of $\b_N^1|_{[0,T]}$, and the triangle inequality together
imply that $L_N'>T-(R+1)$, which in turn implies that
$L_N'\ge(2R+1)L_{n_0}$. Now $\b_N^1|_{[0,L_N']}$ and $\b_N^2|_{[0,L_N']}$
are unit speed paths and $d(\b_N^1(L_N'),\b_N^2(L_N'))<2R+1$. Since
$T-(R+1)\ge (2R+1)L_{n_0}$, the claim follows as before from
Lemma~\ref{L:rough cx}.
\end{proof}

\begin{rem}\label{R:SS0}
We note two aspects of the proof of Lemma~\ref{L:SS0}:
\begin{enumerate}
\item If $(N,T)$ is one particular choice of data for which the proof
    works, then it also works for any $(N',T')$ such that $T'\ge T$ and
    $L_{N'-1}\ge T'$. In particular, $T$ can be taken to be arbitrarily
    large.
\item Suppose $n_0\in\N$ and $R>0$ are fixed. For every sequence $(N_n)$
    of integers and every unbounded sequence $(T_n)$ such that $0<T_n\le
    L_{N_n-1}$, there exists $n\in\N$ such that $S(x,R;N_n,T_n)\subset
    I(x;n_0)$ for all $x\in\bbd X$. For instance, if we write
    $S(x;n):=S(x,1;n,L_{n-1})$, then there exists $n\in\N$ such that
    $S(x;n)\subset I(x;n_0)$.
\end{enumerate}
\end{rem}

\begin{lem}\label{L:SS'} $S_0(x;n,t)\subset S'(x,10C+8;n,t)$ for
all $x\in\bbd X$, $n\in\N$, and $0\le t\le L_n$.
\end{lem}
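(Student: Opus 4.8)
The plan is to reduce the statement to a single \emph{equivalence estimate}, which I would record as \eqref{E:equiv est}: for any two generalized bouquets $\b,\alpha\in\gbq$ with $q(\b)=q(\alpha)$, and recalling that $p_{nt}(\b)=\b_n(t)$,
\begin{equation}\label{E:equiv est}
d(\b_n(t),\alpha_n(t))\le 5C+4\,,\qquad n\in\N,\ 0\le t\le L_n\,.
\end{equation}
Granting \eqref{E:equiv est}, the lemma follows at once. Indeed, if $y\in S_0(x;n,t)$ there are representatives $\b\in q^{-1}(x)$, $\b'\in q^{-1}(y)$ with $\b_n(t)=\b'_n(t)$, so for \emph{arbitrary} $\alpha\in q^{-1}(x)$ and $\alpha'\in q^{-1}(y)$ the triangle inequality and \eqref{E:equiv est} give
$$
d(\alpha_n(t),\alpha'_n(t))\le d(\alpha_n(t),\b_n(t))+d(\b_n(t),\b'_n(t))+d(\b'_n(t),\alpha'_n(t))\le (5C+4)+0+(5C+4)\,,
$$
placing $y$ in $S'(x,10C+8;n,t)$ (the strict inequality demanded by $S'$ is harmless, since \eqref{E:equiv est} is strict on the finite length side below, and in any case only the neighborhood-basis property matters). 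As $q^{-1}(x)$ and $q^{-1}(y)$ each consist of objects of a single type, it remains to prove \eqref{E:equiv est} separately for standard bouquets and for finite length bouquets.

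For standard bouquets (which covers $q^{-1}(x)$ always, and $q^{-1}(y)$ when $y\in\bbd X$), let $\b,\alpha$ be asymptotic standard bouquets from the common origin $o$, so by Definition~\ref{D:bouquet-bdy} there is a constant $K$ with $d(\b_m(s),\alpha_m(s))\le K$ for $0\le s\le L_m$ and all $m$. Fix $n$, $t\le L_n$, and $m\ge n$. Viewing $\b_m$ and $\alpha_m$ as constant-speed paths on $[0,1]$ from $o$ to their tips, I would apply Lemma~\ref{L:rough cx} in its sharpened form valid when $a_1=a_2$ (so that $C'=C$); the hypothesis holds because $D(d(o,\b_m(L_m)))\le 1/(1\maxo d(o,\b_m(L_m)))$. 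Evaluating at arclength $t$ yields
$$
d(\b_m(t),\alpha_m(t))\le \frac{t}{L_m}\,d(\b_m(L_m),\alpha_m(L_m))+C\le \frac{t}{L_m}\,K+C\,.
$$
Combining this with the internal standard-bouquet bounds $d(\b_n(t),\b_m(t))\le 2C+2$ and $d(\alpha_n(t),\alpha_m(t))\le 2C+2$ gives $d(\b_n(t),\alpha_n(t))\le 5C+4+(t/L_m)K$; letting $m\to\infty$, while the left-hand side is independent of $m$, produces \eqref{E:equiv est}.

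For finite length bouquets ($q^{-1}(y)$ when $y\in X$), let $\b,\alpha$ be children of mother bouquets $\g,\g'$ from $o$ to the same point $x'$. When $t$ does not exceed either path length, Lemma~\ref{L:x1x2} with common origin $o$ and common destination $x'$ gives $d(\g(t),\g'(t))\le C+d(x',x')=C$; once $t$ exceeds one of the lengths, the corresponding point is $x'$ while the other lies within $1$ of $x'$, since a $D$-short path to $x'$ has length within $1$ of $d(o,x')$. In either range the separation is at most $C+1$, strictly inside the bound of \eqref{E:equiv est}.

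The substantive step is the standard-bouquet case: the a priori asymptoticity constant $K$ is \emph{not} uniform over the equivalence class, and the whole purpose of \eqref{E:equiv est} is to eliminate it. The mechanism is the $C'=C$ form of rough convexity together with the common origin $o$, which forces the gap between two representatives to contract by the factor $t/L_m$ as one passes to deeper levels $m$; letting $m\to\infty$ collapses the nonuniform term and leaves only the uniform constant $5C+4$. The one routine verification is that the standard short function $D$ meets the shortness hypothesis of Lemma~\ref{L:rough cx} at every level, which I checked above.
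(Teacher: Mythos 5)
Your proposal is correct and follows essentially the same route as the paper: it reduces the lemma to the uniform equivalence estimate \eqref{E:equiv est}, proves it for standard bouquets via Lemma~\ref{L:rough cx} with common origin (so $C'=C$) applied at a deep level $m$ so that the nonuniform asymptoticity constant enters only through the vanishing factor $t/L_m$, combined with the internal bouquet bound $2C+2$, and handles finite length bouquets via Lemma~\ref{L:x1x2} with common endpoints. The paper's proof is organized the same way (its deep index is called $N$ and its finite-length case is phrased slightly more crudely), so there is nothing substantive to add.
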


\begin{proof}
Suppose $\b,\b'\in\gbq$ with $z:=q(\b)=q(\b')\in\bcl X$. The lemma follows
immediately once we show that
\begin{equation}\label{E:equiv est}
d(\b_n(t),\b_m'(t))\le 5C+4\,, \qquad 0\le t\le L_n\mino L_m,\;n,m\in\N\,.
\end{equation}

Suppose first that $z\in X$ and pick $N\in\N$, $N\ge n\maxo m$, such that
$\b_N(L_N)=\b_N'(L_N)=z$. By Lemma~\ref{L:x1x2}, it follows that
$$ d(\b_N(t),\b_N'(t))\le C\,, \qquad\text{for all } 0\le t\le L_N\,. $$
(Note that we cannot use Lemma~\ref{L:rough cx} to get this estimate because
$\b_N$ and $\b_N'$ might not be of equal length.) Thus for $0\le t\le
L_n\mino L_m$ we have
\begin{align*}
d(\b_n(t),\b_m'(t)) &\le
  d(\b_n(t),\b_N(t)) + d(\b_N(t),\b_N'(t)) + d(\b_N'(t),\b_m'(t)) \\
&\le (2C+2) + C + (2C+2) = 5C+4\,.
\end{align*}

For $z\in\bbd X$, we can similarly deduce \eqref{E:equiv est} from the
limiting estimate
$$
\limsup_{N\to\infty} d(\b_N(t),\b_N'(t))\le C\,, \qquad
  0\le t\le L_n\mino L_m\,.
$$
This last estimate follows from Lemma~\ref{L:rough cx} with data
$a_1=a_2=o$, $b_1=\b_N(L_N)$, $b_2=\b_N'(L_N)$, because of the uniformly
boundedness of $d(\b_N(L_N),\b_N'(L_N))$ and the fact that $(L_n\mino
L_m)/L_N\to 0$ as $N\to\infty$.
\end{proof}

In preparation for the next theorem, let us define some sets associated with
any choice of $x\in\bcl X$ and $R\ge 0$. For $x\in \bbd X$, let
\begin{align*}
\cB_0(x) &=  \{S_0(x;n,t)\mid n\in\N,\, 0<t\le L_n\}\,, \\
\cB_{1,R}(x) &=  \{S(x,r;n,t)\mid r>R,\, n\in\N,\, 0<t\le L_n\}\,, \\
\cB_{2,R}(x) &=  \{S'(x,r;n,t)\mid r>R,\, n\in\N,\, 0<t\le L_n\}\,,
\end{align*}
while for $x\in X$, we simply define
$$ \cB_0(x) = \cB_{1,R}(x) = \cB_{2,R}(x) = \{B(x,r)\mid r>0\}\,. $$

\begin{thm}\label{T:bases}
Suppose $X$ is $C$-rCAT(0). Then
\begin{enumerate}
\item For each $R\ge 0$, $\cB_{1,R}(x)$ is a neighborhood basis for
    $(\bcl X,\bt)$ at $x\in \bcl X$, all of whose elements are open.
\item $\cB_0(x)$ is a neighborhood basis at $x\in \bcl X$ for $(\bcl
    X,\bt)$.
\item For each $R\ge 10C+8$, $\cB_{2,R}(x)$ is a neighborhood basis at
    $x\in \bcl X$ for $(\bcl X,\bt)$.
\end{enumerate}
Also $\bcl X$ is a first countable bordification of $X$.
\end{thm}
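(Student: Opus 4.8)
My plan is to make part (a)---that the family $\cB_{1,R}(x)$ of sets $S(x,r;n,t)$ consists of open sets and is a neighborhood basis at each $x\in\bbd X$---the heart of the argument, and to deduce (b), (c), first countability, and the bordification property from it using the containment chain furnished by Lemmas~\ref{L:SS0} and \ref{L:SS'}. The case $x\in X$ is handled separately: there all three families equal $\{B(x,r)\}$, and everything reduces to showing that $\bt$ restricts to the metric topology on $i(X)$, which I treat at the end.

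For the openness in (a), fix $x\in\bbd X$ and observe that $q^{-1}(S(x,r;n,t))$ is the $q$-saturation of $W(x,r;n,t):=\{\gamma\in\gbq : \exists\,\b\in q^{-1}(x),\ d(p_{nt}(\gamma),p_{nt}(\b))<r\}$; the latter is open in $\gbq$ since each $p_{nt}$ is continuous. To see that the saturation is open, I would take $\gamma\in q^{-1}(S(x,r;n,t))$ and produce a product-basic neighborhood constraining a single, deep coordinate $p_{NT}$ (with $T$ large compared with $t$), and then use the convexity Lemma~\ref{L:rough cx} with common basepoint $o$, together with the within-class estimate \eqref{E:equiv est}, to show that every $\delta$ in that neighborhood again has $q(\delta)\in S(x,r;n,t)$. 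The mechanism is that agreement at the deep coordinate $(N,T)$ forces agreement at $(n,t)$ down to a multiple of the rCAT(0) constant, which the freedom in choosing $r$ and the depth absorbs.

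The basis property is where the main difficulty lies. Given a $\bt$-open $U\ni x$, its preimage $q^{-1}(U)$ is open and saturated and contains a standard bouquet $\b^0$ for $x$, hence a product-basic neighborhood of $\b^0$ constraining finitely many coordinates $(n_i,t_i)$; put $n_0:=\max_i n_i$. The obstacle is that these constraining radii may be smaller than the fixed amount $5C+4$ by which representatives of a single boundary point wobble (estimate \eqref{E:equiv est}), so a single-coordinate $S$-set cannot be forced into one product-basic neighborhood directly. Lemma~\ref{L:SS0} is exactly the tool that overcomes this: for any $r>R$ it yields $(N,T)$ with $S(x,r;N,T)\subset I(x;n_0)\subset\bigcap_i S_0(x;n_i,t_i)$, converting deep control into simultaneous control at all shallow coordinates at once, and Lemma~\ref{L:SS'} then places each $S_0(x;n_i,t_i)$ inside $S'(x,10C+8;n_i,t_i)$. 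Reconciling the resulting $10C+8$ bound with the product-neighborhood radii---using that $q^{-1}(U)$ is saturated and so, by \eqref{E:equiv est}, already fat at every coordinate---gives $S(x,r;N,T)\subset U$, so $U$ contains a member of $\cB_{1,R}(x)$. Since each $S(x,r;n,t)$ contains $x$ (equal representatives give coordinate distance $0<r$), (a) follows.

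Parts (b) and (c) then follow by mutual cofinality of the three families: trivially $S_0(x;n,t)\subset S(x,r;n,t)$ and $S'(x,r;n,t)\subset S(x,r;n,t)$, while Lemma~\ref{L:SS0} places a member of $\cB_{1,R}(x)$ inside any $S_0(x;n,t)$, and for $R\ge 10C+8$ Lemma~\ref{L:SS'} gives $S_0(x;n,t)\subset S'(x,r;n,t)$; thus each of $\cB_0(x)$ and $\cB_{2,R}(x)$ refines and is refined by $\cB_{1,R}(x)$, hence is a neighborhood basis. For first countability, Remark~\ref{R:SS0} supplies, for every $n_0$, an $n$ with $S(x;n)=S(x,1;n,L_{n-1})\subset I(x;n_0)$, so the countable family $\{S(x;n):n\in\N\}$ is cofinal in $\cB_0(x)$ and is a countable neighborhood basis at each $x\in\bbd X$, while $\{B(x,1/k)\}$ serves at each $x\in X$. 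Finally, for the bordification claim I would verify that $\bt$ restricts to the metric topology on $i(X)$---using continuity of the $p_{nt}$ in one direction and Lemma~\ref{L:x1x2} to compare finite length bouquets with a common destination in the other---and that $i(X)$ is dense, by noting that for a standard bouquet $\b$ representing $\xi\in\bbd X$ the finite length bouquets to its tips $\b_k(L_k)$ converge coordinatewise in $P$ to $\b$, so that $i(\b_k(L_k))\to\xi$ in $\bt$ by continuity of $q$.
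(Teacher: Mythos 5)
Your high-level plan matches the paper's: Lemma~\ref{L:SS0} is what converts deep control into simultaneous shallow control, Lemma~\ref{L:SS'} gives part (c), Remark~\ref{R:SS0}(b) gives first countability, and $x\in X$ is treated separately. But the two steps you designate as the heart of the argument are exactly the steps the paper does \emph{not} argue this way (it reads the sub-basis/openness statement directly off its definition of $\bt$ and spends its effort on Lemmas~\ref{L:SS0}--\ref{L:SS'} and on the two concrete verifications below), and your proposed arguments for them fail, both for the same quantitative reason: every tool available here (Lemma~\ref{L:rough cx}, the within-class estimate \eqref{E:equiv est}, and the splicing inside the proof of Lemma~\ref{L:SS0}) carries an additive error of size comparable to $C$ that does \emph{not} decay as the depth $(N,T)$ grows, whereas part (a) with $R=0$ concerns sets $S(x,r;n,t)$ with $r$ arbitrarily small. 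In your openness argument, if $\delta$ agrees with $\gamma\in q^{-1}(S(x,r;n,t))$ at a deep coordinate, the splicing produces representatives of $q(\delta)$ and of $y:=q(\gamma)$ that agree exactly at $(n,t)$; but that representative of $y$ is not the witness $\b'$ satisfying $d(\b'_n(t),\b_n(t))<r$, and passing between two representatives of $y$ costs up to $5C+4$ by \eqref{E:equiv est}. The mechanism therefore yields $q(\delta)\in S(x,r+5C+4;n,t)$ and nothing better; there is no ``freedom in choosing $r$'', since it is the given set $S(x,r;n,t)$ whose openness must be proved. The same constant defeats the ``reconciliation'' step for the basis property: the splicing gives $\b^3\in q^{-1}(x)$ and $\b^4\in q^{-1}(y)$ agreeing at all levels $\le n_0$, and saturation does place $\b^3$ in $q^{-1}(U)$; but the product-basic neighborhood you actually have inside $q^{-1}(U)$ is centered at $\b^0$, and $\b^3$ may lie a full $5C+4$ from $\b^0$ at the constrained coordinates, far beyond radii $\epsilon_i$ that $U$ imposes. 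Saturation says $q^{-1}(U)$ contains the whole class of $x$; it does not say $q^{-1}(U)$ contains every generalized bouquet that merely agrees with \emph{some} member of that class at finitely many coordinates, which is what putting $\b^4$ into $q^{-1}(U)$ would require.

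Two further steps are also handled by arguments that would fail, and here the paper's proof shows what is needed instead. For $x\in X$, Lemma~\ref{L:x1x2} only gives $d(u_1,u_2)\le C+d(x_1,x_2)$, so it cannot show that a small ball $B(x,\d)$ lies inside $S(x,r;n,t)$ once $r<C$; the paper fixes a single $(D/2)$-short mother path $\g$ to $x$ and, for each $y\in B(x,\d)$, builds a mother bouquet $\g^y$ by concatenating $\g$ with a short path from $x$ to $y$, checking $D$-shortness via the $1$-Lipschitz property of $D$, which gives $d(\g(s),\g^y(s))<r$ with \emph{no} additive constant. For density, your premise that the children of the tips $\b_k(L_k)$ converge coordinatewise in $P$ to $\b$ is false in general: the bouquet condition gives only $d(\b_k(t),\b_m(t))\le 2C+2$, and nothing forces convergence. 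The paper's observation is both correct and immediate: $\b$ and the child of $\b_n$ take the same value $\b_n(t)$ at the coordinate $(n,t)$, so $\b_n(L_n)\in S_0(x;n,t)\cap X$ directly. In sum, the architecture is right, but as written the proposal proves neither the openness claim in (a) nor the basis property (hence neither (b) nor (c)), nor the bordification statement.
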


\begin{proof}
By definition,
$$ \cB(x):= \{S(x,r;n,t)\mid r>0,\, n\in\N,\, 0<t\le L_n\} $$
forms a neighborhood sub-basis for $\bcl X$ at each $x\in\bcl X$, and all
elements of $\cB(x)$ are open. When $x\in\bbd X$, $\cB(x)=\cB_{1,0}(x)$, so
this is a neighborhood sub-basis, and in fact a neighborhood basis by
Lemma~\ref{L:SS0}. Applying Lemma~\ref{L:SS0} again it is readily deduced
that for all $R>0$, $\cB_{1,R}(x)$ and $\cB_0(x)$ are neighborhood bases at
$x\in\bbd X$. Lemma~\ref{L:SS'} then implies that $\cB_{2,R}(x)$ is a
neighborhood basis at $x\in\bbd X$ whenever $R\ge 10C+8$.

Suppose instead that $x\in X$. We must show that $\cB_0(x)=\{B(x,r)\mid
r>0\}$ is a basis of open neighborhoods at $x\in X$ for $\bt$, or
equivalently that $(\bt)_X$ coincides with the metric topology $\tau$. From
the form of $\cB_0(x)$, it clearly suffices to show that it is a
neighborhood sub-basis at $x\in X$ for $\bt$.

We claim that any ball $B(x,r)$ equals $S(x,r;n,t)$ for any choice of
$t>d(o,x)+r+1$ and $n$ such that $L_n\ge t$. Since $\cB(x)$ is a
neighborhood sub-basis for $\bt$ at $x\in X$, it follows from this claim
that $(\bt)_X$ is at least as fine as $\tau$. To justify our claim, we
suppose that $y\in S(x,r;n,t)$ with $x=q(\b)$, $y=q(\b')$, and
$d(p_{nt}(\b),p_{nt}(\b'))<r$. Because $t>d(o,x)+1$, we must have
$p_{nt}(\b)=x$. Thus
$$ d(o,p_{nt}(\b')) \le d(o,x)+d(x,p_{nt}(\b')) < d(o,x)+r\,. $$
Since $\b_n'|_{[0,L_n']}$ is $1$-short and $t>(d(o,x)+r)+1$, we must have
$p_{nt}(\b')=y$, and so $d(x,y)<r$. Thus $S(x,r;n,t)\subset B(x,r)$. The
reverse containment is proved in a similar fashion.

To prove that conversely $\tau$ is at least as fine as $(\bt)_X$, we show
that for fixed but arbitrary $x\in X$, $0<r<1$, $n\in\N$, and $0<t\le L_n$,
$S(x,r;n,t)$ contains some ball $B(x,\d)$. First pick a $(D/2)$-short unit
speed path of length $L$ from $o$ to $x$, and then let $\b=(\b_k)\in\gbq$ be
its child. Also let $\d:=r\mino[D(d(o,x))/4]$. For $y\in B(x,\d)$, we pick a
unit speed path $\la_y:[0,l_y]\to X$ from $x$ to $y$, with $l_y<\d$, and
then define $\g^y:[0,L+l_y]\to X$ by the formula
$$
\g^y(s) =
\begin{cases}
  \g(s), & 0\le s\le L, \\
  \la_y(s-L), & L<s\le L+l_y.
\end{cases}
$$
Using the $1$-Lipschitz property of $D$, it is readily verified that $\g^y$
is $D$-short, and so $\g^y$ is a mother bouquet from $o$ to $y$. By
construction it is clear that $d(\g(s),\g^y(s))<r$ for all $s>0$. In
particular this last inequality holds for $s=t$, and so $B(x,\d)\subset
S(x,r;n,t)$.

As for first countability, it is clear that there exists a countable
neighborhood base at each $x\in X$, and  a countable neighborhood base at
$x\in\bbd X$ is given by Remark~\ref{R:SS0}(b). To see that $\bcl X$ is a
bordification of $X$, we need to show that the basic neighborhood
$S_0(x;n,t)$ of $x\in\bbd X$ always contains a point of $X$. But this is
easy since $\b_n(L_n)\in S_0(x;n,t)\cap X$ whenever $\b=(\b_n)\in
q^{-1}(x)$.
\end{proof}

We already know that $\bcl X$ {\it as a set} is independent of the basepoint
$o$ (Corollary~\ref{C:std asymp}). We now show that the associated topology
is also independent of $o$.

\begin{thm}\label{T:top-oo'}
The topology $\bt$ is independent of the basepoint $o$.
\end{thm}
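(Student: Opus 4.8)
The plan is to compare the two topologies through their neighbourhood filters, using the explicit bases of Theorem~\ref{T:bases} in place of the quotient definition. Write $\bt$ and $\bt'$ for the bouquet topologies on $\bcl X$ determined by the basepoints $o$ and $o'$; we already know from Corollary~\ref{C:std asymp} that the two bordifications coincide \emph{as sets}, so it makes sense to ask whether $\bt=\bt'$. Since interchanging $o$ and $o'$ is symmetric, it suffices to prove that every $\bt$-open set is $\bt'$-open, i.e.\ that each basic $\bt$-neighbourhood of each point contains a $\bt'$-neighbourhood of that point. For $x\in X$ this is immediate: by Theorem~\ref{T:bases} the family $\{B(x,r)\mid r>0\}$ is a neighbourhood basis at $x$ for \emph{both} topologies, and these metric balls do not refer to the basepoint. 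So the whole problem concentrates at $x\in\bbd X$. The one structural feature I intend to exploit throughout is that, by Theorem~\ref{T:bases}(a), $\cB_{1,R}(x)$ is a neighbourhood basis at $x$ for \emph{every} $R\ge0$; thus basic neighbourhoods are made small by letting $n$ and $t$ grow, never by forcing $r$ to be small, and we are always free to work with $r$ as large as we please.

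Fix $x\in\bbd X$. First I would fix a standard bouquet $\b$ from $o$ and, via Theorem~\ref{T:bouquet-oo'}, a standard bouquet from $o'$ asymptotic to it, both representing $x$; their asymptoticity constant depends only on $C$ and $d(o,o')$. The core is a transfer estimate: given a point $y$ lying in the neighbourhood $S(x,\rho;m,s)$ computed with respect to $o'$, I would produce $o$-representatives of $x$ and $y$ and bound how far apart their $n$-th strands are at parameter $t$. Concretely, take $o'$-representatives witnessing $y\in S(x,\rho;m,s)$, pass to asymptotic $o$-representatives $\b,\eta$ of $x,y$ using Theorem~\ref{T:bouquet-oo'} once more, and replace representatives within each class at the cost of the constants in \eqref{E:equiv est}. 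The two asymptoticities contribute additive constants controlled by $C$ and $d(o,o')$, while the $o'$-tracking bound $\rho$ holds at parameter $s$; applying Lemma~\ref{L:rough cx} with the common origin $o'$ (so that the convexity constant is merely $C$) pushes this bound down from parameter $s$ to parameter $t\le s$ at the price of a factor $t/s$. The upshot is an estimate of the shape
$$
d(\b_n(t),\eta_n(t))\le A+\tfrac{t}{s}\,\rho\,,
$$
where $A=A(C,d(o,o'))$ is independent of $y$, $\rho$, $s$, and $t$.

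Now the conclusion follows by a judicious choice of parameters, and this is exactly where the observation of the first paragraph is decisive. Given a basic $\bt$-neighbourhood of $x$, Theorem~\ref{T:bases}(a) lets me assume it has the form $S(x,r;n,t)$ with $r>A$; the additive constant $A$ coming from the change of basepoint is then harmless because it is absorbed by the slack $r-A>0$. Choosing $s$ large enough that $\tfrac{t}{s}\rho<r-A$ (for any fixed convenient $\rho$ and any matching strand index $m$ with $s\le L_m$), the transfer estimate yields $d(\b_n(t),\eta_n(t))<r$, so $y\in S(x,r;n,t)$; that is, the $o'$-neighbourhood $S(x,\rho;m,s)$ is contained in $S(x,r;n,t)$. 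Hence every basic $\bt$-neighbourhood of $x$ contains a $\bt'$-neighbourhood, every $\bt$-open set is $\bt'$-open, and by symmetry $\bt=\bt'$.

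The main obstacle is that the asymptoticity and rough-convexity steps each introduce additive constants of size roughly $C+d(o,o')$, which at first sight seem to prevent an $o'$-neighbourhood from fitting inside a \emph{small} $\bt$-neighbourhood and thus to threaten the whole statement. The resolution, and the one point that requires care, is that smallness of a bouquet neighbourhood is governed by the indices $n,t\to\infty$ and not by $r\to0$ (Theorem~\ref{T:bases}(a)); keeping $r$ large lets these basepoint-change constants be absorbed, after which only the innocuous multiplicative factor $t/s$ must be driven to zero. The remaining bookkeeping, namely matching strand indices between the two basepoints and checking that the chosen representatives have strands long enough to be evaluated at the relevant parameters, is routine given Lemma~\ref{L:rough cx} and \eqref{E:equiv est}.
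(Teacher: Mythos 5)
Your overall strategy is sound, and it shares the pivotal observation of the paper's own proof: by Theorem~\ref{T:bases}(a) the families $\cB_{1,R}(x)$ are neighbourhood bases for \emph{every} $R\ge 0$, so smallness of neighbourhoods is governed by $n,t\to\infty$ and the additive constants of size roughly $C+d(o,o')$ produced by a change of basepoint can simply be absorbed into a large $r$. The execution, however, differs from the paper's. The paper keeps the \emph{same} parameters $(n,t)$ on both sides and proves an inclusion of the form $S^{o}(x,r;n,t)\subset S^{o'}(x,r+R;n,t)$ with $R=6C+4+2d(o,o')$; its geometric engine is Remark~\ref{R:o1o2}, applied to the specific structure of the bouquet built in Theorem~\ref{T:bouquet-oo'} (whose strands are initial segments of short paths from $o'$ to tips of the $o$-bouquet), so that strands from $o$ and from $o'$ can be compared at \emph{equal} arc-length parameters. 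You instead pass to much larger parameters $(m,s)$, treat Theorem~\ref{T:bouquet-oo'} as a black box supplying uniform asymptoticity constants, and use the common-origin case of Lemma~\ref{L:rough cx} to decay the witness bound $\rho$ by the factor $t/s$. For $y\in\bbd X$ your transfer estimate $d(\b_n(t),\eta_n(t))\le A+(t/s)\rho$ is correct, and your parameter choice closes the argument. One small caveat: the uniformity of the asymptoticity constant in $C$ and $d(o,o')$ is not part of the \emph{statement} of Theorem~\ref{T:bouquet-oo'}; like the paper, you must extract it from that theorem's proof (and check it survives the subsequence/pruning needed to make the output a standard bouquet).

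The genuine gap is the case $y\in X$. The set $S^{o'}(x,\rho;m,s)$ contains points of $X$ --- indeed $X$ is dense in $\bcl X$, so these are the generic points of any neighbourhood --- and for such $y$ the $q$-preimages are \emph{finite length} bouquets (children of mother bouquets), not standard bouquets. Your step ``pass to asymptotic $o$-representatives of $x,y$ using Theorem~\ref{T:bouquet-oo'} once more'' is meaningless here: that theorem converts a bouquet into a bouquet, and in any case there is no freedom, since an $o$-representative of $y\in X$ is forced to be (the child of) a $D$-short path from $o$ to $y$. In addition, the convexity push-down is not directly applicable when the witness strand for $y$ has a constant tail inside $[0,s]$ (mother path from $o'$ to $y$ shorter than $s$), because the proportional reparametrization of such a strand is not a constant-speed short path, which is what Lemma~\ref{L:rough cx} requires. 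Both defects are repairable with tools already in the paper: compare the mother paths from $o$ and from $o'$ to the common endpoint $y$ either via Remark~\ref{R:o1o2} or via the case $b_1=b_2$ of Lemma~\ref{L:rough cx}, and correct for the length discrepancies (bounded in terms of $d(o,o')$ and $\rho$) using shortness. But as written your proof covers only $y\in\bbd X$; dismissing the rest as ``routine bookkeeping about strand lengths'' hides a case that needs a different lemma, which is precisely why the paper's proof devotes a separate paragraph to $y\in X$.
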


\begin{proof}
Suppose $o,o'\in X$ are two basepoints in $X$. In view of the definition of
the neighborhood bases given in Theorem~\ref{T:bases}, it suffices to show
that the topology with respect to these two basepoints is the same in the
vicinity of each $x\in\bbd X$. By symmetry of $o,o'$, it therefore suffices
to exhibit a neighborhood basis at $x$ with basepoint $o'$ such that every
element of this basis contains some neighborhood of $x$ with respect to the
basepoint $o$. To facilitate this comparison, we add $\omega\in\{o,o'\}$ as
a superscript to our notation, writing $\bt^\omega$, $S^\omega(x,r;n,t)$,
$\cB_{1,R}^\omega(x)$, etc. In view of Theorem~\ref{T:bouquet-oo'}, we can
write $\bbd X$ and $\bcl X$ without such a superscript; however we should
write the quotient maps as $q^\omega:\Gbq{\omega}\to\bcl X$.

By Theorem~\ref{T:bases}, $\cB_{1,R}^{o'}(x)$ is neighborhood basis for
$(\bcl X,\bt^{o'})$ for any given $R\ge 0$. Choosing $R:=6C+4+2d(o,o')$, a
general element of $\cB_{1,R}^{o'}(x)$ has the form $S^{o'}(x,r';n,t)$ for
some $r'>R$, $n\in\N$, and $0<t\le L_n$. We claim that such a general basis
element contains the neighborhood $S^o(x,r;n,t)$, where $r=r'-R$.

Suppose $y\in S^{o}(x,r;n,t)$ for some $r>0$, $n\in\N$, $0<t\le L_n$. Thus
for $z\in\{x,y\}$, there are generalized bouquets $\b^{z,1} =
(\b_m^{z,1})_{m=1}^\infty$ from $o$ such that $q^o(\b^{z,1})=z$ and
$d(\b_n^{x,1}(t), \b_n^{y,1}(t))<r$.

Let us first examine the construction in the proof of
Theorem~\ref{T:bouquet-oo'}, where $c=c'=2C+2$ and we reserve $r,n,t$ to
have their specific meanings in the context of $S^{o}(x,r;n,t)$. Stripped of
its fine details, the construction of a standard bouquet
$\b^{x,2}=(\b_m^{x,2})_{m=1}^\infty$ from $o'$ that is asymptotic to
$\b^{x,1}$ in the proof of Theorem~\ref{T:bouquet-oo'} is as follows: first
we take a sequence of sufficiently short paths whose $m$th entry is a path
from $o'$ to $x_m:=\b_m^{x,1}(L_m)$ (we can use $x_m$ as the final point
rather than some intermediate point $y_m$ as in the original proof because
in this section $D(t):=1/(1\maxo(2t))$: see Remark~\ref{R:bouquet-oo'}),
then we take a subsequence of this sequence, and finally we suitably prune
this subsequence. In particular there exists $N\ge n$ such that $\b_n^{x,2}$
is an initial segment of a sufficiently short path $\la_N:[0,M_N]\to X$ from
$o'$ to $x_N$ that is parametrized by arclength. Writing $u=\b_n^{x,1}(t)$,
$v=\b_N^{x,1}(t)$, we have $d(u,v)\le 2C+2$ by one of the defining
conditions for standard bouquets. Next letting $w:=\b_N^{x,2}(t)$, it
follows from Remark~\ref{R:o1o2} that $d(v,w)\le C+d(o,o')$, and so
$d(u,w)\le 3C+2+d(o,o')$.

Suppose $y\in\bbd X$. Applying the argument of the previous paragraph to $y$
in place of $x$, we get a standard bouquet
$\b^{y,2}=(\b_m^{y,2})_{m=1}^\infty$ from $o'$ that is asymptotic to
$\b^{y,1}$ such that $d(u',w')\le 3C+2+d(o,o')$ where $u'=\b_n^{y,1}(t)$ and
$w':=\b_N^{y,2}(t)$. Since $d(u,u')<r$, we conclude that
$$ d(w,w') \le d(w,u) + d(u,u') + d(u',w') < r+R\,, $$
and so $y\in S^{o'}(x,r+R;n,t)$ as claimed.

Suppose instead that $y\in X$. Let $\b^{y,2}$ be the child of a mother
bouquet from $o'$ to $y$ of length $L$, let $L'$ be length of $\b_n^{y,1}$,
and let $u'=\b_n^{y,1}(t)$, $w':=\b_N^{y,2}(t)$. By Remark~\ref{R:o1o2}, we
see that $d(u',w')\le C+d(o,o')$ if $t<L\mino L'$, and otherwise shortness
gives $d(u',w')\le 1+d(o,o')$. As before
$$ d(w,w') \le d(w,u)+d(u,u')+d(u',w') < r+4C+3+2d(o,o') < r+R\,, $$
and so again $y\in S^{o'}(x,r+R;n,t)$. Thus our claim follows and the proof
is done.
\end{proof}

We are now ready to prove Theorem~\ref{T:Hausdorff intro}.

\begin{proof}[Proof of Theorem~\ref{T:Hausdorff intro}]
First countability was proved in Theorem~\ref{T:bases}. To prove that $\bcl
X$ is Hausdorff, we suppose $x,y\in\bcl X$ are distinct. If one or both of
$x,y$ lie in $X$, then Theorem~\ref{T:bases} implies that they have disjoint
neighborhoods: for instance if $x\in\bbd X$ and $y\in X$ then $B(y,1)$ and
$S(x,1,n,t)$ are disjoint whenever $t>|y|+2$ and $L_n\ge t$ (as in the proof
that $(\bt)_X$ is at least as fine as $\tau$ in the proof of
Theorem~\ref{T:bases}). It therefore suffices to consider the case where
$x,y\in\bbd X$, and so $x=q(\b^x)$, $y=q(\b^y)$, where $\b^x,\b^y$ are
non-asymptotic standard bouquets from $o$. Since $\b^x,\b^y$ are not
asymptotic, we can find $n\in\N$ so large that
$d(\b_n^x(L_n),\b_n^y(L_n))\ge 15C+14$. Letting $U=S(x,1;n,L_n)$ and
$V=S(y,1;n,L_n)$, it follows readily from \eqref{E:equiv est} that $U$ and
$V$ are disjoint neighborhoods of $x$ and $y$ in $\bcl X$, and so $\bcl X$
is Hausdorff.

We claim that $\gbq$ is a closed subset of $P$. Convergence in the product
space $P$ corresponds to pointwise convergence in $\gbq$ (meaning
convergence for each choice of $n,t$), so justifying this claim requires us
to show that a pointwise limit of a sequence of generalized bouquets is a
generalized bouquet. The important step is to note that if for some fixed
$n\in\N$ and all $m\in\N$, $\b_{nm}$ is a path of subunit speed and length
at most $L_n$ from $o$ to $x_m$ (where $L_n$ is defined as always for
generalized bouquets), and if $\b_{nm}(t)$ is pointwise convergent for all
$0\le t\le L_n$, then each of these paths lies in the metric space $X$, so
we may apply the Arzel\`a-Ascoli theorem to deduce that $\b_{nm}$ converge
uniformly to some limiting path $\b_n:[0,L_n]\to X$ of subunit speed. Since
the short function $D$ is continuous, $\b_n$ is $D$-short if each $\b_{nm}$
is $D$-short. It readily follows that a pointwise convergent sequence of
mother bouquets converges uniformly to a mother bouquet, and that a
pointwise convergent sequence of standard bouquets converges to a standard
bouquet. The claim follows.

Suppose next that $X$ is proper. Then $P$ is a product of compact spaces and
so compact. Compactness is inherited by closed subspaces and by quotients
so, applying the above claim, we see that $\bcl X$ is compact. Using
Theorem~\ref{T:bases}, we see that $X=\bigcup_{x\in X} B(x,1)$ is open in
$\bcl X$, and so $\bbd X$ is closed in $\bcl X$. Thus $\bbd X$ is also
compact.
\end{proof}

\begin{proof}[Proof of Theorem~\ref{T:CAT0 main}]
By Proposition~\ref{P:cat0 is rcat0}, $X$ is $C$-rCAT(0) for $C:=2+\sqrt 3$.
By Theorem~\ref{T:CAT0 bij}, we can identify $\icl X$ and $\bcl X$ as sets.
The neighborhood bases for the bouquet topology $\bt$ given in
Theorem~\ref{T:bases} and for the cone topology $\ct$ in
Definition~\ref{D:cone top} coincide at each $x\in X$, so it suffices to
consider the two neighborhood bases at points $x\in \ibd X=\bbd X$.

According to Theorem~\ref{T:geod ray}, there exists a (unique) unit speed
geodesic ray $\g^z:[0,\infty)\to X$ from $o$ that is asymptotic to any given
standard bouquet $\b\in q^{-1}(z)$. We view $\g^z$ as an element of $\gbq$
by identifying it with the standard bouquet $\b^z=(\b_n^z)$, where
$\b_n^z:=\g^z|_{[0,L_n]}$. For $z\in X$, let $\g^z$ be the unique unit speed
geodesic segment $\g^z:[0,d(o,z)]\to X$ from $o$ to $z$, and identify $\g^z$
with its child $\b^z=(\b_n^z)$. In this way the set of these (unique) unit
speed segments or rays from $o$ to all $z\in\bcl X$ is identified with a
subset $\Gbq{*}$ of $\gbq$ and $q':=q|_{\Gbq{*}}:\Gbq{*}\to\bcl X$ is
bijective, so we identify $\bcl X=\icl X$ with $\Gbq{*}$. Note that
$p_{nt}((q')^{-1}(z))$ is independent of $n$: in fact it equals $\g^z(t)$
(or simply $z$ if $z\in X$ and $t>d(o,z)$).

Viewing $\ibd X$ in this manner, it follows from Definition~\ref{D:cone top}
that the basic neighborhood $U(x,r,t)$ for the cone topology $\ct$ at
$x\in\ibd X$ is contained in $S(x,r;n,t)\in\cB_{1,0}(x)$. On the other hand,
it follows from \eqref{E:equiv est} that $S(x,r;n,t)\subset U(x,r+10C+8,t)$.
But the collection of sets $U(x,r',t)$ for all $t>0$ and $r'>10C+8$ forms an
open basis for $\ct$ at $x$: this follows readily from the containment
$$ U(x,10C+9,t(10C+9)/r)\subset U(x,r,t)\,,\qquad 0<r<1,\;\; 0<t\,, $$
which in turn follows from the CAT(0) condition.
\end{proof}

\begin{proof}[Proof of Theorem~\ref{T:Gromov main}]
It suffices to compare the neighborhood bases at $x\in\Gbd X$. We assume
that $\d>0$ is such that $X$ is $\d$-hyperbolic, and so $X$ is also
$C$-rCAT(0) for $C:=2+4\d$ by Proposition~\ref{P:Gh is rcat0}. The
identification of a standard bouquet with the Gromov sequence of its tips
induces an identification of $\bcl X$ and $\Gcl X$ as sets; see
Remark~\ref{R:tip} and Theorem~\ref{T:Gromov bij}.

We take as a $\Gt$-neighborhood basis at $x$ the standard one given by
Definition~\ref{D:Gromov top}, namely $\{V(x,R)\mid R>0\}$. Fixing $R$, we
claim that
\begin{align*}
S(x,1;N,t)\subset V(x,R) \quad \text{ whenever } &N\in\N,\; t>0
  \text{ are so large that } \\
  &\qquad L_N\ge t>R+(4C+7)/2\,.
\end{align*}
To prove this claim, we assume that $y\in S(x,1;N,t)$ for such a choice of
$N$ and $t$, and separately show that $y\in V(x,R)$ when $y\in \bbd X$ and
when $y\in X$.

Suppose first that $y\in\bbd X$. We take the Gromov sequences of tips
$(x_n)$ of $\b^x=(\b_n^x)$, and $(y_n)$ of $\b^y=(\b_n^y)$, where
$\b^x,\b^y$ are standard bouquets such that $q(\b^x)=x$, $q(\b^y)=y$, and
$d(p_{Nt}(\b^x),p_{Nt}(\b^y))<1$. For every $m,n\ge N$, we have
\begin{align*}
d(x_m,y_n) &= d(\b_m^x(L_m),\b_n^y(L_n)) \\
  &\le d(\b_m^x(L_m),\b_m^x(t)) + d(\b_m^x(t),\b_N^x(t))+ \\
  &\qquad + d(\b_N^x(t),\b_N^y(t)) + d(\b_N^y(t),\b_n^y(t)) +
    d(\b_n^y(t),\b_n^y(L_n)) \\
  &\le (L_m-t)+(2C+2)+1+(2C+2)+(L_n-t) \\
  &= (L_m+L_n-2t) + 4C+5.
\end{align*}
But $d(o,x_m)\ge L_m-1$ and $d(o,y_n)\ge L_n-1$, so
$$ \IP<x_m,y_n;o> \ge t-(4C+7)/2 > R\,. $$
Thus $S(x,1;N,t)\cap\bbd X\subset V(x,R)$.

The proof for $y\in X$ is mostly similar, so we mention only the
differences. First, let $y_n=\b_n^y(L_n)$, where $\b^y$ is a finite length
bouquet from $o$ to $y$. Because $d(\b_N^x(t),\b_N^y)<1$ and $\b_N^x$ is
$1$-short, it follows that $L_n'\ge t-2$ for $n\ge N$. Thus either $L_n'\ge
t$ and we deduce as before that $d(x_m,y_n) \le (L_m+L_n'-2t) + 4C+5$, or
$L_n'<t$. In the latter case we have $\b_n^y(t)=\b_n^y(L_n')=y$ for $n\ge N$
and so
\begin{align*}
d(x_m,y_n) &= d(\b_m^x(L_m),\b_n^y(L_n')) \\
  &\le d(\b_m^x(L_m),\b_m^x(t)) + d(\b_m^x(t),\b_N^x(t))+ \\
  &\qquad + d(\b_N^x(t),\b_N^y(t)) + d(\b_N^y(t),\b_n^y(L_n')) \\
  &\le (L_m-t)+(2C+2)+1+0 \\
  &= (L_m-t) + 2C + 3 \le (L_m+L_n'-2t+2) + 2C + 3
\end{align*}
and so we again have $d(x_m,y_n) \le (L_m+L_n'-2t) + 4C+5$. Since also
$d(o,y_n)\ge L_n'-1$, we deduce as before that
$$ \IP<x_m,y_n;o> \ge t-(4C+7)/2 > R\,. $$
But $y_n=y$ for all sufficiently large $n$, so
$$ \IP<x_m,y;o> \ge t-(4C+7)/2 > R\,. $$
Hence $S(x,1;N,t)\cap X\subset V(x,R)$. Thus the claim follows and so $\bt$
is finer than $\Gt$.

It remains to show conversely that $\Gt$ is finer than $\bt$. To show this,
we take $\{S(x,r;n,L_{n-1})\mid n\in\N,\,r>4\d+2\}$ as a neighborhood basis
for $\bt$ at $x\in\bbd X$; see Theorem~\ref{T:bases} and
Remark~\ref{R:SS0}(b). It suffices to show that
$$
V(x,R)\subset S(x,r;n,L_{n-1})\,, \qquad
  n\in\N,\; R\ge L_{n-1},\; r>4\d+2\,.
$$

Consider first $y\in V(x,R)\cap\bbd X$. By eliminating some initial elements
if necessary from the sequences given by Definition~\ref{D:Gromov top}, we
may assume that $(a_j)$ and $(b_j)$ are Gromov sequences such that
$[(a_j)]=x$, $[(b_j)]=y$, $\IP<a_j,b_k;o>\ge R$ for all $j,k\in\N$, and
$$
\IP<a_j,a_k;o>\mino \IP<b_j,b_k;o> \ge L_j\mino L_k\,, \qquad j,k\in\N\,.
$$
The last inequality implies in particular that $d(o,a_j)\mino d(o,b_j)\ge
L_j$, $j\in\N$.

For each $j\in\N$, let $\b_j^x,\b_j^y$ be the initial segments of length
$L_j$ of $D$-short paths from $o$ to $a_j,b_j$, respectively, and let
$a_j'=\b_j^x(L_j)$, $b_j'=\b_j^y(L_j)$ be the associated tips. The Tripod
Lemma (Lemma~\ref{L:Tripod}) tells us that $\b_j^x$ and $\b_j^y$ are
$(4\d+2,D)$-bouquets and that $d(\b_j^x(s),\b_k^y(s))\le 4\d+2$ for all
$j,k\in\N$, $s\le R\mino L_j\mino L_k$. This last estimate remains true
after taking pruned subsequences of $(\b_j^x)$ and $(\b_j^y)$, which we do
if necessary in order to get standard bouquets. We may thus assume without
loss of generality that $\b^x,\b^y$ are standard bouquets satisfying
$q(\b^x)=x$, $q(\b^y)=y$, and $d(\b_j^x(s),\b_k^y(s))\le 4\d+2$ for all
$s\in[0,R\mino L_j\mino L_k]$ and $j,k\in\N$. In particular,
$d(\b_n^x(t),\b_n^y(t))\le 4\d+2$ for $t=L_{n-1}$, and so $y\in
S(x,r;n,L_{n-1})$ for every $r>4\d+2$.

The analysis for $y\in V(x,R)\cap X$ is fairly similar. First we choose a
Gromov sequence $(a_n)$ such that $[(a_j)]=x$, $\IP<a_j,y;o>\ge R$, and
$\IP<a_j,a_k;o>\ge L_j\mino L_k$ for all $j,k\in\N$. In particular,
$d(o,y)\ge R$ and $d(o,a_j)\ge L_j$ for all $j\in\N$. Define $(\b_j^x)$ as
before, and let $\b^y=(\b_j^y)_{j=1}^\infty$ be a finite length bouquet from
$o$ to $y$. Then $(\b_x^n)$ is a $(4\d+2,D)$-bouquet and
$d(\b_j^x(s),\b_k^y(s))\le 4\d+2$ for all $s\in[0,R\mino L_j\mino L_k]$. In
particular, $y\in S(x,r;n,L_{n-1})$ for every $r>4\d+2$.
\end{proof}


\end{document}